\newtheorem{thm}{Theorem}[section]
\newtheorem{lem}[thm]{Lemma}
\newtheorem{prop}[thm]{Proposition}
\theoremstyle{definition}
\newtheorem{defn}[thm]{Definition}
\theoremstyle{remark}
\newtheorem{rem}[thm]{Remark}
\newtheorem*{ex}{Example}
\numberwithin{equation}{section}
\newcommand{\BibTeX}{B\kern-0.1emi\kern-0.017emb\kern-0.15em\TeX}
\newcommand{\XYpic}{$\mathrm{X\kern-0.3em\raisebox{-0.18em}{Y}}$-$\mathrm{pic}\,$}
\newcommand{\cl}{C \kern -0.1em \ell}  
\newcommand{\ed}{\end{document}}
\newcommand{\rev}[1]{\textcolor{red}{#1}}
\begin{document}
\title[Fractional  Sobolev Spaces  with   Kernel  Function ]
 {Fractional Sobolev Spaces  with   Kernel  \\Function on  Compact Riemannian \\ Manifolds}
\author[A. Aberqi]{Ahmed Aberqi}
\address{%
Laboratory LAMA, National School of Applied Sciences\\
Sidi Mohamed Ben Abdellah University\\
BP 1796, 30000 Fez\\
 Morocco.}
\email{ahmed.aberqi@usmba.ac.ma}
\thanks{Corresponding author: D.D. Repov\v{s}}
\author[A. Ouaziz]{Abdesslam Ouaziz}
\address{%
Laboratory LAMA, Faculty of Sciences Dhar Elmahraz\\
Sidi Mohamed Ben Abdellah University\\
BP 1796, 30000 Fez\\
 Morocco.}
\email{abdesslam.ouaziz@usmba.ac.ma}
\author[D.D. Repov\v{s}]{Du\v{s}an D. Repov\v{s}}
\address{%
Faculty of Education and Faculty of Mathematics and Physics, \\
University of Ljubljana \\
 \&
Institute of Mathematics, Physics and Mechanics, \\ 1000 Ljubljana \\ Slovenia.}
\email{dusan.repovs@guest.arnes.si}
\subjclass{Primary 58J10, Secondary 58J20, 35J66}
\keywords{Nonlinear elliptic problem,  Fractional Sobolev space,  Kernel function, Lévy-integrability condition, Compact Riemannian manifold,   Existence of solutions, Topological degree theory.}
\date{}
\begin{abstract}
In this paper, a new class of Sobolev spaces with kernel function satisfying a Lévy-integrability type condition on compact Riemannian manifolds is presented. We establish the properties of separability, reflexivity, and completeness. An embedding result is also proved. As an application, we prove the existence of solutions for a nonlocal elliptic problem involving the fractional $p(\cdot, \cdot)$-Laplacian operator. As one of the main tools,  topological degree theory is applied. 
\end{abstract}
\label{page:firstblob}
\maketitle
\section{Introduction}\label{s1} 
Let $(\mathcal{M}, {g})$ be a compact Riemannian manifold of dimension N. The purpose of this paper is to present fundamental properties of a new class of Sobolev spaces with general kernel on $(\mathcal{M}, {g})$. Additionally, we shall solve the following equation
 \begin{eqnarray}\label{k1}
\begin{gathered}
\left\{\begin{array}{llll}
(\mathcal{L}_{{g}}  ^{K})_{p(y, \cdot)} w(y)&=\lambda   \beta(y)|w(y)|^{r(y)-2}w(y)+  f(y, w(y)) & \text { in } & \mathcal{U}, \\ 
\hspace{2cm}  w&=0 & \text { in }&  \mathcal{M} \backslash \mathcal{U}.
\end{array}\right.
\end{gathered}
\end{eqnarray}
Here, $s\in(0,1)$  is fixed,   $r\in C( \mathcal{U},$  $(1, \infty))$, $\lambda>0 $, $\mathcal{U} \subset  \mathcal{M}$  is an open bounded  subset  of $\mathcal{M}$, $\beta $ is a suitable potential  function in $\mathbb{R}^{+}$  with $\beta \in L^{\infty}(\mathcal{U})$,     $p\in \mathcal{C}( \mathcal{U}  \times  \mathcal{U}, (1, \infty))$ satisfies the following conditions

 \begin{equation}\label{l3}
 p(z,{a})=p({a}, z), \  \text { for every } (z,{a}) \in  \mathcal{M}^{2},
 \end{equation} 
 \begin{equation}\label{l30}
 1< p^{-}=\min_{(y, z)\in\mathcal{M}^{ 2}} p(y, z)\leq p(y, z)<p^{+} = \sup_{(y, z)\in\mathcal{M} ^{2}} p(y, z),
 \end{equation} 
and  $f:\mathcal{M} \times \mathbb{R} \to  \mathbb{R}$ is a Carathéodory function such that:

 $(\mathcal{B}_{1})\label{b1}$ There exist $\alpha>0$ and a continuous function $q:\mathcal{M}  \to  (1, +\infty)$
 such that
 $$1< q(y)< p^{\star}_{s}(y)=\frac{N p(y,y)}{N- sp(y,y)}$$
and
$$f(y,  z) \leq  \alpha \left( 1+ \vert z\vert^{q({y})-1}\right), ~  \text { a.e.  } ~  y \in  \mathcal{M}, ~  z\in \mathbb{R},$$
$$r^{+} = \sup_{y\in\mathcal{M}} r(y)
\leq  q^{-}=\min _{y\in\mathcal{M}}q(y)\leq  p^{\star}_{s}(y)$$  
  and the operator   $(\mathcal{L}_{{g}}  ^{K})_{p(y,\cdot)}$ is defined by 
  \begingroup\makeatletter\def\f@size{8}\check@mathfonts   
 $$ (\mathcal{L}_{{g}}^{K})_{p(y,\cdot)} w(y)=2 \lim _{\varepsilon \rightarrow 0^{+}} \int_{\mathcal{M} \backslash \mathfrak{B}_{\varepsilon}(y)}|w(y)-w(z)|^{p(y, z)-2}(w(y)-w(z)) K(y, z) d v_{{g}}(z),$$ 
 \endgroup
  for every  $z \in \mathcal{M},$ where, 
$$\mathfrak{B}_{\varepsilon}(y)= \{z \in \mathcal{M}: d_{{g}}(y, z)< \varepsilon \}
~~
\hbox{and}
~~
 d v_{{g}}(z)=|{g}_{ij}|^{\frac{1}{2}} dz.$$ (see Definition~\ref{d1}).  
 Furthermore, $K: \mathcal{U} \times \mathcal{U} \to (0, +\infty) $ is a symmetric  kernel function satisfying the following variant of  Lévy-integrability type condition
\begin{equation}\label{equation50}
g  K\in L^{1}\left( \mathcal{M}\times \mathcal{M}, dv_{g}(y) dv_{g}(z)\right),
~~
\hbox{where}
~~
g(y, z) =\min\{ d_{{g}}(y, z), 1\}  
\end{equation}   
and the following coercivity condition for some $\alpha_{0}>0$
  \begin{equation}\label{equation51} \frac{\alpha_{0}}{d_{{g}}(y, z)^{N+s p(y, z)}} \leq K(y, z),  \  \text { a.e. } \,  (y, z)\in  \mathcal{M} ^{2},\,\,     y \neq z. 
 \end{equation}
We give examples of symmetric kernel functions that satisfy Lévy-integrability and coercivity type conditions.
\begin{ex}
The following functions satisfy   conditions \eqref{equation50}-\eqref{equation51}.
\begin{enumerate}
\item[$\clubsuit$]  $K(y, z)= d_{g}(y,z)^{-N-sp(y,z))}.$
\item[$\clubsuit$]   $K(y, z)= \frac{\alpha_{0}}{d_{g}(y,z)^{N+sp(y,z))}},$ where $\alpha_{0}$ is a positive real.
\item[$\clubsuit$]   $K(y, z)= \exp \left( \frac{1}{d_{g}(y,z)^{N+sp(y,z))}} \right). $
\item[$\clubsuit$]   $K(y, z)= \exp \left(  -\delta d_{g}(y,z)^{2}\right),  $  where $\delta $ is a positive real.
\end{enumerate}  
\end{ex}
Recently, results on fractional Sobolev spaces and problems involving the $p(y,\cdot)$-operator and their applications have received a lot of attention. For example, Kaufmann, Rossi, and Vidal \cite{kaufmann} first introduced the new class $W^{s,  q(y), p(y, z)}(\mathcal{U})$ defined by
$$W^{s,  q(y), p(y, z)}(\mathcal{U})=\{ w\in L^{q(y)} (\mathcal{U}):  \int_{\mathcal{M} \times\mathcal{M}} \frac{|w(y)-w(z)|^{p(y, z)}}{K(y, z) }dydz< +\infty \},$$
where $q\in  C(\overline{\mathcal{U}}, (1, \infty))$ and   $K(y, z)=|y-z|^{N+s p(y, z)},$ and they proved the existence of a compact embedding 
$$\displaystyle W^{s,  q(y, p(y, z)}(\mathcal{U})\hookrightarrow   L^{r(y)} (\mathcal{U}),
~\hbox{ for every}~
r\in C(\mathcal{U})$$
such that
$1< r(y)<p^{\star}_{s} (y),$
for every $y\in \overline{\mathcal{U}}.$ They
also  studied solvability of  the following fractional $  p(y,\cdot)$-Laplacian problem
\begin{eqnarray}
\begin{gathered}
\left\{\begin{array}{llll}
\mathcal{L} w(y)+|w(y)|^{q(y)-2} w(y)&=h(y) & \text { in }& \mathcal{Q}, \\ 
   \hspace{2cm}w&=0 & \text { in }& \partial \mathcal{Q},
\end{array}\right.
\end{gathered}
\end{eqnarray}
with $h \in L^{a(y)}(\mathcal{Q}), a(y)>1$. 

For more results on the functional framework, we refer to Bahrouni and R\u{a}dulescu~\cite{bahrouni} who   proved  the solvability of the following problems
\begin{eqnarray}
\begin{gathered}
\left\{\begin{array}{llll}
\mathcal{L} w(y)+|w(y)|^{q(y)-2}w(y)&=\lambda|w(y)|^{r(y)-2}w(y) & \text { in }& \mathcal{U}, \\ 
\hspace{2cm}  w&=0 &\text { in }& \mathbb{R}^{N}\backslash \mathcal{U},
\end{array}\right.
\end{gathered}
\end{eqnarray}
by using Ekeland's variational method, where  $\mathcal{U}$ is  an open bounded  subset of   $\mathbb{R}^{N}, \lambda>0, $  
$\displaystyle r(y)< p^{-}= \min_{(y, z)\in \mathcal{U}\times \mathcal{U}} p(y, z), $ and  $\mathcal{L} w$ is the fractional $p(y,\cdot)$ Laplacian operator. 

Bahrouni \cite{bahroun} continued to study  the space $W^{s,  q(y, p(y, z)}(\mathcal{U}).$ More specially, he  proved  the strong  comparison principle for $\left(-\Delta\right)^{s}_{p(y, .)}$ and by using the sub-supersolution method, he showed the solvability of the following nonlocal equation
\begin{eqnarray}
\begin{gathered}
\left\{\begin{array}{llll}
\left(-\Delta\right)^{s}_{p(y)} w(\cdot)&=h(y, w(\cdot)) & \text { in }& \mathcal{U}, \\ 
 \hspace{2cm}  w&=0 &\text { in }& \mathbb{R}^{n}\backslash \mathcal{U},
\end{array}\right.
\end{gathered}
\end{eqnarray}
where $\mathcal{U}$ is an open bounded domain, $s\in(0,1),$  $p $ is a continuous function, and $h$ satisfies the following growth
$$
|h(y, z)| \leq A_{1}\vert z\vert^{r(y)-1}+A_{2},~ \text{ for every } (y, z) \in \mathbb{R}^{N+1}, ~
$$$ \hbox{where}~
r\in C( \mathbb{R}^{N},\mathbb{R}),~ 1< r(y)< p^{\star}_{s} (y),~\hbox{for every}~ y\in \mathbb{R}^{N}.$

 The generalized fractional Sobolev space was studied in  \cite{ bahroun, bahrouni, kaufmann} and further developed in  \cite{ho}.  They proved a fundamental compact embedding for this space and investigated the multiplicity and boundedness   of solutions to the following problem
\begin{eqnarray}
\begin{gathered}
\left\{\begin{array}{llll}
\left(-\Delta\right)^{s} _{p(y)} w(\cdot)&=f(\cdot, w(\cdot)) & \text { in }& \mathcal{U}, \\ 
 \hspace{2cm}  w&=0 &\text { in }& \mathbb{R}^{N}\backslash \mathcal{U},
\end{array}\right.
\end{gathered}
\end{eqnarray}
where $p\in C(\mathbb{R}^{N}\times\mathbb{R}^{N}, (1, +\infty))$ is
such that  $p$ satisfies the following conditions
\begin{equation}\label{l20O}
p(y, z)=p( z, y),  \  \text { for every } (y, z) \in  \mathbb{R}^{2N}, 
 \end{equation}
\begin{equation}\label{l700}
1<\inf_{(y, z) \in \mathbb{R}^{N}\times\mathbb{R}^{N}}p(y, z)\leq  p(y, z)< \sup_{(y, z) \in \mathbb{R}^{N}\times\mathbb{R}^{N}}p(y, z)< \frac{N}{s},
 \end{equation}
   $f:\mathbb{R}^{N}\times  \mathbb{R} \rightarrow \mathbb{R} $ is a Carathéodory function,  and  $\left(-\Delta_{p(y)}\right)^{s}$ is an operator defined by $$\left(-\Delta\right)^{s} _{p(y)} w(y)= 2 \lim _{\varepsilon \rightarrow 0^{+}} \int_{ \mathbb{R}^{N} \backslash B_{\varepsilon}(y)} \frac{|w(y)-w(z)|^{p(y, z)-2}(w(y)-w(z))}{|y-z |^{N+s  p(y,z)}}  d z,$$
   $ ~\hbox{ where}~  B_{\varepsilon}(y)=\{ z \in \mathbb{R}^{N}: |z-y|< \varepsilon \}.$
   
   The approaches for ensuring the existence of weak solutions for a class of nonlocal fractional problems with variable exponents were addressed in greater depth in \cite{ bennouna, aberqi, ayazoglu, bahroun, bahrouni, benslimane, berkovits, chang,  chen,  ho, kaufmann,  liu, radulescu, servadei} and the references therein.
  In the non-Euclidean case, classical Sobolev spaces on  Riemannian manifolds have been investigated for more than seventy years  \cite{aubin,  hebey, palais}. The theory of these spaces has been applied to
   isoperimetrical inequalities \cite{hebey} and the Yamabe problem \cite{trudinger}. In \cite{gaczkowski}  the authors investigated the theory of generalized Sobolev spaces on compact Riemannian manifolds. Moreover, they proved the compact embeddability of these spaces into the H\"older space. They also studied a PDE problem involving
   $p(\cdot)$-Laplacian operator.   
  
  In addition, the authors in \cite{gorka} studied variable exponent function spaces on complete non-compact Riemannian manifolds. They used classical assumptions on the geometry to establish compact embeddings between Sobolev spaces and the H\"older function space. 
   Finally, they also showed the existence of solutions to the $p(\cdot)$-Laplacian problem.
The authors in \cite{guo} introduced the fractional Sobolev spaces on  Riemannian manifolds. As a consequence, they investigated fundamental properties, such as compact embeddings, completeness, density, separability, and reflexivity. They also investigated the existence of solutions to the following equation:
\begin{eqnarray}
\begin{gathered}
\left\{\begin{array}{llll}
\left(-\mathcal{L}_{{g}}\right)_p^{s} w(y)&=f(y, w(y)) & \text { in }& \mathcal{U}, \\ 
 \hspace{1.6cm}w&=0 & \text { in }& \mathcal{M}  \backslash \mathcal{U},
\end{array}\right.
\end{gathered}
\end{eqnarray}
where $\mathcal{M} $ is a compact  manifold of dimension d,  $ \mathcal{U} \subset \mathcal{M}$ is an open bounded subset of $\mathcal{M}$,  $s \in(0,1)$,  $p>1$ with  $d>ps $,  $f:\mathcal{M}\times\mathbb{R} \rightarrow \mathbb{R}$ is a Carathéodory function, and the operator $\left(-\mathcal{L}_{g}\right)_{p}^{s} w(y),$  $ y\in \mathcal{M}$ is defined by
$$
\left(-\mathcal{L}_{{g}}\right)_{p}^{s} w(y)=2 \lim _{\varepsilon \rightarrow 0^{+}} \int_{\mathcal{M} \backslash \mathfrak{B}_{\varepsilon}(y)} \frac{|w( y)-w(z)|^{p-2}(w(y)-w(z))}{\left(d_{{g}}(y, z)\right)^{d+p s}} d v_{{g}}(z).
$$
 
Aberqi, Benslimane, Ouaziz, and Repov\v{s} \cite{aberqi} introduced  the space \\$W^{s, p(y, z)}(\mathcal{M})$ and proved some important properties of this space and studied the following problem
$$
(\mathcal{P})\left\{\begin{array}{l}
\left(-\mathcal{L}_{{g}}\right)_{p(y,.)}^{s} w(y)+\mathcal{V}(y)|w(y)|^{q(y)-2} w=h(y, w(y)) \quad \text { in } \mathcal{U}\\

\left.w\right|_{\partial \mathcal{U}}=0.
\end{array}\right.$$

Fractional Sobolev spaces and problems involving the $p(\cdot, \cdot)$-Laplacian operator have attracted significant attention in recent decades. This class of operators appears rather naturally in a variety of applications, including optimization  and financial mathematics, we cite the well-known example by Carbotti, Dipierro, and Valdinoci \cite{carb19} who obtained the following equation:
$$
\frac{\partial V}{\partial t}\left(S_t, t\right)+\mathcal{A} V\left(S_t, t\right)=r V\left(S_t, t\right)-r \frac{\partial V}{\partial t}(0, t) S_t,
$$
where $\mathcal{A}:=a \partial^2-b(-\Delta)^s$ with
 $a, b \geqslant 0$ and $r \in \mathbb{R}$. Here, $S_t$ is the price at time $t$ and $V$ the value of option.
 They are also useful in optimal control, engineering, quantum mechanics, obstacle problems, elasticity, image processing, minimal surfaces, stabilization of Lévy processes, game theory, population dynamics, fluid filtration, and stochastics, see for example \cite{applebaum, bae, caffarelli, chang, choi, diening, gilboa, ruzicka, servadei} and the references therein.

Our work's novelty is extending general Sobolev spaces to  Sobolev spaces
$W_{K}^{q(y), p(y,z)}(\mathcal{U})$ 
with   kernel  function
$K$ 
on $\mathcal{M}.$  We shall prove important properties of this new class of spaces. In particular,  we shall investigate the existence of solutions to problem \eqref{k1} using the topological degree method. This work generalizes previous results \rev{\cite{bennouna, aberqi, bahroun, bahrouni, biswas,  guo, ho,  kaufmann}}. However, the main difficulty is presented by the fact that the  $p(\cdot, \cdot)$-Laplacian operator has a more complicated nonlinearity than the $p$-Laplacian operator. For example, it is non-homogeneous. Other complications are due to the non-Euclidean framework of our problem. Also  checking for example the density of the space $C^{\infty}(\mathcal{M})$ in  $W_{K}^{q(y), p(y,z)}(\mathcal{U}),$ because the notion the translation in Riemannian manifolds is not defined.
To the best of our knowledge, there were no such results prior to this work.

Our first major result is the following theorem.
\begin{thm}\label{embedding}
 Suppose that $(\mathcal{M} , {g})$  is  a compact $N$-dimensional Riemannian manifold,   
 $\mathcal{U}$  is  a smooth open subset of $\mathcal{M},$ 
 $K:\mathcal{U}\times\mathcal{U} \rightarrow (0, +\infty) $ is a  symmetric function satisfying  Lévy-integrability and coercivity  conditions, $q\in C^{+}(\mathcal{M}), $ 
  $p:\mathcal{U}\times\mathcal{U} \rightarrow (1, +\infty)$ satisfies conditions  \eqref{l3}-\eqref{l30} and
   $$sp(y,z)<N, ~ p(y, y)< q(y),~\hbox{ for every} ~   (y, z)\in\mathcal{U}^{2},$$
and $\mathfrak{\ell}:\overline{\mathcal{M}}  \rightarrow (1,+ \infty)$  is a continuous variable exponent such that $$p^{*}_{s}(y)= \frac{Np(y,y)}{N-sp(y,y)}>\mathfrak{\ell}(y)\geq\mathfrak{\ell}^{-}=\min_{y \in  \overline{\mathcal{M}}}\mathfrak{\ell}(y)>1.$$
  Then the space  $W_{K}^{q(y), p(y,z)}(\mathcal{U})$ is continuously embeddable in $L ^{\mathfrak{\ell}(y)}(\mathcal{U})$ and there exists a positive constant $C= C(N,  s, p, q, \mathcal{U} )$ such that  $$|w|_{L ^{{l}(y)}(\mathcal{U})}\leq  ||w||_{W_{K}^{q(y), p(y,z)}(\mathcal{U})},\ \text{for every}\quad w\in W_{K}^{q(y), p(y,z)}(\mathcal{U}).$$  Moreover, this embedding is compact.
\end{thm}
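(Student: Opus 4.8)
The plan is to disentangle the two non-standard features of the space — the kernel $K$ and the non-Euclidean geometry — and treat each separately, ultimately reducing matters to the variable-exponent fractional Sobolev embedding in $\mathbb{R}^N$ of Kaufmann, Rossi and Vidal \cite{kaufmann}. The first step exploits the coercivity condition \eqref{equation51}: writing the modular $\rho_K(w)=\int_{\mathcal{M}^2}|w(y)-w(z)|^{p(y,z)}K(y,z)\,dv_g(y)dv_g(z)$, condition \eqref{equation51} gives $\rho_K(w)\ge \alpha_0\int_{\mathcal{M}^2}|w(y)-w(z)|^{p(y,z)}d_g(y,z)^{-N-sp(y,z)}\,dv_g(y)dv_g(z)$. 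Comparing the associated Luxemburg norms (the scaling $\rho(u/\lambda t)\le t^{-p^-}\rho(u/\lambda)$ for $t>1$ converts the modular inequality into a norm inequality with constant $\alpha_0^{-1/p^-}$) yields a continuous injection of $W_K^{q(y),p(y,z)}(\mathcal{U})$ into the kernel-free space $W^{s,q(y),p(y,z)}(\mathcal{U})$ carrying the geodesic Gagliardo kernel. It therefore suffices to prove both assertions for the latter space.

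For the continuous embedding I would localize using the compactness of $\mathcal{M}$. Fix a finite atlas $\{(\Omega_i,\phi_i)\}_{i=1}^m$ whose charts are bi-Lipschitz onto Euclidean balls, together with a subordinate smooth partition of unity $\{\eta_i\}$. On each chart one has the metric equivalence $c^{-1}|\phi_i(y)-\phi_i(z)|\le d_g(y,z)\le c|\phi_i(y)-\phi_i(z)|$ and the comparability $c^{-1}dx\le dv_g\le c\,dx$ of the volume form with Lebesgue measure. Writing $w=\sum_i \eta_i w$ and pushing each piece $\eta_i w$ forward by $\phi_i$, these comparisons transform the geodesic seminorm into the Euclidean one up to multiplicative constants, so that the subcritical embedding $W^{s,q(\cdot),p(\cdot,\cdot)}\hookrightarrow L^{\ell(\cdot)}$ of \cite{kaufmann}, valid since $\ell(y)<p^*_s(y)$, bounds $|\eta_i w|_{L^{\ell}}$. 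Summing over $i$ produces $|w|_{L^{\ell(y)}(\mathcal{U})}\le C\,\|w\|_{W_K^{q(y),p(y,z)}(\mathcal{U})}$.

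For the compactness I would argue through the Vitali convergence theorem. Since $p^*_s$ and $\ell$ are continuous on the compact set $\overline{\mathcal{M}}$ with $\ell(y)<p^*_s(y)$ pointwise, the gap $\delta:=\min_{\overline{\mathcal{M}}}(p^*_s-\ell)$ is strictly positive, so $\ell(y)+\delta\le p^*_s(y)$; applying the previous step with this larger exponent gives a uniform bound $|w_n|_{L^{\ell+\delta}(\mathcal{U})}\le C$ for any sequence $\{w_n\}$ bounded in $W_K^{q(y),p(y,z)}(\mathcal{U})$, and hence equi-integrability of $\{|w_n|^{\ell(\cdot)}\}$ in $L^1(\mathcal{U})$. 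On the other hand, the Euclidean fractional Rellich–Kondrachov theorem applied chart by chart yields, after a diagonal extraction, a subsequence converging almost everywhere on $\mathcal{U}$. Equi-integrability together with a.e. convergence force the modulars $\int_{\mathcal{U}}|w_n-w|^{\ell(y)}\,dv_g\to 0$, hence strong convergence in $L^{\ell(y)}(\mathcal{U})$, which is the asserted compactness.

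The principal obstacle is the verification that the localization in the second step does not destroy finiteness of the seminorm, i.e. the manifold ``product rule'' $[\eta_i w]_{s,p}\le C\big([w]_{s,p}+|w|_{L^{q(y)}}\big)$. The familiar Euclidean proof of such a commutator bound uses translations and the homogeneity of the $p$-th power, both unavailable here: translations are undefined on $\mathcal{M}$ and the variable exponent renders $|a+b|^{p(y,z)}$ non-homogeneous. I expect the technical heart of the argument to be a geometric replacement of this estimate, obtained from the Lipschitz bound $|\eta_i(y)-\eta_i(z)|\le L\,d_g(y,z)$ combined with the Lévy-integrability \eqref{equation50} to absorb the diagonal singularity, after splitting $\mathcal{M}^2$ into the near-diagonal region $\{d_g(y,z)<1\}$ and its complement.
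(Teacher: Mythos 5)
Your first step (using the coercivity condition \eqref{equation51} to pass from the kernel seminorm to the geodesic Gagliardo seminorm) is exactly what the paper does, and your Vitali-based compactness argument is a reasonable variant of the paper's second half. But the proposal has a genuine gap at its core: both your continuous embedding and your compactness argument funnel through the cutoff estimate $[\eta_i w]_{s,p}\le C\bigl([w]_{s,p}+|w|_{L^{q(y)}}\bigr)$, which you flag as the "technical heart" but do not prove, and the fix you sketch does not work. The Lipschitz bound $|\eta_i(y)-\eta_i(z)|\le L\,d_{g}(y,z)$ together with Lévy integrability only tames the \emph{near-diagonal} region. The real obstruction is off-diagonal: on $\{d_{g}(y,z)\ge\varepsilon\}$ the cross term is $\int\!\!\int |w(z)|^{p(y,z)}|\eta_i(y)-\eta_i(z)|^{p(y,z)}K(y,z)\,dv_{g}(y)\,dv_{g}(z)$, and the hypotheses only give $p(y,y)<q(y)$; nothing prevents $p(y,z)$ for $y$ far from $z$ from exceeding $q(z)$, or even $p^{*}_{s}(z)$, so $|w(z)|^{p(y,z)}$ is not controlled by $|w|_{L^{q(y)}}$ (nor by any embedding you are allowed to use at that stage). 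Worse, condition \eqref{equation50} makes $K$ merely \emph{integrable} off the diagonal, not bounded, so even improved integrability of $w$ would not immediately close the estimate. Since your a.e.-convergence step (chart-by-chart Rellich--Kondrachov) also requires the localized pieces $\eta_i w_n\circ\phi_i^{-1}$ to be bounded in a Euclidean fractional space, the missing lemma undermines both halves of the proof as written.

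The paper's proof of the continuous embedding is structured precisely to avoid this issue: instead of a partition of unity, it splits $\mathcal{U}$ into finitely many \emph{disjoint} Lipschitz pieces $\mathcal{U}_j$ of small diameter on which the exponents are nearly constant, applies the constant-exponent fractional embedding on manifolds (\cite[Lemma 2.4]{guo}) to $w$ itself on each piece --- restriction of the Gagliardo modular to $\mathcal{U}_j\times\mathcal{U}_j$ can only decrease it, so no multiplication by cutoffs ever occurs --- and then compares variable with constant exponents via H\"older's inequality after lowering $s$ to some $t<s$ (their inequalities (a), (b), (c)); coercivity \eqref{equation51} enters only at the end to replace the geodesic kernel by $K$. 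In fairness, the paper's own compactness step does use a partition of unity and cites \cite{kaufmann} chart by chart, asserting without proof that $\eta_j w_n$ remains in the space, so it silently leans on the same unproven multiplication lemma; but a self-contained proof along your route would have to establish that lemma (for instance by first extracting off-diagonal higher integrability of $w$ from finiteness of the far-from-diagonal part of the modular), which your near/far splitting does not accomplish.
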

Our second main result is related to the investigation of the following fractional $p(y,\cdot)$-Laplacian  problem with a general kernel $K$
\begin{eqnarray*}
\begin{gathered}
(\ref{k1})\left\{\begin{array}{llll}
(\mathcal{L}_{{g}}  ^{K})_{p(y,\cdot)} w(y)&=\lambda   \beta(y)|w(y)|^{r(y)-2}w(y)+ f(y, w(y)) & \text { in }& \mathcal{U}, \\ 
\hspace{2cm}  w&=0 &\text { in }&  \mathcal{M} \backslash \mathcal{U}.
\end{array}\right.
\end{gathered}
\end{eqnarray*}
Using Berkovits' topological degree, we study the existence of solutions and prove the following theorem. 
\begin{thm}\label{existence}
Suppose that $(\mathcal{M} , {g})$  is  a compact $N$-dimensional Riemannian manifold,   
 $\mathcal{U}$  is  a smooth open subset of $\mathcal{M},$ 
 $K:\mathcal{U}\times\mathcal{U} \rightarrow (0, +\infty) $ is a  symmetric function satisfying  Lévy-integrability and coercivity  conditions. Assume that assumption $ (\mathcal{B}_{1})$  holds. Then problem \eqref{k1} has at least one weak solution $w\in W_{K}^{q(y), p(y,z)}(\mathcal{U}). $
\end{thm}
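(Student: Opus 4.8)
The plan is to recast problem \eqref{k1} as an operator equation on the space $X:=W_{K}^{q(y),p(y,z)}(\mathcal{U})$ (with the exterior condition $w=0$ on $\mathcal{M}\setminus\mathcal{U}$ built in) and then to apply Berkovits' topological degree for perturbations of $(S_+)$-operators by compact maps. First I would record the weak formulation: $w\in X$ is a weak solution of \eqref{k1} when, for every test function $\varphi\in X$, the nonlocal pairing of $w$ against $\varphi$ weighted by $K$ equals $\lambda\int_{\mathcal{U}}\beta|w|^{r-2}w\varphi\,dv_{g}+\int_{\mathcal{U}}f(y,w)\varphi\,dv_{g}$. This leads me to define $A\colon X\to X^{*}$ by the principal pairing
\[
\langle A w,\varphi\rangle=\int_{\mathcal{M}^{2}}|w(y)-w(z)|^{p(y,z)-2}(w(y)-w(z))(\varphi(y)-\varphi(z))\,K(y,z)\,dv_{g}(y)\,dv_{g}(z),
\]
and $B\colon X\to X^{*}$ by the reaction pairing, so that a weak solution is precisely a zero of $T:=A-B$.

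The analytic heart of the argument is to establish the structural properties of $A$ and $B$. For $A$ I would prove that it is bounded and demicontinuous and, most importantly, of type $(S_+)$: whenever $w_{n}\rightharpoonup w$ in $X$ and $\limsup_{n}\langle A w_{n},w_{n}-w\rangle\le 0$, then $w_{n}\to w$ strongly. This rests on the strict monotonicity of $t\mapsto|t|^{p-2}t$ combined with the coercivity condition \eqref{equation51}, which forces the modular $\rho(w)=\int_{\mathcal{M}^{2}}|w(y)-w(z)|^{p(y,z)}K\,dv_{g}\,dv_{g}$ to dominate the Gagliardo seminorm and hence to control the $X$-norm. The same modular lower bound yields coercivity of $A$, so that $A$ is in fact a bijective homeomorphism of $X$ onto $X^{*}$. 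For $B$, assumption $(\mathcal{B}_{1})$ supplies the subcritical growth $|f(y,z)|\le\alpha(1+|z|^{q(y)-1})$ together with $r^{+}\le q^{-}$; feeding this into the compact embedding $X\hookrightarrow L^{\ell(y)}(\mathcal{U})$ of Theorem~\ref{embedding} (used with an exponent just below $q$) shows that $B$ is bounded and completely continuous. Consequently $T=A-B$ lies in the admissible class on which Berkovits' degree is defined.

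With these properties secured, I would compute the degree through a homotopy. Take $H(t,w)=A w-t\,B w$ for $t\in[0,1]$. At $t=0$ the operator $A$ is odd, bounded, demicontinuous, of type $(S_+)$ and satisfies $A(0)=0$, so Berkovits' degree gives $\deg(A,B_{R},0)\ne 0$ (indeed $=1$) on every ball $B_{R}=\{\,\|w\|_{X}<R\,\}$. Provided a uniform a priori bound is available, homotopy invariance then yields $\deg(T,B_{R},0)=\deg(A,B_{R},0)\ne 0$, and the solution property of the degree produces a zero of $T$, that is, a weak solution of \eqref{k1} in $X$.

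The main obstacle is precisely this a priori estimate: exhibiting an $R>0$ such that every solution of $H(t,w)=0$ with $t\in[0,1]$ satisfies $\|w\|_{X}<R$. Testing the identity $A w=t\,B w$ against $w$ reduces the task to controlling $\rho(w)$ from above by $\lambda\int_{\mathcal{U}}\beta|w|^{r}\,dv_{g}+\int_{\mathcal{U}}f(y,w)w\,dv_{g}$, where the left side must be compared with $\|w\|_{X}$ through the modular–norm inequalities of the variable-exponent space and the right side estimated via $(\mathcal{B}_{1})$ and the embedding into $L^{\ell(y)}$. The delicate point is to exploit the exponent hierarchy $1<p^{-}\le p(y,z)$, $r^{+}\le q^{-}$ and $q(y)<p^{*}_{s}(y)$ so that the reaction growth is absorbed by the coercive modular; this is the step most sensitive to the non-homogeneity of the $p(\cdot,\cdot)$-Laplacian and to the non-Euclidean, kernel-weighted setting, and it is where the hypotheses \eqref{equation50}–\eqref{equation51} on $K$ enter decisively.
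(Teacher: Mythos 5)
Your overall architecture (split the problem into a principal part $A$ of type $(S_{+})$ and a compact reaction part $B$, then use degree theory, a homotopy, and an a priori bound) is the same as the paper's, but there is a concrete mismatch in the degree theory you invoke. The Berkovits degree constructed in the paper's preliminaries (Theorem~\ref{th}, from \cite{berkovits} and \cite{kim}) is defined only for mappings of abstract Hammerstein type: elements of $\mathcal{F}_{\mathfrak{B},1}(\overline{B})$, i.e.\ maps from a subset of a Banach space $E$ \emph{into $E$ itself}, of type $(S_{+})_{\mathfrak{B}}$ relative to an essential inner map $\mathfrak{B}\in\mathcal{F}^{\star}_{0}$. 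Your operator $T=A-B$ maps $X=W_{K}^{q(y),p(y,z)}(\mathcal{U})$ into $X^{*}$, so it does not belong to this class, and your assertion that ``$T=A-B$ lies in the admissible class on which Berkovits' degree is defined'' is not correct for the degree actually cited. This is precisely why the paper does not argue in the primal space: it first proves (Lemma~\ref{a}) that $L$ is a bounded, strictly monotone $(S_{+})$ homeomorphism, sets $G=L^{-1}$ (bounded, continuous, of type $(S_{+})$ via the Minty--Browder theorem), and rewrites the problem as the dual-space equation $h+S\circ Gh=0$ with $h=Lw$; the map $i+S\circ G$ is then of Hammerstein form on $X^{*}$ with essential inner map $G$, which is exactly where Theorem~\ref{th} applies. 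Your primal-space route could be repaired by invoking instead the classical Browder--Skrypnik degree for bounded demicontinuous $(S_{+})$ maps from $X$ to $X^{*}$ (a completely continuous perturbation of an $(S_{+})$ map is again $(S_{+})$), but that is a genuinely different tool from the one you named and from the one the paper develops, and your normalization step ($\deg(A,B_{R},0)=1$) would then also need an argument, e.g.\ an admissible affine homotopy from $A$ to the duality map rather than an appeal to oddness.

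The second, and more serious, gap is that the a priori estimate, which you yourself call ``the main obstacle,'' is never carried out: you describe what must be controlled but produce no bound. This estimate is the quantitative heart of the proof; in the paper it occupies most of Section~\ref{2}, namely the boundedness of the set $D=\{h\in X^{*}: h+tS\circ Gh=0 \ \text{for some}\ t\in[0,1]\}$, obtained by pairing with $w=Gh$ and combining the growth condition $(\mathcal{B}_{1})$, H\"older's and Young's inequalities, and the continuous embedding of Theorem~\ref{embedding}. In particular, the feasibility of the bound hinges on comparing the powers of $\|w\|$ generated by the reaction terms (terms of order $\|w\|^{r^{+}}$ and of order governed by $q$) against the power $p^{-}$ coming from the modular lower bound $\rho(w)\geq \|w\|^{p^{-}}$ valid when the Gagliardo seminorm exceeds $1$; merely listing the exponent hierarchy $r^{+}\leq q^{-}$ and $q(y)<p^{*}_{s}(y)$ does not show that the superlinear terms are absorbed. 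Until this computation is actually performed, there is no radius $R$ for which the boundary condition $0\notin H(t,\partial B_{R})$ holds, the homotopy invariance property cannot be applied, and the degree argument never gets off the ground.
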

The  paper is organized as follows:  
In Section~\ref{rappel}, we collect the main definitions and properties of generalized Lebesgue spaces and generalized  Sobolev spaces on compact manifolds and provide crucial background on recent Berkovits degree theory. 
In Section~\ref{framework}, we establish completeness, separability, and reflexivity properties of our spaces (Lemmas~\ref{completness}, ~\ref{unif-convex}, and~\ref{separability}).
In Section~\ref{1}, we prove our first main result (Theorem~\ref{embedding}). 
In Section~\ref{2} we prove our second main result (Theorem~\ref{existence}).
Finally, in Section~\ref{appendix}, we  prove some lemmas needed for the proofs of our main results.
\section{Preliminaries}\label{rappel}
\subsection{ Generalized  Lebesgue Spaces  on Compact  Manifolds}
Throughout this section,  $(\mathcal{M} , {g})$ will be a compact Riemannian manifold of dimension $\displaystyle N$.
 To start,   we briefly review some fundamental Riemannian geometry concepts that will be needed. For more details see  \cite{aubin,  guo, hebey}.  
 
 A local chart on $\mathcal{M}$  is a pair $(\mathcal{U}, \varphi)$, where $\mathcal{U}$ is an open subset of $\mathcal{M}$ and $\varphi$ is a homeomorphism of  $\mathcal{U}$ onto an open subset of $\mathbb{R}^{N}.$ Furthermore, a  collection $(\mathcal{U}_{i}, \varphi_{i})_{i \in J}$ of local charts  such that $\mathcal{M}= \bigcup_{j\in J} \mathcal{U}_{j},$  is called an atlas  of  manifold $\mathcal{M}$.   For some atlas  $(\mathcal{U}_{j}, \varphi_{j})_{j \in J}$of $\mathcal{M},$ we  say that  a family $(\mathcal{U}_{j}, \varphi_{j}, \beta _{j})_{j\in J}$ is 
 a partition of unity subordinate to  the covering $(\mathcal{U}_{j}, \varphi_{j})_{j\in J}$ if the following holds:
\begin{itemize}
\item[1)] $\sum_{j\in J}  \beta_{j}=1, $
\item[2)] $ (\mathcal{U}_{j}, \varphi_{j})_{j\in J}$ is an atlas of  $\mathcal{M},$ 
\item[3)]      $ \text{supp}    (\beta_{j}) \subset  \mathcal{U}_{j},$  for every $j\in J.$  
\end{itemize}

 \begin{defn}(see \cite{hebey})\label{d1}
  Suppose that $\displaystyle w:\mathcal{M}\rightarrow\mathbb{R}$ is  a continuous function with compact support, $(\mathcal{U}_{j}, \varphi_{j})_{j\in J}$ is an atlas of  $\mathcal{M},$ and  $(\mathcal{U}_{j}, \varphi_{j}, \beta _{j})_{j\in J}$ is a partition of unity subordinate to $\displaystyle (\mathcal{U}_{j}, \varphi_{j})_{j\in J}.$ We  define the Riemannian measure of $\displaystyle w$  in $\mathcal{M}$ as follows:
  $$\int_{\mathcal{\mathcal{M}}} w(y)d v_{{g}}(y)= \sum_{j\in J} \int_{       \varphi_{j} (\mathcal{U}_{j})}  (|{g}_{ij}|^{\frac{1}{2}} \beta_{j} w)\circ \varphi^{-1}_{j}(y)dy, $$
   where $d v_{{g}}(y)=|{g}_{ij}|^{\frac{1}{2}} dy $ is the Riemannian volume element on $(\mathcal{M} , {g}),   {g}_{ij} $ are the components of  the metric  ${g}$ in the local chart $(\mathcal{U}_{j}, \varphi_{j})_{j\in J}, $ and  $\displaystyle dy$ is the Lebesgue volume of $\mathbb{R}^{N}.$
 \end{defn}
 \begin{defn}(see \cite{aubin})
 Let $\gamma:[{a},{b}] \rightarrow\mathcal{M}$ be a  differentiable curve in   $\mathcal{M}$ such  that $\gamma \in C^{1}([{a}, {b}], \mathcal{M}).$ Then the length of $\gamma $ is given by  $$ L(\gamma)=\int_{{a}}^{{b}}  ({g}(\gamma^{'}(t), \gamma^{'}(t)))^{\frac{1}{2}} dt.$$
  \end{defn}
  \begin{defn}(see \cite{aubin})
   For any $(y, z)\in  \mathcal{M}^{2},$ we define the distance $d_{{g}}(y, z)$ between  $y$ and $z$ as follows 
  $$d_{{g}}(y, z)= \inf \{   L(\gamma):  \gamma({a})= y, ~   \gamma({b})= z  \}.$$
   \end{defn} 
 \begin{thm} (Stine's theorem \cite{aubin})
For any $({a}, {b})\in  \mathcal{M}^{2},$  $ d_{{g}}({a}, {b}) $  defines a distance on $(\mathcal{M} , {g}), $ and the topology determined by $d_{{g}}({a}, {b}) $ is equivalent to the topology  of $\mathcal{M}$ as a manifold.
 \end{thm}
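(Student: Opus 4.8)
The plan is to verify the metric axioms for $d_{g}$ and then to compare the induced metric topology with the manifold topology, reducing everything to a single local fact: inside any chart the Riemannian norm on tangent vectors is bi-Lipschitz equivalent to the Euclidean norm of the coordinate representative. First I would dispose of the routine axioms. Non-negativity is immediate since the integrand $(g(\gamma',\gamma'))^{1/2}$ is non-negative. Symmetry $d_{g}(y,z)=d_{g}(z,y)$ follows because reversing the parametrization of an admissible curve yields an admissible curve of the same length. The triangle inequality follows by concatenating a curve from $x$ to $y$ with one from $y$ to $z$; since concatenation produces only a piecewise-$C^{1}$ curve, I would note the standard reduction that enlarging the admissible class from $C^{1}$ to piecewise-$C^{1}$ paths (or smoothing with an arbitrarily small length change) does not alter the infimum. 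The trivial half $d_{g}(y,y)=0$ uses the constant curve, and finiteness of $d_{g}$ relies on connectedness of $\mathcal{M}$, which guarantees that any two points are joined by at least one piecewise-$C^{1}$ path.

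Next I would establish the key local comparison. Fix a chart $(\mathcal{U},\varphi)$. Because the components $g_{ij}$ are continuous and $(g_{ij}(x))$ is symmetric positive definite at every point, on any compact set $K\subset\varphi(\mathcal{U})$ its eigenvalues stay in a fixed range, so there exist constants $0<\lambda\leq\Lambda$ with $\lambda|\xi|^{2}\leq\sum_{i,j} g_{ij}(x)\xi^{i}\xi^{j}\leq\Lambda|\xi|^{2}$ for all $x\in K$ and all $\xi\in\mathbb{R}^{N}$. Consequently, for any curve whose image lies in $\varphi^{-1}(K)$ one gets $\sqrt{\lambda}\,L_{\mathrm{eucl}}(\varphi\circ\gamma)\leq L(\gamma)\leq\sqrt{\Lambda}\,L_{\mathrm{eucl}}(\varphi\circ\gamma)$, where $L_{\mathrm{eucl}}$ denotes Euclidean length of the coordinate curve.

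The remaining axiom, positivity for $y\neq z$, is where the real content lies. Choosing a chart around $y$, pick a coordinate ball $B_{\varepsilon}(\varphi(y))$ so small that $\overline{B_{\varepsilon}(\varphi(y))}\subset\varphi(\mathcal{U})$ and $\varphi(z)\notin\overline{B_{\varepsilon}(\varphi(y))}$. Any admissible curve from $y$ to $z$ must leave the closed set $\varphi^{-1}(\overline{B_{\varepsilon}(\varphi(y))})$; truncating at the first exit time, its coordinate image runs from $\varphi(y)$ to a point of the sphere $\partial B_{\varepsilon}(\varphi(y))$ and hence has Euclidean length at least $\varepsilon$. By the lower bound of the local comparison, $L(\gamma)\geq\sqrt{\lambda}\,\varepsilon$, so $d_{g}(y,z)\geq\sqrt{\lambda}\,\varepsilon>0$. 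This finishes the proof that $d_{g}$ is a genuine distance.

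Finally, for the topology I would show that the identity map is a homeomorphism by comparing neighborhood bases. The same first-exit estimate gives $B_{d_{g}}(y,\sqrt{\lambda}\,\varepsilon)\subset\varphi^{-1}(B_{\varepsilon}(\varphi(y)))$, so every manifold neighborhood of $y$ contains a $d_{g}$-ball and is therefore $d_{g}$-open. Conversely, using the straight coordinate segment from $\varphi(y)$ to $\varphi(z)$ together with the upper bound yields $d_{g}(y,z)\leq\sqrt{\Lambda}\,|\varphi(y)-\varphi(z)|$ for $z$ near $y$, whence $\varphi^{-1}(B_{\delta/\sqrt{\Lambda}}(\varphi(y)))\subset B_{d_{g}}(y,\delta)$, so every $d_{g}$-ball contains a manifold neighborhood and is manifold-open. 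The two topologies thus coincide. I expect the main obstacle to be the positivity argument of the third paragraph: it is the only place where the uniform lower eigenvalue bound $\lambda>0$, i.e.\ positive-definiteness of $g$ over a compact coordinate neighborhood, is genuinely needed, and it is precisely what rules out a short curve that escapes the ball before returning.
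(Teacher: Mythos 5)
The paper does not prove this statement at all --- it is quoted as a classical background result from Aubin's book \cite{aubin} --- and your argument is precisely the standard textbook proof: the metric axioms via reparametrization and concatenation, the bi-Lipschitz comparison $\lambda|\xi|^{2}\leq g_{ij}(x)\xi^{i}\xi^{j}\leq\Lambda|\xi|^{2}$ on compact chart subsets, the first-exit estimate for positivity, and the two ball inclusions identifying the topologies. Your proof is correct, including the rightly flagged implicit hypothesis that $\mathcal{M}$ is connected, without which $d_{g}$ would be infinite between distinct components.
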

 
 Next, we recall basic definitions and preliminary facts on the generalized Lebegue spaces $L^{ q({x})}(\mathcal{U})$ on compact manifolds,
where  $\mathcal{U}$ is an open subset of manifold $\mathcal{M}.$ For more background, we refer  to  \cite{aberqi, aubin, hebey}.
  We need to recall the notion of the covariant derivative.
\begin{defn}(see \cite{hebey})
Let  $\nabla$ be the Levi-Civita connection. For $\displaystyle w \in C^\infty(M)$, $\displaystyle \nabla^{k} w$ denotes the $k$-th covariant
derivative of $\displaystyle w$. In local coordinates, the pointwise norm of $\displaystyle \nabla^{k} w$
is given by $$
 \left |\nabla^{k} w \right| = g^{i_1j_1}\cdot\cdot\cdot g^{i_kj_k} (\nabla^{k} w)_{i_1i_2\dots i_k} (\nabla^{k} w)_{j_1j_2\dots j_k}.$$
When $k=1$, the components of $\displaystyle \nabla w$  in local coordinates are given by
$(\nabla w)_i=\nabla^{i} w.$ 
By definition, one has that
 $$
   \left |\nabla w \right| = \sum_{i,j=1}^\infty g^{ij}\nabla^{i}\ w\nabla^{j} w.
$$
 \end{defn}
 We consider the set:
 $$C^{+}(\mathcal{M})=\{ q: \mathcal{M} \to  \mathbb{R}^{+}:    q  \text { is continuous and  } 1< q^{-}< q(y)< q^{+} <+\infty\},$$  $\text  { for every  } y \in   \mathcal{M} ,~ \displaystyle q^{-}= \min _{y \in \mathcal{M} } q(y), ~
 \displaystyle q^{+}= \max _{y \in \mathcal{M} } q(y).$
 \begin{defn}(see \cite{gorka})
Let $\displaystyle q\in  C^{+}(\mathcal{M}) $ and  $k\in \mathbb{N}$. We define the Sobolev space $L_{k}^{q(y)}(\mathcal{M})$ as the completion of $\displaystyle C_{k}^{q(y)}(\mathcal{M})$ with respect to the norm $\displaystyle |w|_{L_{k}^{q(y)}}(\mathcal{M}), $ where $$\displaystyle C_{k}^{q(y)}(\mathcal{M})=\{  w \in  C^{\infty}(\mathcal{M}):    \left |\nabla^{j} w \right| \in L ^{q(y)}(\mathcal{M}), \text{ for every } j=1, 2, \cdots, k  \}, $$
and   $$  |w|_{L^{q(y)}(\mathcal{M})}=\sum _{j=0}^{k}  |  \nabla ^{j} w|_{L^{q(y)}(\mathcal{M})}, $$ where  $\displaystyle |  \nabla ^{j}w|$ is the k-the covariant  derivative of $\displaystyle w.$
 \end{defn}
 \begin{lem}(see \cite{bennouna})
For every $\displaystyle w\in L^{q(y)}(\mathcal{M}),$  the following properties hold:
\begin{itemize}
\item[i)]  If  $\quad $  $\displaystyle |w|_{L^{q(y)}(\mathcal{M})}<1,$  then  $\displaystyle |w|^{q^{-}}_{L^{q(y)}(\mathcal{M})}\leq  \rho_{q(y)}(w) \leq  |w|^{q^{+}}_{L^{q(y)}(\mathcal{M})}.$
\item[ii)]  If $\quad$ $\displaystyle |w|_{L^{q(y)}(\mathcal{M})}>1,$ then  $\displaystyle |w|^{q^{+}}_{L^{q(y)}(\mathcal{M})}\leq  \rho_{q(y)}(w) \leq  |w|^{q^{-}}_{L^{q(y)}(\mathcal{M})}.$
\item[iii)]   $\displaystyle |w|_{L^{q(y)}(\mathcal{M})}<1,   =1,   >1$ \,if only if \, $\displaystyle \rho_{q(y)}(w)<1,   =1,   >1,$
\end{itemize}
where  $\rho_{q(y)}:L^{q(y)}(\mathcal{M}) \to  \mathbb{R}$  is the mapping defined as follows $$\rho_{q(y)}(w)=\int_{\mathcal{M}}|w(y)|^{q(z)}d v_{{g}}(z).$$
 \end{lem}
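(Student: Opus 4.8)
The plan is to reduce all three parts to a single identity, $\rho(w/\|w\|)=1$ for $w\neq 0$, together with the elementary scaling behaviour of the modular. Throughout the proof I would abbreviate $\|w\|:=|w|_{L^{q(y)}(\mathcal{M})}$ and $\rho(w):=\rho_{q(y)}(w)$, and recall that the Luxemburg norm is $\|w\|=\inf\{\lambda>0:\rho(w/\lambda)\le 1\}$. Since $\mathcal{M}$ is compact and $q\in C^{+}(\mathcal{M})$, we have $1<q^{-}\le q(y)\le q^{+}<+\infty$; this boundedness of the exponent is exactly what yields the unit-modular identity
$$\rho\!\left(\frac{w}{\|w\|}\right)=1,\qquad w\neq 0 .$$
The only other analytic input is the scaling estimate. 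For $\lambda>1$ the pointwise bound $\lambda^{-q^{+}}\le\lambda^{-q(y)}\le\lambda^{-q^{-}}$ holds, and integrating $|w|^{q(y)}\lambda^{-q(y)}$ against $dv_{g}$ gives
$$\lambda^{-q^{+}}\rho(w)\le \rho\!\left(\frac{w}{\lambda}\right)\le \lambda^{-q^{-}}\rho(w);$$
for $0<\lambda<1$ the two exponents trade places, giving the reversed bound. Together with the strict monotonicity and continuity of $\lambda\mapsto\rho(w/\lambda)$, these are all the ingredients.

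Next I would prove (iii), since (i) and (ii) build on it. If $\rho(w)=1$, then $\lambda=1$ is admissible, so $\|w\|\le 1$; while for any $\lambda<1$ the scaling bound gives $\rho(w/\lambda)\ge\lambda^{-q^{-}}>1$, so no such $\lambda$ is admissible and $\|w\|\ge 1$, whence $\|w\|=1$. If $\rho(w)<1$, take $\lambda=\rho(w)^{1/q^{+}}<1$; the scaling bound gives $\rho(w/\lambda)\le\lambda^{-q^{+}}\rho(w)=1$, so $\|w\|\le\lambda<1$. If $\rho(w)>1$, then for every $\lambda\le 1$ one has $\rho(w/\lambda)\ge\rho(w)>1$ by monotonicity, so no $\lambda\le 1$ is admissible and $\|w\|>1$. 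Since the three cases on $\rho(w)$ are mutually exclusive and exhaustive, these three implications are equivalent to their converses, which is exactly (iii).

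Finally, (i) and (ii) follow from the unit-modular identity applied at $\lambda_{0}=\|w\|$. Substituting into $1=\rho(w/\lambda_0)=\int_{\mathcal{M}}|w|^{q(y)}\lambda_0^{-q(y)}\,dv_{g}$ and inserting the pointwise bound on $\lambda_0^{-q(y)}$ sandwiches the integral between $\lambda_0^{-q^{+}}\rho(w)$ and $\lambda_0^{-q^{-}}\rho(w)$; isolating $\rho(w)$ then yields a two-sided estimate of the form $\rho(w)\le\lambda_0^{q^{+}}$ and $\rho(w)\ge\lambda_0^{q^{-}}$, or the reverse, according to whether $\lambda_0>1$ or $\lambda_0<1$. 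By (iii) these two regimes are precisely $\rho(w)>1$ and $\rho(w)<1$, so the dichotomy in the exponents $q^{+},q^{-}$ is exactly the one recorded in (i) and (ii). The remaining work is only the routine algebra of solving for $\rho(w)$.

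The main obstacle is the identity $\rho(w/\|w\|)=1$: it is the one place where the finiteness $q^{+}<\infty$ and a genuine limiting argument enter, since the infimum defining the Luxemburg norm need not a priori be attained. I would justify it through the continuity of $\lambda\mapsto\rho(w/\lambda)$ and the intermediate value theorem, the continuity in turn resting on the dominated convergence theorem over the finite Riemannian volume of the compact manifold $(\mathcal{M},dv_{g})$. Everything else in the statement is pointwise estimation and bookkeeping.
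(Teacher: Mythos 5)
The paper does not prove this lemma at all: it is quoted with the citation \cite{bennouna} as a known norm--modular relation of Fan--Zhao type for variable exponent spaces, so there is no in-paper argument to compare yours against, and your proposal must stand on its own. It does: the route you take --- the unit-modular identity $\rho_{q(y)}\bigl(w/\|w\|\bigr)=1$ for $w\neq 0$ (with $\|w\|:=|w|_{L^{q(y)}(\mathcal{M})}$ the Luxemburg norm), combined with the pointwise scaling bounds $\lambda^{-q^{+}}\rho_{q(y)}(w)\le\rho_{q(y)}(w/\lambda)\le\lambda^{-q^{-}}\rho_{q(y)}(w)$ for $\lambda>1$ and the reversed bounds for $0<\lambda<1$ --- is precisely the standard proof in the literature, and your justification of the identity (finiteness of the modular on all of $L^{q(y)}(\mathcal{M})$, which is where $q^{+}<\infty$ genuinely enters, then continuity of $\lambda\mapsto\rho_{q(y)}(w/\lambda)$ by dominated convergence and the intermediate value theorem) is the right one. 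Two small points to tighten: in the case $\rho_{q(y)}(w)<1$ your choice $\lambda=\rho_{q(y)}(w)^{1/q^{+}}$ tacitly assumes $w\neq 0$, so the trivial case $w=0$ should be dispatched separately; and in the case $\rho_{q(y)}(w)>1$ your monotonicity argument only gives $\|w\|\ge 1$ directly --- strictness follows because $\|w\|=1$ together with the unit-modular identity would force $\rho_{q(y)}(w)=1$, which you have available but should invoke explicitly.

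One substantive remark on the statement itself. Your computation yields $\|w\|^{q^{+}}\le\rho_{q(y)}(w)\le\|w\|^{q^{-}}$ when $\|w\|<1$, and $\|w\|^{q^{-}}\le\rho_{q(y)}(w)\le\|w\|^{q^{+}}$ when $\|w\|>1$. These are the correct inequalities; items i) and ii) as printed in the paper have $q^{-}$ and $q^{+}$ interchanged, and as literally written they are internally inconsistent whenever $q^{-}<q^{+}$ (for $0<\|w\|<1$ one has $\|w\|^{q^{+}}<\|w\|^{q^{-}}$, so the printed sandwich $\|w\|^{q^{-}}\le\rho_{q(y)}(w)\le\|w\|^{q^{+}}$ is vacuous). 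So your proof establishes the intended, corrected form of the lemma rather than the misprinted one --- worth stating explicitly if this is to accompany the text. Similarly, you have silently repaired the typo in the displayed modular, which mixes variables ($|w(y)|^{q(z)}\,dv_{g}(z)$ should read $|w(z)|^{q(z)}\,dv_{g}(z)$); that reading is the only one under which the lemma is true.
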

 \begin{prop}(see \cite{aberqi})
For every $w$ and  $\displaystyle w_{n} \in  L^{q(y)}(\mathcal{M}),$ the following statements are equivalent:
\begin{itemize}
\item[i)] $\displaystyle \lim_{n\to  +\infty}  |w_{n}-w|_{L^{q(y)}(\mathcal{M})}= 0,$
\item[ii )]  $\displaystyle \lim_{n\to  +\infty}  \rho_{q(y)}(w_{n}-w)=0,$
\item[iii )] $\displaystyle w_{n}\rightarrow w$  in measure on  $\mathcal{M}$  and $\displaystyle \lim_{n \to  +\infty} \rho_{q(y)}(w_{n}) - \rho_{q(y)}w)=0.$
\end{itemize}
 \end{prop}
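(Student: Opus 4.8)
The plan is to establish the circular chain of implications (i)$\Leftrightarrow$(ii), (ii)$\Rightarrow$(iii), and (iii)$\Rightarrow$(ii), in which the modular is $\rho_{q(y)}(w)=\int_{\mathcal{M}}|w(y)|^{q(y)}\,dv_{g}(y)$ and $|\cdot|_{L^{q(y)}(\mathcal{M})}$ denotes the associated Luxemburg norm. The equivalence (i)$\Leftrightarrow$(ii) follows at once by applying the preceding comparison Lemma to the difference $w_{n}-w$: once $|w_{n}-w|_{L^{q(y)}(\mathcal{M})}<1$, the Lemma sandwiches $\rho_{q(y)}(w_{n}-w)$ between $|w_{n}-w|^{q^{+}}$ and $|w_{n}-w|^{q^{-}}$, and since $1<q^{-}\le q^{+}<+\infty$, either of these two quantities tends to $0$ precisely when the other does. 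Part (iii) of that Lemma ensures that $\rho_{q(y)}(w_{n}-w)<1$ forces $|w_{n}-w|_{L^{q(y)}(\mathcal{M})}<1$, so the comparison is always made in the same regime.

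For (ii)$\Rightarrow$(iii), I would first recover convergence in measure by a Chebyshev-type estimate: for fixed $\varepsilon>0$, writing $A_{n}=\{y\in\mathcal{M}:|w_{n}(y)-w(y)|>\varepsilon\}$, one has $|w_{n}-w|^{q(y)}\ge c_{\varepsilon}:=\min\{\varepsilon^{q^{-}},\varepsilon^{q^{+}}\}>0$ on $A_{n}$, so $v_{g}(A_{n})\le c_{\varepsilon}^{-1}\,\rho_{q(y)}(w_{n}-w)\to 0$. To obtain $\rho_{q(y)}(w_{n})\to\rho_{q(y)}(w)$, I would invoke the elementary inequality that for each $\delta>0$ there is a constant $C_{\delta}>0$, depending only on $\delta$ and $q^{+}$, with $|a+b|^{t}\le(1+\delta)|a|^{t}+C_{\delta}|b|^{t}$ for all $t\in[q^{-},q^{+}]$. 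Applying it with $(a,b)=(w,w_{n}-w)$ and then with $(a,b)=(w_{n},w-w_{n})$ and integrating yields $\rho_{q(y)}(w_{n})\le(1+\delta)\rho_{q(y)}(w)+C_{\delta}\rho_{q(y)}(w_{n}-w)$ together with the symmetric bound; passing to $\limsup$ and $\liminf$ and then letting $\delta\to 0$ squeezes $\rho_{q(y)}(w_{n})$ to $\rho_{q(y)}(w)$.

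The delicate direction is (iii)$\Rightarrow$(ii), a Riesz--Brezis--Lieb type statement, which I would prove by contradiction. If $\rho_{q(y)}(w_{n}-w)\not\to 0$, there is a subsequence with $\rho_{q(y)}(w_{n_{k}}-w)\ge\delta>0$; since $w_{n}\to w$ in measure, a further subsequence converges $v_{g}$-a.e.\ to $w$. On this sub-subsequence I consider the functions $h_{k}=2^{q^{+}-1}\bigl(|w_{n_{k}}|^{q(y)}+|w|^{q(y)}\bigr)-|w_{n_{k}}-w|^{q(y)}$, which are nonnegative because $|a-b|^{q(y)}\le 2^{q(y)-1}(|a|^{q(y)}+|b|^{q(y)})\le 2^{q^{+}-1}(|a|^{q(y)}+|b|^{q(y)})$. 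As $h_{k}\to 2^{q^{+}}|w|^{q(y)}$ a.e., Fatou's lemma combined with $\rho_{q(y)}(w_{n_{k}})\to\rho_{q(y)}(w)$ yields $2^{q^{+}}\rho_{q(y)}(w)\le 2^{q^{+}}\rho_{q(y)}(w)-\limsup_{k}\rho_{q(y)}(w_{n_{k}}-w)$, which forces $\rho_{q(y)}(w_{n_{k}}-w)\to 0$ and contradicts the lower bound $\delta$.

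The main obstacle is exactly this last implication: modular convergence of the differences cannot be read off directly from $\rho_{q(y)}(w_{n})\to\rho_{q(y)}(w)$ together with convergence in measure, and it is the Brezis--Lieb/Fatou device---preceded by the passage from convergence in measure to a.e.\ convergence along a subsequence---that closes the gap. Throughout, one must be careful to apply the elementary convexity inequalities with the uniform exponent bounds $1<q^{-}\le q^{+}<+\infty$, which is precisely where the compactness of $\mathcal{M}$ and the hypothesis $q\in C^{+}(\mathcal{M})$ are used.
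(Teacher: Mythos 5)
Your proof is correct; note that the paper itself gives no argument for this proposition, which is imported verbatim from \cite{aberqi} as a preliminary, so there is no internal proof to compare against. Your chain of implications is exactly the standard Fan--Zhao-type argument used in the cited literature: the norm--modular comparison lemma (valid since $1<q^{-}\le q^{+}<+\infty$) for (i)$\Leftrightarrow$(ii), the Chebyshev estimate with $c_{\varepsilon}=\min\{\varepsilon^{q^{-}},\varepsilon^{q^{+}}\}$ plus the uniform convexity inequality for (ii)$\Rightarrow$(iii), and the Brezis--Lieb/Fatou device along an a.e.\ convergent sub-subsequence for (iii)$\Rightarrow$(ii), where your implicit use of $\rho_{q(y)}(w)<+\infty$ (guaranteed by $q^{+}<+\infty$ on the compact manifold) is what legitimizes the subtraction in the Fatou step.
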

 \begin{lem} (H\"older's  inequality, see \cite{bennouna}) For every $q \in  C^{+}(\mathcal{M}),$ 
 the following inequality  holds:   $$\displaystyle \vert\int_{\mathcal{M}}{v}(y)w(y)d v_{{g}}(y)\vert \leq \left( \frac{1}{q^{-}}+ \frac{1}{{q^{'}}^{-}}\right) |{v}|_{L^{q(y)}(\mathcal{M})}|w|_{L^{{q^{'}}(y)}(\mathcal{M})},$$
 $ \text{ for every } ({v}, w) \in  L^{q(y)}(\mathcal{M})\times  L^{{q^{'}}(y)}(\mathcal{M}),$ where $\frac{1}{q(y)}+ \frac{1}{{q^{'}}(y)}=1.$
 \end{lem}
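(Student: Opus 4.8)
The plan is to reduce the statement to the pointwise Young inequality together with the modular–norm relationship recorded in the preceding lemma. First I would dispose of the degenerate cases: if either $v$ or $w$ is the zero element of its space, both sides of the inequality vanish, so I may assume $|v|_{L^{q(y)}(\mathcal{M})}>0$ and $|w|_{L^{(q')(y)}(\mathcal{M})}>0$.

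Next I would normalize. Set $\bar v = v/|v|_{L^{q(y)}(\mathcal{M})}$ and $\bar w = w/|w|_{L^{(q')(y)}(\mathcal{M})}$, so that $|\bar v|_{L^{q(y)}(\mathcal{M})}=|\bar w|_{L^{(q')(y)}(\mathcal{M})}=1$. By property (iii) of the modular–norm lemma above, the equality of each Luxemburg norm to $1$ forces $\rho_{q(y)}(\bar v)=1$ and $\rho_{(q')(y)}(\bar w)=1$, i.e. both modulars equal $1$.

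The core step is the pointwise Young inequality: for a.e. $y\in\mathcal{M}$, with exponents $q(y),(q')(y)>1$ satisfying $\tfrac{1}{q(y)}+\tfrac{1}{(q')(y)}=1$,
$$|\bar v(y)\,\bar w(y)| \le \frac{|\bar v(y)|^{q(y)}}{q(y)} + \frac{|\bar w(y)|^{(q')(y)}}{(q')(y)} \le \frac{|\bar v(y)|^{q(y)}}{q^{-}} + \frac{|\bar w(y)|^{(q')(y)}}{(q')^{-}},$$
where the last inequality uses $q(y)\ge q^{-}$ and $(q')(y)\ge (q')^{-}$. Integrating against $dv_{g}$ over $\mathcal{M}$ and inserting $\rho_{q(y)}(\bar v)=\rho_{(q')(y)}(\bar w)=1$ yields
$$\int_{\mathcal{M}} |\bar v(y)\,\bar w(y)|\,dv_{g}(y) \le \frac{1}{q^{-}} + \frac{1}{(q')^{-}}.$$

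Finally I would undo the normalization by multiplying through by $|v|_{L^{q(y)}(\mathcal{M})}\,|w|_{L^{(q')(y)}(\mathcal{M})}$, and bound $\left|\int_{\mathcal{M}} v(y)w(y)\,dv_{g}(y)\right| \le \int_{\mathcal{M}} |v(y)w(y)|\,dv_{g}(y)$ by the triangle inequality for integrals, which delivers the claimed estimate. There is no substantial obstacle; the only points requiring care are the passage from each Luxemburg norm equal to $1$ to the corresponding modular equal to $1$ (this is exactly property (iii) of the earlier lemma, valid because $q^{+}<\infty$ on the compact manifold makes the modular finite and the defining infimum attained), and the replacement of the variable factors $1/q(y)$ and $1/(q')(y)$ by their uniform lower-exponent bounds $1/q^{-}$ and $1/(q')^{-}$, which is precisely what produces the explicit constant on the right-hand side.
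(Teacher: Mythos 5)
The paper does not prove this lemma at all: it is quoted from the reference \cite{bennouna} as a known fact, so there is no internal proof to compare against. Your argument is correct and is the standard proof of H\"older's inequality in variable exponent spaces: normalization to unit Luxemburg norm, the unit-ball property (norm $=1$ implies modular $=1$, which is exactly part (iii) of the modular--norm lemma and is legitimate here since $q^{+}<\infty$ on the compact manifold), the pointwise Young inequality $|\bar v(y)\bar w(y)|\le \frac{|\bar v(y)|^{q(y)}}{q(y)}+\frac{|\bar w(y)|^{q'(y)}}{q'(y)}$, and the uniform bounds $\frac{1}{q(y)}\le\frac{1}{q^{-}}$, $\frac{1}{q'(y)}\le\frac{1}{(q')^{-}}$, which is precisely where the constant $\frac{1}{q^{-}}+\frac{1}{(q')^{-}}$ comes from. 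The only cosmetic remark is that you do not even need the infimum in the Luxemburg norm to be attained: the inequality $\rho_{q(y)}(\bar v)\le 1$, which holds for the normalized function directly from the definition of the norm (or from part (iii)), already suffices to run the integration step, so your parenthetical worry about attainment can be dropped.
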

 \begin{lem} (Simon's  inequality, see  \cite{simon})   \label{simon}
 For every $\displaystyle y, z \in \mathbb{R}^{N}$, the following  holds:
 \begingroup\makeatletter\def\f@size{9}\check@mathfonts 
$$
\left\{\begin{array}{l}
\vert y-z \vert^{n} \leq c_{n}\left(\vert y\vert ^{n-2} y-\vert z\vert^{n-2} z\right) \cdot(y-z), \ n \geq 2 \\
\vert y-z\vert^{n} \leq C_{n}\left[\left(\vert y\vert^{n-2} y-\vert z\vert^{n-2} z\right) \cdot(y-z)\right]^{\frac{n}{2}}\left(\vert y\vert^{n}+\vert z\vert^{n}\right)^{\frac{2-n}{2}},1<n<2
\end{array}\right.
$$
\endgroup
 where $\displaystyle c_{n}=\left(\frac{1}{2}\right)^{-n}$ and $\displaystyle C_{n}=\frac{1}{n-1}$.    
\end{lem}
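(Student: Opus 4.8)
The plan is to treat both inequalities through a single device: the fundamental theorem of calculus applied to the vector field $F(\xi)=|\xi|^{n-2}\xi$ along the segment joining $z$ to $y$. Writing $h=y-z$ and $\xi_t=(1-t)z+ty$, I would first make two reductions. Both inequalities are homogeneous of degree $n$ under $(y,z)\mapsto(\lambda y,\lambda z)$, so I may normalize (e.g. $|h|=1$); and since $F(y),F(z),y,z$ all lie in $\Pi=\mathrm{span}\{y,z\}$ while every term in the statement is an inner product of these vectors, I may assume $N\le 2$. The key identity to record is
\[
\big(F(y)-F(z)\big)\cdot h=\int_0^1\Big(|\xi_t|^{n-2}|h|^2+(n-2)|\xi_t|^{n-4}(\xi_t\cdot h)^2\Big)\,dt,
\]
whose integrand is the quadratic form of the Jacobian $DF(\xi)=|\xi|^{n-2}I+(n-2)|\xi|^{n-4}\,\xi\otimes\xi$. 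Note that for $n\ge 2$ the stated inequality is equivalent to $(F(y)-F(z))\cdot h\ge 2^{-n}|h|^n$.

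For $n\ge 2$ I would use that the cross term is nonnegative, so the integrand is $\ge|\xi_t|^{n-2}|h|^2$ and hence $(F(y)-F(z))\cdot h\ge|h|^2\int_0^1|\xi_t|^{n-2}\,dt$. When $y,z$ are collinear this reduces to the scalar estimate: for $s\ge u\ge 0$ the superadditivity $s^{n-1}-u^{n-1}\ge(s-u)^{n-1}$ (valid since $n-1\ge 1$) already gives $(F(y)-F(z))\cdot h\ge|h|^n\ge 2^{-n}|h|^n$, and the opposite-sign subcase is handled similarly with room to spare. The delicate regime is $|y|\approx|z|$ with $y,z$ nearly antipodal but not collinear, where $\int_0^1|\xi_t|^{n-2}dt$ degenerates; there I would retain the cross term $(n-2)|\xi_t|^{n-4}(\xi_t\cdot h)^2$ --- which is large precisely in that regime --- and carry out a direct computation in polar coordinates on the plane $\Pi$ to recover the constant $c_n=2^{n}$.

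For $1<n<2$ I would instead bound the quadratic form from below using $(\xi_t\cdot h)^2\le|\xi_t|^2|h|^2$ together with $n-2<0$, which gives $\langle DF(\xi_t)h,h\rangle\ge(n-1)|\xi_t|^{n-2}|h|^2$. Since $n-2<0$ and $|\xi_t|\le|y|+|z|$ along the segment, I obtain $\int_0^1|\xi_t|^{n-2}dt\ge(|y|+|z|)^{n-2}$ and therefore
\[
\big(F(y)-F(z)\big)\cdot h\ \ge\ (n-1)|h|^2(|y|+|z|)^{n-2}.
\]
Rearranging, raising to the power $n/2$, multiplying by $(|y|+|z|)^{(2-n)n/2}$, and using the power-mean bound $|y|+|z|\le 2^{\,1-1/n}(|y|^n+|z|^n)^{1/n}$ to replace $|y|+|z|$ by $(|y|^n+|z|^n)^{1/n}$ then produces exactly the stated form $|h|^n\le C_n[(F(y)-F(z))\cdot h]^{n/2}(|y|^n+|z|^n)^{(2-n)/2}$, the factor $(n-1)$ being responsible for $C_n=1/(n-1)$ once the power-mean constants are tracked.

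I expect the main obstacle to be the recovery of the \emph{sharp} constants rather than the mechanism itself: the integral representation together with the pointwise Hessian bounds delivers both inequalities immediately up to a multiplicative constant, but each of the simple lower bounds degenerates exactly in the near-antipodal configuration $|y|\approx|z|$, $y\approx -z$. Establishing $c_n=2^n$ (and confirming that the power-mean factors collapse to $C_n=1/(n-1)$) is therefore the one genuinely technical point, and is where I would spend most of the effort, following the planar case analysis of Simon~\cite{simon}.
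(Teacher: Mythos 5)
First, note that the paper does not prove this lemma at all: it is quoted directly from Simon \cite{simon}, so there is no in-paper argument to compare against and your proposal must stand on its own. Its mechanism — the representation $(F(y)-F(z))\cdot h=\int_0^1\langle DF(\xi_t)h,h\rangle\,dt$ with $DF(\xi)=|\xi|^{n-2}I+(n-2)|\xi|^{n-4}\,\xi\otimes\xi$, plus homogeneity and reduction to $\mathrm{span}\{y,z\}$ — is sound, but both halves have concrete defects. For $1<n<2$ your constant bookkeeping provably fails: starting from your (correct) bound $(F(y)-F(z))\cdot h\ge(n-1)|h|^2(|y|+|z|)^{n-2}$, raising to the power $n/2$ and applying the power-mean step $|y|+|z|\le 2^{1-1/n}(|y|^n+|z|^n)^{1/n}$ yields the constant $2^{(n-1)(2-n)/2}(n-1)^{-n/2}$, and
$$
2^{(n-1)(2-n)/2}(n-1)^{-n/2}\cdot(n-1)=\bigl[2^{\,n-1}(n-1)\bigr]^{(2-n)/2}>1 \quad\text{whenever } 2^{\,n-1}(n-1)>1,
$$
i.e.\ for all $n\gtrsim 1.65$; so this route does \emph{not} deliver $C_n=1/(n-1)$, contrary to your closing claim. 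The repair is small but essential: bound $|\xi_t|\le\max(|y|,|z|)\le(|y|^2+|z|^2)^{1/2}$ instead of $|y|+|z|$, obtaining $(F(y)-F(z))\cdot h\ge(n-1)|h|^2(|y|^2+|z|^2)^{(n-2)/2}$, then use the constant-free subadditivity $(|y|^2+|z|^2)^{n/2}\le|y|^n+|z|^n$ in place of the power mean; since $(n-1)^{-n/2}\le(n-1)^{-1}$ for $1<n<2$, this gives exactly $C_n=1/(n-1)$.

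For $n\ge2$ the proof is not actually finished: the near-antipodal regime, which you correctly single out, is left as a ``direct computation in polar coordinates,'' and it cannot be waved away — at $y=-z$ the integral with the cross term dropped gives only $(F(y)-F(z))\cdot h\ge\frac{2^{2-n}}{n-1}|h|^n$, which is weaker than the required $2^{-n}|h|^n$ as soon as $n>5$, so even the paper's generous constant $c_n=2^n$ genuinely needs the cross term there, and that computation is precisely the crux you have not carried out. It is also avoidable: the elementary identity
$$
\bigl(F(y)-F(z)\bigr)\cdot(y-z)=\frac12\bigl(|y|^{n-2}+|z|^{n-2}\bigr)|y-z|^2+\frac12\bigl(|y|^{n-2}-|z|^{n-2}\bigr)\bigl(|y|^2-|z|^2\bigr),
$$
whose second term is nonnegative, combined with $\max(|y|,|z|)\ge|y-z|/2$ and $n-2\ge0$, gives $(F(y)-F(z))\cdot(y-z)\ge 2^{1-n}|y-z|^n$, i.e.\ the claim with $c_n=2^{n-1}\le 2^n$, with no case analysis at all. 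In summary: right mechanism, but a constant-tracking step that fails for $n$ near $2$, and an unexecuted computation at exactly the point where the $n\ge2$ argument is delicate.
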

  \subsection{Topological Degree Theory}
  Let  $E$  be a real separable Banach space and   $E^{\ast}$  its dual. Given a non-empty set $U \subset E,  $ denote by  $\bar{U}$ and by  $ \partial U $ its closure and boundary, respectively.
\begin{defn}(see \cite{brezis})
Let   $f: U \subset E \rightarrow E^{*}$  be an operator. 
\begin{itemize}
\item[1)]  We say that $ f$  is an $\left(S_{+}\right)$-map if  for 
$\{\{z_{n}\}_{n \in \mathbb{N}}, z\} \subset U,$  we have\\
   $$z_{n} \stackrel{weakly}\rightharpoonup z \,\,   \text{weakly  and} \,\,\, \displaystyle \limsup _{n \rightarrow \infty}\left\langle f z_{n}, z_{n}-z\right\rangle \leq 0 \,\,\,\Rightarrow\,\,\,    z_{n} \rightarrow z\cdot $$    
\item[2)] We say that  $f$ is a quasi-monotone  operator if  for every
$\{\{z_{n}\}_{n \in \mathbb{N}}, z\} \subset U,$  we have\\
   $$z_{n} \stackrel{weakly} \rightharpoonup z \Rightarrow \displaystyle \limsup _{n \rightarrow \infty}\left\langle f z_{n}, z_{n}-z\right\rangle \geq 0.$$
\end{itemize}
\end{defn}
\begin{defn}(Condition $\left(S_{+}\right)_{\mathfrak{B}}$, see \cite{kim})
 Suppose that  $U_{1} \subset E $ is such that $U \subset U_{1}, $   $\mathfrak{B}:U_{1}\to   E^{*} $ is a bounded operator,   and  $f:U \subset  E  \to   E $ is an operator. 
 \begin{itemize}
\item[1)]  We say that  $f$  satisfies condition   $\left(S_{+}\right)_{\mathfrak{B}}$ if for every $\{\{z_{n}\}_{n \in \mathbb{N}}, z\} \subset U,$ the following combined properties\\
 \begin{equation}
 \begin{cases}
  \displaystyle  z_{n} \stackrel{weakly}\rightharpoonup  z  \\ 
 \displaystyle   a_{n}= \mathfrak{B}(z_{n}) \stackrel{weakly}\rightharpoonup  a \\
  \displaystyle  \limsup _{n \rightarrow +\infty}  \left\langle f z_{n}, a_{n}-a\right\rangle \geq0
\end{cases}
 \end{equation}
imply $\displaystyle  z_{n} \rightarrow z.$ 
\item[2)](Property $(Q M)_{\mathfrak{B}}$).  We say that  $f$ satisfies condition  $(Q M)_{\mathfrak{B}}$   if for every $\{\{z_{n}\}_{n \in \mathbb{N}}, z\} \subset U,$ we have\\
  $$\displaystyle z_{n} \stackrel{weakly}\rightharpoonup z\,\,\text{and} \,\, a_{n}=\mathfrak{B} z_{n} \stackrel{weakly}\rightharpoonup a \Rightarrow \displaystyle \limsup _{n \rightarrow \infty}\left\langle f z_{n}, a-a_{n}\right\rangle \geq 0 \cdot$$
\end{itemize}
\end{defn}

We consider the following sets
\begingroup\makeatletter\def\f@size{8}\check@mathfonts 
$$ \mathcal{F}^{\star}_{0}(U)=\{g: U \subset E \rightarrow E^{*} \mid  \text{ g is    demi-continuous, of type } (S_{+}), \text{and is bounded} \}.$$
\endgroup
\begingroup\makeatletter\def\f@size{8}\check@mathfonts 
$$\mathcal{F}_{\mathfrak{B}, 1}(U)
=\left\{g: U \subset E \rightarrow E \mid g\right. \text{ is demi-continuous, bounded, of type}\left.\left(S_{+}\right)_{\mathfrak{B}}  \right\}.$$
\endgroup
\begingroup\makeatletter\def\f@size{8}\check@mathfonts 
  $$\mathcal{F}_{\mathfrak{B}}(U)=\left\{f:U \subset E \rightarrow E \mid f \right. \text{is demi-continuous and satisfies  condition} \left.\left(S_{+}\right)_{\mathfrak{B}}\right\}.$$
\endgroup  
Let $U \subset D_{f}$  and  $\mathfrak{B} \in \mathcal{F}^{\star}_{0}(U)$,  where $D_{f}$ denotes  the domain of $f$.
We denote by $\mathcal{N}$  the collection of all bounded open sets in $E$. The following operators will be considered
$$
\begin{aligned}
&\mathcal{F}_{S_{+}}(E)=\left\{f \in \mathcal{F}^{\star}_{0}(\overline{\omega}) : \omega\in \mathcal{N}\right\}, \\
&\mathcal{F}_{B}(E)=\left\{f \in \mathcal{F}_{\mathfrak{B}, 1}(\overline{\omega}) : \omega \in \mathcal{N}, \mathfrak{B} \in \mathcal{F}^{\star}_{0}(\overline{\omega})\right\}, \\
&\mathcal{F}(E)=\left\{f \in \mathcal{F}_{\mathfrak{B}}(\overline{\omega}) : \omega \in \mathcal{N}, \mathfrak{B} \in \mathcal{F}^{\star}_{0}(\overline{\omega})\right\}\cdot
\end{aligned}
$$
 
\begin{lem} (see \cite{kim})\label{le}
Let $\omega$ be a bounded open set in uniformly convex Banach space E,   $\mathfrak{B}: \overline{\omega} \rightarrow E^{*}$  a bounded operator, and  $f$:$\overline{\omega} \rightarrow E$. Then we have
\begin{itemize}
\item[1)]
 If $f$  is locally bounded and satisfies  condition $\left(S_{+}\right)_{\mathfrak{B}}$ and $\mathfrak{B}$ is continuous, then $f$ has the property $(Q M)_{\mathfrak{B}}$.
\item[2)] The operator $ f$ has the property $(Q M)_{\mathfrak{B}}$, 
if for
all
 $\{\{z_{n}\}_{n \in \mathbb{N}}, z\} \subset U$

  $$\quad z_{n} \rightharpoonup z \,\,  \text{and} \,\, \quad a_{n}= \mathfrak{B}z_{n} \rightharpoonup a  \,\, \Rightarrow \liminf \left\langle f z_{n}, a_{n}-a\right\rangle \geq 0.$$
\item[3)] If operators $f_{1}, f_{2}: \overline{\omega} \rightarrow E $  satisfy 
 $(Q M)_{\mathfrak{B}}$ condition, then  $f_{1}+f_{2}$ and $\alpha f_{1}$   also satisfy  $(Q M)_{\mathfrak{B}}$ condition, for every
 positive numbers $\alpha$.
\item[4)] Let  $f_1:\overline{\omega} \rightarrow E $ be an operator of the type $\left(S_{+}\right)_{\mathfrak{B}}$ 
and $f_2: \overline{\omega} \rightarrow E$   an  operator satisfying the property $(Q M)_{\mathfrak{B}}$. Then $f_1+f_2$ satisfies condition $\left(S_{+}\right)_{\mathfrak{B}}$.
\end{itemize}
\end{lem}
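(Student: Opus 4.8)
The plan is to treat Lemma~\ref{le} as four formal statements about the interaction of the conditions $(S_+)_{\mathfrak{B}}$ and $(QM)_{\mathfrak{B}}$, with part~1 carrying the only genuine analytic content and parts~2--4 following from it by elementary manipulations of $\liminf$ and $\limsup$. Throughout I fix a sequence $\{z_n\}\subset\overline{\omega}$ with $z_n\rightharpoonup z$, set $a_n:=\mathfrak{B}z_n$, assume $a_n\rightharpoonup a$, and use repeatedly the sign identity $\langle fz_n,\,a-a_n\rangle=-\langle fz_n,\,a_n-a\rangle$, so that the $\limsup$ of one pairing is the negative of the $\liminf$ of the other. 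Uniform convexity of $E$ plays no role in any of the four implications; it enters only later, in the construction of the degree.

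For part~1 I would argue by contradiction. Suppose $f$ fails $(QM)_{\mathfrak{B}}$ for some admissible sequence, i.e. $\limsup_n\langle fz_n,a-a_n\rangle<0$. By the sign identity this means $\liminf_n\langle fz_n,a_n-a\rangle>0$, hence a fortiori $\limsup_n\langle fz_n,a_n-a\rangle\geq0$. Since $z_n\rightharpoonup z$ and $a_n=\mathfrak{B}z_n\rightharpoonup a$, condition $(S_+)_{\mathfrak{B}}$ now yields $z_n\to z$ strongly in $E$. Continuity of $\mathfrak{B}$ then upgrades $a_n\rightharpoonup a$ to the strong convergence $a_n=\mathfrak{B}z_n\to\mathfrak{B}z$ in $E^{\ast}$, and uniqueness of weak limits forces $a=\mathfrak{B}z$. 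Finally, local boundedness of $f$ guarantees that $\{fz_n\}$ is bounded on a neighbourhood of $z$, so by the duality estimate $|\langle fz_n,a_n-a\rangle|\leq\|fz_n\|_E\,\|a_n-a\|_{E^{\ast}}\to0$. This contradicts $\liminf_n\langle fz_n,a_n-a\rangle>0$ and proves $(QM)_{\mathfrak{B}}$. I expect this to be the main obstacle: it is the only place where the three hypotheses (the $(S_+)_{\mathfrak{B}}$ mechanism, continuity of $\mathfrak{B}$, and local boundedness of $f$) must be combined, and one must check that the strong convergence of $z_n$ is exactly what collapses the pairing.

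Part~2 is then immediate: if $\liminf_n\langle fz_n,a-a_n\rangle\geq0$, then a fortiori $\limsup_n\langle fz_n,a-a_n\rangle\geq\liminf_n\langle fz_n,a-a_n\rangle\geq0$, which is exactly $(QM)_{\mathfrak{B}}$; no operator-theoretic input is required. For the additivity in part~3 I would avoid subadditivity of $\limsup$ (which gives an inequality in the wrong direction) and instead argue by extraction. Given $f_1,f_2$ with property $(QM)_{\mathfrak{B}}$ and an admissible sequence, suppose $\limsup_n\langle(f_1+f_2)z_n,a-a_n\rangle<0$. Passing to a subsequence realising this $\limsup$ and then, by boundedness, to a further subsequence along which $\langle f_1z_n,a-a_n\rangle\to\alpha$ and $\langle f_2z_n,a-a_n\rangle\to\beta$, we obtain $\alpha+\beta<0$. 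But $(QM)_{\mathfrak{B}}$ for each $f_i$ applies verbatim to this weakly convergent subsequence, forcing $\alpha\geq0$ and $\beta\geq0$, a contradiction. The scalar case $\alpha f_1$ with $\alpha>0$ follows at once from positive homogeneity of $\limsup$.

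Finally, part~4 combines both conditions by the same extraction. Let $z_n\rightharpoonup z$, $a_n\rightharpoonup a$ and $\limsup_n\langle(f_1+f_2)z_n,a_n-a\rangle\geq0$; the goal is $z_n\to z$. Choose a subsequence realising this $\limsup$ as a limit $L\geq0$, and a further subsequence along which $\langle f_1z_n,a_n-a\rangle\to\alpha$ and $\langle f_2z_n,a_n-a\rangle\to\beta$ with $\alpha+\beta=L$. Property $(QM)_{\mathfrak{B}}$ for $f_2$, applied along this subsequence, gives $\limsup\langle f_2z_n,a-a_n\rangle\geq0$, that is $\liminf\langle f_2z_n,a_n-a\rangle\leq0$; since the pairing converges to $\beta$ along the subsequence, this forces $\beta\leq0$. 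Hence $\alpha=L-\beta\geq L\geq0$, so $\limsup_n\langle f_1z_n,a_n-a\rangle\geq\alpha\geq0$, and condition $(S_+)_{\mathfrak{B}}$ for $f_1$ delivers $z_n\to z$, which is precisely $(S_+)_{\mathfrak{B}}$ for $f_1+f_2$. The only delicate bookkeeping is to ensure both pairings converge along a common subsequence and that $(QM)_{\mathfrak{B}}$ may be invoked along it, which is legitimate because the defining implication of $(QM)_{\mathfrak{B}}$ holds for every weakly convergent sequence, subsequences included.
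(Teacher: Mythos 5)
There is nothing in the paper to compare your argument against: Lemma~\ref{le} is stated as a quoted result from \cite{kim} and is never proved in the paper --- it is only invoked later, in the proof of Theorem~\ref{existence}. Judged on its own, your proof is essentially correct and reconstructs the standard arguments behind the cited result: part 1 by contradiction (the $(S_{+})_{\mathfrak{B}}$ mechanism gives strong convergence $z_n \to z$, continuity of $\mathfrak{B}$ upgrades $a_n \rightharpoonup a$ to $a_n \to a = \mathfrak{B}z$ in norm, and local boundedness of $f$ collapses the pairing to $0$); part 2 from $\liminf \le \limsup$; parts 3 and 4 by extracting subsequences along which the individual pairings converge and applying $(QM)_{\mathfrak{B}}$ to those subsequences. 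That last device is the genuinely important point: the naive route via subadditivity of $\limsup$ fails (two real sequences can each have nonnegative $\limsup$ while their sum has strictly negative $\limsup$), and the extraction is legitimate only because the defining implication of $(QM)_{\mathfrak{B}}$ quantifies over all admissible sequences, subsequences included --- which you correctly flag as the crux. Your remark that uniform convexity plays no role in these four implications is also accurate; it matters only for the degree construction elsewhere.

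Two caveats, both repairable. First, conventions: the paper's definitions are internally inconsistent (its $(S_{+})_{\mathfrak{B}}$ hypothesis reads $\limsup\langle f z_n, a_n - a\rangle \ge 0$ where \cite{kim} has $\le 0$, and its $(QM)_{\mathfrak{B}}$ is phrased with $\langle f z_n, a - a_n\rangle$ while part 2 of the lemma is phrased with $\langle f z_n, a_n - a\rangle$). You silently adopt one consistent normalization; that is the right practical choice, but you should state it explicitly, since as literally written your part 2 proves a statement whose hypothesis carries the opposite pairing from the one in the lemma. Second, in parts 3 and 4 you justify the extraction of convergent subsequences of $\langle f_i z_n, a - a_n\rangle$ ``by boundedness,'' but those parts do not assume $f_1, f_2$ bounded; instead, extract limits in the extended reals $[-\infty, +\infty]$ (the $(QM)_{\mathfrak{B}}$ and $(S_{+})_{\mathfrak{B}}$ conclusions then exclude the harmful infinite values), or first record that your part 2 criterion, applied to subsequences, forces each pairing to have nonnegative $\liminf$ and hence to be bounded below. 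With these two adjustments the proof is complete and self-contained, which is more than the paper itself provides.
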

\begin{lem}(see \cite{berkovits})
Let $B$ be a bounded open set in $E,$
 $\mathfrak{B} \in \mathcal{F}^{\star}_{0}(\overline{B})$  continuous, and $g: D_{g} \subset E^{*} \rightarrow E$  a  demi-continuous operator such that $\mathfrak{B}(\bar{B}) \subset D_{g}$. Then the following properties hold:
 \begin{itemize}
\item[a)] If ${g}$ is quasi-monotone operator, then\,\, $\displaystyle {I} +{g} \circ \mathfrak{B} \in \mathcal{F}_{\mathfrak{B}}(\bar{B})$, where  ${I}$ denotes the identity operator.
\item[b)] If ${g} $ is  an operator   of type $\left(S_{+}\right)$, then\,\, $ {g}\circ \mathfrak{B} \in \mathcal{F}_{\mathfrak{B}}(\overline{B})$.
\end{itemize}
\end{lem}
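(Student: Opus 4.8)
The plan is to verify separately the two defining attributes of the target classes — demi-continuity and the condition $(S_{+})_{\mathfrak{B}}$ — and to treat (a) and (b) in parallel, since both revolve around the composition $g\circ\mathfrak{B}$. Throughout I write $a_{n}=\mathfrak{B}z_{n}$ and use freely that weakly convergent sequences are norm-bounded. Demi-continuity is the routine part: if $z_{n}\to z$ in $E$, then continuity of $\mathfrak{B}$ gives $a_{n}\to a:=\mathfrak{B}z$ in $E^{*}$, and demi-continuity of $g$ gives $g(a_{n})\rightharpoonup g(a)$ in $E$; hence $g\circ\mathfrak{B}$ is demi-continuous, and since $I$ is continuous, so is $I+g\circ\mathfrak{B}$. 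The substance lies in the $(S_{+})_{\mathfrak{B}}$ verification, and the single algebraic identity driving both cases is
\[
\langle z_{n},a_{n}-a\rangle-\langle a_{n},z_{n}-z\rangle=\langle a_{n},z\rangle-\langle a,z_{n}\rangle\longrightarrow 0 ,
\]
which follows immediately from $z_{n}\rightharpoonup z$ in $E$ and $a_{n}\rightharpoonup a$ in $E^{*}$; thus $\langle z_{n},a_{n}-a\rangle=\langle a_{n},z_{n}-z\rangle+o(1)$.

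For (b) the composition collapses cleanly: since $(g\circ\mathfrak{B})z_{n}=g(a_{n})$, the pairing in the $(S_{+})_{\mathfrak{B}}$ hypothesis is exactly $\langle g(a_{n}),a_{n}-a\rangle$. Together with $a_{n}\rightharpoonup a$ this is precisely the hypothesis of the $(S_{+})$ property of $g$, so that $a_{n}\to a$ strongly in $E^{*}$. I would then split $\langle a_{n},z_{n}-z\rangle=\langle a_{n}-a,z_{n}-z\rangle+\langle a,z_{n}-z\rangle$: the first term tends to $0$ because $a_{n}-a\to 0$ strongly while $\{z_{n}-z\}$ stays bounded, and the second tends to $0$ by $z_{n}\rightharpoonup z$. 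Hence $\langle \mathfrak{B}z_{n},z_{n}-z\rangle=\langle a_{n},z_{n}-z\rangle\to 0$, and the $(S_{+})$ property of $\mathfrak{B}$ (available since $\mathfrak{B}\in\mathcal{F}^{\star}_{0}(\overline{B})$) forces $z_{n}\to z$. This gives $g\circ\mathfrak{B}\in\mathcal{F}_{\mathfrak{B}}(\overline{B})$.

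For (a) I would again reduce to the $(S_{+})$ property of $\mathfrak{B}$. Writing $(I+g\circ\mathfrak{B})z_{n}=z_{n}+g(a_{n})$ and inserting the identity above,
\[
\langle (I+g\circ\mathfrak{B})z_{n},\,a_{n}-a\rangle=\langle a_{n},z_{n}-z\rangle+\langle g(a_{n}),a_{n}-a\rangle+o(1).
\]
Set $A_{n}=\langle a_{n},z_{n}-z\rangle$ and $B_{n}=\langle g(a_{n}),a_{n}-a\rangle$, so the hypothesis reads $\limsup_{n}(A_{n}+B_{n})\le 0$, while I need $\limsup_{n}A_{n}\le 0$ in order to invoke the $(S_{+})$ property of $\mathfrak{B}$. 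The crux is a subsequence extraction: choose $n_{k}$ with $A_{n_{k}}\to\alpha:=\limsup_{n}A_{n}$ (finite, as $\{A_{n}\}$ is bounded); along it $\limsup_{k}B_{n_{k}}=\limsup_{k}(A_{n_{k}}+B_{n_{k}})-\alpha\le-\alpha$, whereas quasi-monotonicity of $g$ — which holds along every subsequence, since $a_{n_{k}}\rightharpoonup a$ — gives $\limsup_{k}B_{n_{k}}\ge 0$. Combining yields $\alpha\le 0$, i.e. $\limsup_{n}\langle\mathfrak{B}z_{n},z_{n}-z\rangle\le 0$, and the $(S_{+})$ property of $\mathfrak{B}$ gives $z_{n}\to z$, so $I+g\circ\mathfrak{B}\in\mathcal{F}_{\mathfrak{B}}(\overline{B})$.

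The main obstacle is exactly this $\limsup$/$\liminf$ bookkeeping in part (a): quasi-monotonicity controls $\limsup B_{n}$ only from below, so one cannot naively split $\limsup(A_{n}+B_{n})$; the subsequence passage, together with the stability of quasi-monotonicity under extraction of subsequences, is what closes the gap. I would carry out all the pairing estimates with the orientation $\limsup\langle fz_{n},a_{n}-a\rangle\le 0$ in condition $(S_{+})_{\mathfrak{B}}$, which is the orientation consistent with the $(S_{+})$ definition of $g$ exploited in (b); boundedness of $\{z_{n}\}$ and $\{a_{n}\}$ coming from weak convergence is used tacitly in every cross-term estimate.
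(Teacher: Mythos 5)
Your proposal is correct, and since the paper gives no proof of this lemma (it is quoted verbatim from Berkovits \cite{berkovits}), the right comparison is with the cited source: your argument is essentially that standard one, resting on the identity $\langle z_{n},a_{n}-a\rangle=\langle a_{n},z_{n}-z\rangle+o(1)$, on upgrading $a_{n}\rightharpoonup a$ to strong convergence via the $(S_{+})$ property of $g$ in part (b), and on the subsequence extraction that reconciles $\limsup_{n}(A_{n}+B_{n})\le 0$ with the lower bound $\limsup_{k}B_{n_{k}}\ge 0$ from quasi-monotonicity in part (a). You were also right to work with the orientation $\limsup_{n}\langle fz_{n},a_{n}-a\rangle\le 0$ in the definition of $(S_{+})_{\mathfrak{B}}$: the ``$\ge 0$'' printed in the paper's definition is a typo relative to \cite{kim} and \cite{berkovits}, and with the literal sign part (b) could not be deduced from the $(S_{+})$ property of $g$ at all.
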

\begin{defn}(see \cite{brouwer})
Let $\displaystyle B\subset E$  be a  bounded open set, $\mathfrak{B} \in \mathcal{F}^{\star}_{0}(\overline{B})$ continuous, 
 and  $\displaystyle f, g\,\, \in \mathcal{F}_{\mathfrak{B}}(\overline{E})$. Then the map   $\displaystyle {H}:[0,1] \times \overline{E} \rightarrow E$ given  by
$$
{H}(s, w)=(1-s) fw+  sg w,~ \text{ for every  }~ (s, w) \in[0,1] \times \overline{B},
$$
is called an admissible affine homotopy. 
\end{defn}
\begin{lem} (see \cite{berkovits})
Let $B\subset E$  be a  bounded open set, $\mathfrak{B} \in \mathcal{F}^{\star}_{0}(\overline{B})$ continuous, and ${ f}, { g} \in \mathcal{F}_{\mathfrak{B}}(\overline{E})$. 
Then the homotopy $ {H}(s,\cdot)$ satisfies   condition $\left(S_{+}\right)_{\overline{B}}.$
 \end{lem}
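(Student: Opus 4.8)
The plan is to reduce the claim to the algebraic closure properties of the classes $(S_+)_{\mathfrak{B}}$ and $(QM)_{\mathfrak{B}}$ collected in Lemma~\ref{le}, exploiting that for each $s$ the affine homotopy $H(s,\cdot)=(1-s)f+sg$ is a convex combination of two maps of class $\mathcal{F}_{\mathfrak{B}}(\overline{B})$.

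First I would record that both $f$ and $g$ satisfy property $(QM)_{\mathfrak{B}}$. Indeed, $f,g\in\mathcal{F}_{\mathfrak{B}}(\overline{B})$ are demi-continuous, hence locally bounded: were $f$ unbounded near some point, there would exist $z_n\to z$ strongly with $\|fz_n\|\to\infty$, contradicting $fz_n\rightharpoonup fz$, since a weakly convergent sequence is bounded. As $\mathfrak{B}$ is continuous and $f,g$ are of type $(S_+)_{\mathfrak{B}}$, Lemma~\ref{le}(1) then yields that $f$ and $g$ both enjoy property $(QM)_{\mathfrak{B}}$.

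Next I would fix $s\in[0,1]$ and distinguish two cases. For $s\in[0,1)$ write $H(s,\cdot)=(1-s)f+sg$: the coefficient $1-s$ is strictly positive, so $(1-s)f$ is again of type $(S_+)_{\mathfrak{B}}$ because the defining $\limsup$ inequality is invariant under multiplication by a positive scalar; meanwhile $sg$ satisfies $(QM)_{\mathfrak{B}}$ by Lemma~\ref{le}(3), trivially so when $s=0$. Applying Lemma~\ref{le}(4) with the type-$(S_+)_{\mathfrak{B}}$ summand $(1-s)f$ and the quasi-monotone summand $sg$ shows that $H(s,\cdot)$ satisfies $(S_+)_{\mathfrak{B}}$. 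For $s=1$ one simply has $H(1,\cdot)=g$, which is of type $(S_+)_{\mathfrak{B}}$ by hypothesis. This settles the condition for every fixed parameter.

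Finally, for the homotopy version, where the parameter is allowed to move, $t_n\to t_0$ together with $z_n\rightharpoonup z$, $a_n=\mathfrak{B}z_n\rightharpoonup a$ and $\limsup_n\langle H(t_n,z_n),a_n-a\rangle\le 0$, I would pass to subsequences. Splitting $\langle H(t_n,z_n),a_n-a\rangle=(1-t_n)\langle fz_n,a_n-a\rangle+t_n\langle gz_n,a_n-a\rangle$, I select a subsequence realizing $\limsup_n\langle fz_n,a_n-a\rangle$ and then a further subsequence along which $\langle gz_n,a_n-a\rangle$ converges; the quasi-monotonicity of $f$ and $g$ forces both limiting quantities to be nonnegative. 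Passing to the limit in the split identity and using $t_n\to t_0$ with $1-t_0>0$ then forces $\limsup_n\langle fz_n,a_n-a\rangle\le 0$, whence $z_n\to z$ by the $(S_+)_{\mathfrak{B}}$ property of $f$; the endpoint $t_0=1$ is handled symmetrically through $g$. I expect this subsequence bookkeeping with the moving coefficient $t_n$ to be the only genuinely delicate point, since the fixed-parameter statement is an immediate consequence of the closure properties in Lemma~\ref{le}.
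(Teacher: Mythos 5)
Your proposal is correct, but note that the paper itself offers no proof of this lemma: it is imported verbatim from Berkovits \cite{berkovits} (via Kim--Hong \cite{kim}), so there is no internal argument to compare against. Judged on its own merits, your reconstruction is sound and follows the same route as the cited literature: for fixed $s$, demi-continuity gives local boundedness (your weak-convergence argument for this is fine), Lemma~\ref{le}(1) upgrades $f$ and $g$ to $(QM)_{\mathfrak{B}}$, and Lemma~\ref{le}(3)--(4) applied to the decomposition $(1-s)f + sg$ (with the roles of $f$ and $g$ swapped at $s=1$) yields $(S_+)_{\mathfrak{B}}$; this is exactly how the closure properties are meant to be used. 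You are also right that the moving-parameter case $t_n \to t_0$ is the genuine content of the homotopy statement, and your subsequence bookkeeping works, with two small points worth making explicit. First, to conclude that the limit of $\langle g z_{n_k}, a_{n_k}-a\rangle$ along your sub-subsequence is nonnegative, you must apply $(QM)_{\mathfrak{B}}$ \emph{to that subsequence}; this is legitimate because the hypotheses $z_{n_k}\rightharpoonup z$, $\mathfrak{B}z_{n_k}\rightharpoonup a$ pass to subsequences, but it deserves a sentence. Second, before multiplying by $1-t_0>0$ you should rule out $\limsup_n\langle f z_n, a_n-a\rangle = +\infty$; this follows from the split identity, since an infinite limit together with the nonnegative $g$-term would contradict $\limsup_n\langle H(t_n,z_n),a_n-a\rangle \le 0$. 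Finally, be aware that you have silently corrected the paper's garbled sign conventions: as printed, the definition of $(S_+)_{\mathfrak{B}}$ has $\limsup\langle f z_n, a_n-a\rangle \ge 0$ where the source \cite{kim} has $\le 0$, and $(QM)_{\mathfrak{B}}$ has the pairing reversed; your reading (the standard one) is the only one under which the lemma, and indeed Lemma~\ref{le}, is true, so this is the right call, but in a referee-facing write-up you should flag the discrepancy rather than fix it tacitly.
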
 

\begin{thm}(see \cite{berkovits})\label{th}  There exists a unique degree function
$$
d:\left\{(f, F, a): F \in \mathcal{N},  \mathfrak{B}\in \mathcal{F}^{\star}_{0}(\overline{B}), f \in \mathcal{F}_{\mathfrak{B},1}(\overline{E}), a \notin f(\partial E)\right\} \to  \mathbb{Z}
$$
satisfying the following properties:
\begin{itemize}
\item[1)]   
If  $a \in F$, then  $d({I}, F, a)=1$.
\item[2)] If $G:[0,1] \times \overline{B} \rightarrow F$ is a bounded admissible affine homotopy with a common continuous essential inner map and $b:[0,1] \rightarrow F$ is a continuous mapping in $E$, then
$d(G(x, \cdot), F, b(x))$ is constant for every  $x \in[0,1]$ and $b(x) \notin G(t, \partial F)$.
\item[3)]  
If $F_{1}$ and $F_{2}$ are disjoint open subsets of E with $a \notin f\left(\overline{F} \backslash\left(E_{1} \cup E_{2}\right)\right)$, then 
$$
d(f, F, a)=d\left(f, F_{1}, a\right)+d\left(f, F_{2}, a\right).$$
\item[4)] If $d(f, F, a) \neq 0$, then $fu=a$ has a solution in F.
\end{itemize}
\end{thm}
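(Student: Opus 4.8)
The plan is to construct $d$ in three layers and then establish uniqueness by reducing to the base layer. The base layer is the Brouwer degree in finite dimensions, whose existence, the four axioms, and \emph{uniqueness} I take as known \cite{brouwer}. The middle layer is the Browder--Skrypnik degree $\deg_{S_+}(T,F,a)$ for a bounded demicontinuous operator $T:\overline F\subset E\to E^{*}$ of type $(S_+)$ with $a\in E^{*}\setminus T(\partial F)$: since $E$ is separable and reflexive, I would fix a Galerkin sequence $E_1\subset E_2\subset\cdots$ of finite-dimensional subspaces with $\overline{\bigcup_n E_n}=E$, set $F_n=F\cap E_n$, and define finite-dimensional approximations $T_n:\overline{F_n}\to E_n$ by $T_n=\Phi_n\circ j_n^{*}\circ T|_{\overline{F_n}}$, where $j_n:E_n\hookrightarrow E$ and $\Phi_n:E_n^{*}\to E_n$ is a fixed linear isomorphism, then put $\deg_{S_+}(T,F,a):=\lim_{n\to\infty}\deg_{\mathrm{Brouwer}}(T_n,F_n,\Phi_n j_n^{*}a)$. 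The top layer is the degree required by the theorem: for $f\in\mathcal{F}_{\mathfrak{B},1}(\overline F)$ with reference map $\mathfrak{B}\in\mathcal{F}^{\star}_{0}(\overline F)$ continuous and $a\in E\setminus f(\partial F)$, I would reduce $d(f,F,a)$ to the middle layer through the inner map $\mathfrak{B}$, using the composition and homotopy lemmas of \cite{berkovits} recalled above.

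The crux of the construction, and the step I expect to be hardest, is showing that the limit defining $\deg_{S_+}$ exists and is independent of the Galerkin sequence. I would argue by contradiction: if the integers $\deg_{\mathrm{Brouwer}}(T_n,F_n,\cdot)$ failed to stabilize, or if two admissible Galerkin sequences produced different stable values, then a connecting finite-dimensional affine homotopy would cease to be admissible along a subsequence, yielding points $u_{n_k}$ with $u_{n_k}\rightharpoonup u$ and $\limsup_{k}\langle T u_{n_k},u_{n_k}-u\rangle\le 0$. The $(S_+)$ property then forces $u_{n_k}\to u$, and demicontinuity gives $Tu=a$ with $u\in\partial F$, contradicting $a\notin T(\partial F)$. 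The same stabilization mechanism, now driven by the hypothesis $(S_+)_{\mathfrak{B}}$ and by weak convergence of both $u_n$ and $\mathfrak{B}u_n$, is what makes the top layer well defined and uniform in a homotopy parameter.

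For the top layer I would first treat operators of representable form, $f=g\circ\mathfrak{B}$ with $g$ of type $(S_+)$ (or $f=I+g\circ\mathfrak{B}$ with $g$ quasi-monotone), as supplied by the composition lemma, defining $d(f,F,a)$ by a product formula that combines the middle-layer degree of the inner map $\mathfrak{B}$ with the degree of the outer map $g$ at $a$; a general $f\in\mathcal{F}_{\mathfrak{B},1}(\overline F)$ is then reached through the admissible affine homotopy $H(s,\cdot)=(1-s)f+s f_0$ to a representable $f_0$, which stays of type $(S_+)_{\mathfrak{B}}$ by the homotopy lemma, so its value is forced by homotopy invariance of the middle layer. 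Well-definedness requires independence of the representation $g$ and of the reference map $\mathfrak{B}$ within its admissible homotopy class, again via middle-layer homotopy invariance. Properties (1)–(3) then descend through the two limits: normalization (1) from $\deg_{\mathrm{Brouwer}}(I,F_n,a)=1$ when $a\in F$; additivity (3) from Brouwer additivity together with the absence, for large $n$, of approximate solutions on $\overline F\setminus(F_1\cup F_2)$; and homotopy invariance (2) from the uniform admissibility established above. For the solution property (4), $d(f,F,a)\neq0$ forces $\deg_{\mathrm{Brouwer}}(f_n,F_n,a_n)\neq0$ for all large $n$, hence a finite-dimensional solution $u_n$; boundedness of $f$ and reflexivity give $u_n\rightharpoonup u$ along a subsequence, the $(S_+)_{\mathfrak{B}}$ condition upgrades this to $u_n\to u$, and demicontinuity yields $fu=a$ with $u\in F$ since $a\notin f(\partial F)$.

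Finally, for uniqueness I would suppose $d'$ is any integer-valued function on the same domain satisfying (1)–(4) and show $d'=d$. Using homotopy invariance (2) I would connect an arbitrary admissible triple $(f,F,a)$, via the same representation-and-homotopy reduction as above, to an identity-type normal configuration where normalization (1) pins the value down, using additivity (3) to split the case in which solutions are distributed among several components. All remaining freedom lives in the finite-dimensional approximations, where the classical uniqueness of the Brouwer degree forces $d'$ and $d$ to agree on each level; passing to the limit gives $d'=d$ on the whole domain, completing the proof.
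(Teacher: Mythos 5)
You should first be aware that the paper contains no proof of this theorem: it is imported verbatim from Berkovits \cite{berkovits} (see also \cite{kim}) as a black-box tool, so there is no in-paper argument to compare yours against; your proposal has to stand on its own. Its skeleton --- Brouwer degree, then the Browder--Skrypnik $(S_+)$-degree by Galerkin approximation, then the extension to $(S_+)_{\mathfrak{B}}$-maps through the essential inner map --- is indeed the standard route, and your stabilization-by-$(S_+)$ contradiction argument is the right engine for well-definedness. But two steps are genuinely broken. First, the ``product formula'' defining $d(g\circ\mathfrak{B},F,a)$ does not exist in this setting: $\mathfrak{B}$ maps $E$ to $E^{*}$ and $g$ maps $E^{*}$ to $E$, and a Leray-type composition theorem would require a degree of $g$ on the connected components of the complement of $\mathfrak{B}(\partial F)$, which is not defined for this class and is not available for unbounded multiplicity-free $(S_+)$-maps. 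Berkovits avoids any product: the degree of $g\circ\mathfrak{B}$ (and of $I+g\circ\mathfrak{B}$ with $g$ quasi-monotone) is defined \emph{directly} by the same Galerkin/stabilization mechanism, now driven by weak convergence of both $u_n$ and $\mathfrak{B}u_n$ --- i.e., your middle-layer construction \emph{is} the top-layer definition, and the composition lemma only certifies membership in the admissible class. A smaller but real defect in the middle layer: with an arbitrary isomorphism $\Phi_n:E_n^{*}\to E_n$ the Brouwer degree of $\Phi_n\circ j_n^{*}\circ T$ is only defined up to the orientation class of $\Phi_n$, so the $\Phi_n$ must be chosen coherently (e.g., via a fixed basis of $E_n$ and its dual pairing), otherwise the limit can flip sign between levels.

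The uniqueness argument has a second genuine gap. You propose to connect an arbitrary admissible triple $(f,F,a)$ to an identity-type configuration by an admissible affine homotopy and invoke normalization; but such a homotopy cannot exist whenever $d(f,F,a)\neq 1$, since property (2) would then force the value $1$ --- so the reduction only ever reaches triples of degree one. Uniqueness must instead be extracted from the axioms alone, Amann--Weiss style: additivity (3) localizes the computation near the solution set, normalization (1) pins the value on the generating configurations, and the decisive step is to show that for any abstract degree $d'$ satisfying (1)--(3) and any admissible $(f,F,a)$, the affine homotopy from $f$ to a suitable representable (or finite-dimensionally resolved) approximation is \emph{admissible} --- which is exactly the $(S_+)_{\mathfrak{B}}$ boundary-avoidance argument you deploy only for well-definedness of $d$. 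Your closing claim that ``passing to the limit gives $d'=d$'' silently assumes $d'$ is computed by Galerkin limits, but $d'$ is merely an integer-valued function satisfying the axioms; without the admissible-homotopy bridge at each finite level, the uniqueness of the Brouwer degree says nothing about $d'$.
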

\section{Fractional Sobolev Spaces with  a General Kernel on Compact Riemannian Manifolds}\label{framework}
In this section, we shall introduce fractional Sobolev spaces with a general kernel and prove several qualitative lemmas.
\begin{defn}
Suppose that $(\mathcal{M} , {g})$  is a Riemannian compact manifold of dimension N,  $\mathcal{U}$ is a smooth open subset of $\mathcal{M},$  $K:\mathcal{U}    \times  \mathcal{U} \to(0, +\infty) $ is a symmetric function satisfying   Lévy-integrability and coercivity  conditions,
$  q\in C^{+}(\mathcal{M}), $ and  $p:\mathcal{U}    \times  \mathcal{U} \to (1, +\infty)$ satisfying conditions  (\ref{l3})-(\ref{l30}). We define  fractional Sobolev space 
$W_{K}^{q(y), p(y,z)}(\mathcal{U})$
with  general kernel 
$K(y,z)$ on  compact manifold $\mathcal{M}$ 
 as the set of all measurable functions $w \in L^{q(y)}(\mathcal{U})$ such that $\displaystyle \int_{\mathcal{M}^2}\frac{|w(y)-w(z)|^{p(y,z)}}{\lambda^{p(y,z)}}  K(y, z) d v_{{g}}(y) d v_{{g}}(z)<\infty, \text { for some } \lambda>0$ 
and endow it with the natural norm $$\|w\|_{K}^{q(y), p(y,z)}(\mathcal{U})=[w]_{K, p(y,z)}+|w|_{q(y)},$$
where 
$$[w]_{K , p(y,z)}=  \inf \left\{\lambda>0 : \int_{\mathcal{M}^2} \frac{|w(y)-w(z)|^{p(y,z)}}{\lambda^{p(y,z)}} K(y, z)  d v_{{g}}(y) d v_{{g}}(z)<1\right\},
$$ is the  Gagliardo seminorm of ${u}$ and   $(L^{q(y)}(\mathcal{U}), |.|_{q(y)})$ is a variable exponent Lebesgue space.\end{defn}
\begin{lem}\label{completness}
Suppose that $(\mathcal{M} , {g})$  is a Riemannian compact manifold of dimension N,  $\mathcal{U}$ is a smooth open subset of $\mathcal{M},$  $K:\mathcal{U}    \times  \mathcal{U} \rightarrow (0, +\infty) $ is a  symetric function satisfying  Lévy-integrability and coercivity  conditions,   $q\in C^{+}(\mathcal{M}), $ and  $p:\mathcal{U}    \times  \mathcal{U} \rightarrow (1, +\infty)$ satisfies conditions  (\ref{l3})-(\ref{l30}). Then  $( W_{K}^{q(y), p(y,z)}(\mathcal{U}), ||\cdot||)$  is a Banach space.
\end{lem}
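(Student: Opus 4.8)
The plan is to follow the classical scheme for proving completeness of Gagliardo-type spaces, adapted here to the variable exponent and the kernel weight. First I would check that $\|\cdot\|$ is genuinely a norm: the Gagliardo term $[\cdot]_{K,p(y,z)}$ is a Luxemburg-type functional, so its subadditivity and positive homogeneity follow from the convexity of $t\mapsto t^{p(y,z)}$ exactly as for variable exponent Lebesgue norms, while $|\cdot|_{q(y)}$ is already a norm on $L^{q(y)}(\mathcal{U})$; positivity of the sum is immediate since $|w|_{q(y)}=0$ forces $w=0$ a.e. The vector space structure is routine.

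The heart of the argument is completeness. Let $\{w_n\}$ be a Cauchy sequence in $W_{K}^{q(y), p(y,z)}(\mathcal{U})$. Since $|w_n-w_m|_{q(y)}\le \|w_n-w_m\|$, the sequence is Cauchy in the complete space $L^{q(y)}(\mathcal{U})$, so there is $w\in L^{q(y)}(\mathcal{U})$ with $w_n\to w$ in $L^{q(y)}(\mathcal{U})$; passing to a subsequence I may assume $w_n\to w$ almost everywhere on $\mathcal{U}$ (extending all functions by zero on $\mathcal{M}\setminus\mathcal{U}$ so that the double integrals over $\mathcal{M}^2$ are meaningful). It then remains to show that $w\in W_{K}^{q(y), p(y,z)}(\mathcal{U})$ and that $[w_n-w]_{K,p(y,z)}\to 0$.

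For the seminorm convergence, fix $\varepsilon>0$ and choose $N_0$ so that $[w_n-w_m]_{K,p(y,z)}<\varepsilon$ for all $m,n\ge N_0$. By the defining relation between the Luxemburg seminorm and its modular (monotonicity in $\lambda$ of $t\mapsto t^{p(y,z)}$), this yields
$$\int_{\mathcal{M}^2}\frac{|(w_n-w_m)(y)-(w_n-w_m)(z)|^{p(y,z)}}{\varepsilon^{p(y,z)}}K(y,z)\,dv_{g}(y)\,dv_{g}(z)\le 1.$$
Now I would fix $n\ge N_0$ and let $m\to\infty$ along the almost-everywhere convergent subsequence: the integrand converges pointwise a.e. on $\mathcal{M}^2$ to the corresponding expression built from $w_n-w$, so Fatou's lemma gives
$$\int_{\mathcal{M}^2}\frac{|(w_n-w)(y)-(w_n-w)(z)|^{p(y,z)}}{\varepsilon^{p(y,z)}}K(y,z)\,dv_{g}(y)\,dv_{g}(z)\le 1,$$
whence $[w_n-w]_{K,p(y,z)}\le\varepsilon$ for every $n\ge N_0$. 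In particular $w_n-w\in W_{K}^{q(y), p(y,z)}(\mathcal{U})$, so $w=w_n-(w_n-w)$ lies in the space, and combining with $|w_n-w|_{q(y)}\to 0$ we obtain $\|w_n-w\|\to 0$. Since a Cauchy sequence with a convergent subsequence converges to the same limit, the whole sequence converges, proving completeness.

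The step I expect to be the main obstacle is the rigorous passage to the limit in the double modular integral: one must justify the almost-everywhere convergence of the difference $(w_n-w_m)(y)-(w_n-w_m)(z)$ on the product $\mathcal{M}^2$ rather than merely on $\mathcal{U}$, and confirm that Fatou's lemma applies to the nonnegative weighted integrand with the variable exponent appearing inside the integral. A conceptually cleaner alternative I might adopt is to realize $W_{K}^{q(y), p(y,z)}(\mathcal{U})$ as an isometric copy of a closed subspace of the product $L^{q(y)}(\mathcal{U})\times L^{p(y,z)}(\mathcal{M}^2,\,K\,dv_{g}\,dv_{g})$ via the map $w\mapsto(w,\,w(y)-w(z))$, and then deduce completeness from that of the two (weighted) variable exponent Lebesgue factors; the Fatou computation above is precisely what shows that the image of this embedding is closed.
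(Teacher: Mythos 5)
Your proof is correct and follows essentially the same route as the paper's: completeness of $L^{q(y)}(\mathcal{U})$ yields an a.e.\ convergent subsequence, and Fatou's lemma applied to the weighted modular gives both membership of the limit function in the space and convergence of the Gagliardo seminorm. If anything, your passage from the modular bound at level $\varepsilon$ to $[w_n-w]_{K,p(y,z)}\le\varepsilon$ via the Luxemburg definition is more careful than the paper's direct Fatou estimate on the norms themselves.
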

\begin{proof}
Let $\{ w_{n}\}_{n\in \mathbb{N}}$ be a Cauchy sequence  in $W_{K}^{q(y), p(y,z)}(\mathcal{U}).$ For any $\varepsilon>0,$  there exists $N_{\varepsilon}\geq 0$,  such that  for every $n, m\in \mathbb{N},$  $n, m\geq  N_{\varepsilon},$
\begin{equation}\label{k30} |w_{n}-w_{m}|_{q(y)}\leq \|w_{n}-w_{m}\|_{W_{K}^{q(y), p(y,z)}(\mathcal{U})}\leq \varepsilon.  
\end{equation}

Since $(L^{q(y)}(\mathcal{U}), |.|_{q(y)})$ is a Banach space, there exists $w\in  L^{q(y)}(\mathcal{U})$  such that $w_{n}\rightarrow w $  strongly  in $L^{q(y)}(\mathcal{U})$ as $n \rightarrow +\infty. $ Thanks to the converse of the Dominated Convergence Theorem, it follows that for a subsequence still denoted  $\{w_{n}\},$ we have that $w_{n} \rightarrow w $ as  $n \rightarrow +\infty $ a.e on $\mathcal{U}.$ 

Let   $\ell$ be an integer such that $\ell \geq  N_{\varepsilon}$ and $w_{\ell}\in  W_{K}^{q(y), p(y,z)}(\mathcal{U}).$ We use Fatou's lemma  and inequality \eqref{k30} to get
\begin{align*}
&\int_{\mathcal{U} \times \mathcal{U}}|w(y)-w(z)|^{p(y,z)} K(y, z) d v_{{g}}(y) d v_{{g}}(z) \\
&\leq  \liminf _{n \rightarrow +\infty}  \int_{\mathcal{U} \times \mathcal{U}}|w_{n}(y)-w_{n}(z)|^{p(y,z)} K(y, z) d v_{{g}}(y) d v_{{g}}(z)\\
&\leq 2^{p^{+}-1} \liminf _{n \rightarrow +\infty}(|w_{k}(y)-w_{\ell}(y)-(w_{k}(z)-w_{\ell}(z)|^{p(y,z)}K(y, z)\\
&+   |w_{\ell}(y)-w_{\ell}(z)|^{p(y,z)}      K(y, z) )d v_{{g}}(y) d v_{{g}}(z) \\
&\leq 2^{p^{+}} \liminf _{n \rightarrow +\infty} (   \|w_{\ell}-w_{k}\|^{p^{+}}_{W_{K}^{q(y), p(y,z)}(\mathcal{U})}+  \|w_{\ell}-w_{k}\|^{p^{-}}_{W_{K}^{q(y), p(y,z)}(\mathcal{U})}\\
&+\|w_{\ell}\|^{p^{-}}_{W_{K}^{q(y), p(y,z)}(\mathcal{U})}+  \|w_{\ell}\|^{p^{+}}_{W_{K}^{q(y), p(y,z)}(\mathcal{U}))} < +\infty.
\end{align*}

Thus,   $w \in  W_{K}^{q(y), p(y,z)}(\mathcal{U}).$  We combine with Fatou's lemma and  inequality~\eqref{k30}, and obtain  $$ \|w_{n}-w\|^{p^{-}}_{W_{K}^{q(y), p(y,z)}(\mathcal{U})} \leq  \liminf _{k \rightarrow +\infty}  \|w_{n}-w_{k}\|^{p^{-}}_{W_{K}^{q(y), p(y,z)}}(\mathcal{U})\leq \varepsilon.$$ 

 That is,   $w_{n}\rightarrow w $ in $W_{K}^{q(y), p(y,z)}(\mathcal{U})$ as $n \rightarrow +\infty$.
\end{proof}
\begin{lem}\label{unif-convex}
 Suppose that $(\mathcal{M} , {g})$  is a compact Riemannian manifold with $\dim  \mathcal{M}=N,$ 
 $\mathcal{U}$ is  a smooth open subset of $\mathcal{M},$  $K:\mathcal{U}\times \mathcal{U} \rightarrow (0, +\infty) $ is a symmetric function satisfying  Lévy-integrability and coercivity  conditions,   $q\in C^{+}(\mathcal{M}),$ and  $p:\mathcal{U}\times\mathcal{U} \rightarrow (1, +\infty)$ satisfies conditions  \eqref{l3}-\eqref{l30}. Then  $( W_{K}^{q(y), p(y,z)}(\mathcal{U}), ||\cdot||) $ is a  uniformly convex space.  
\end{lem}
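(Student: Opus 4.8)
The plan is to deduce uniform convexity of the norm from a uniform convexity property at the level of the two underlying modulars, exactly as in the classical treatment of variable exponent Lebesgue spaces. Write $\rho_q(w)=\int_{\mathcal U}|w(y)|^{q(y)}\,dv_{g}(y)$ for the Lebesgue modular and $\rho_{K,p}(w)=\int_{\mathcal M^2}|w(y)-w(z)|^{p(y,z)}K(y,z)\,dv_{g}(y)\,dv_{g}(z)$ for the kernel modular. The point to keep in mind is that $[\,\cdot\,]_{K,p(y,z)}$ is exactly the Luxemburg seminorm attached to the map $w\mapsto\big(w(y)-w(z)\big)$ into the variable exponent Lebesgue space $L^{p(y,z)}\big(\mathcal M^2,\,K\,dv_{g}\,dv_{g}\big)$, so both summands of $\|\cdot\|$ are Luxemburg (semi)norms governed by modulars of the type covered by the lemmas of Section~\ref{rappel} (the equivalences relating $|w|<1,=1,>1$ to $\rho(w)<1,=1,>1$ and the corresponding power bounds). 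Hence it suffices to prove a single Clarkson-type modular inequality and then transfer it to the norm.

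First I would record the pointwise Clarkson-type inequalities for the integrand $|a-b|^{t}$, treating separately the two regimes dictated by the exponent: for $t\ge 2$ one has $\big|\tfrac{a+b}{2}\big|^{t}+\big|\tfrac{a-b}{2}\big|^{t}\le \tfrac12|a|^{t}+\tfrac12|b|^{t}$, while for $1<t<2$ one uses the second Clarkson inequality (or an equivalent elementary convexity estimate). Applying these with $t=q(y)$ under the integral over $\mathcal U$ and with $t=p(y,z)$ under the integral over $\mathcal M^2$ against the weight $K$, and using that the exponents remain in a compact subinterval of $(1,\infty)$ since $1<q^{-}\le q^{+}<\infty$ and $1<p^{-}\le p^{+}<\infty$ by \eqref{l30}, yields modular uniform convexity of each piece: for every $\varepsilon>0$ there is $\delta>0$ such that $\rho_q(u),\rho_q(v)\le 1$ together with $\rho_q\!\big(\tfrac{u-v}{2}\big)\ge\varepsilon$ force $\rho_q\!\big(\tfrac{u+v}{2}\big)\le 1-\delta$, and likewise for $\rho_{K,p}$.

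Next I would transfer this to the norm. Fix $\varepsilon>0$ and take $u,v$ with $\|u\|=\|v\|=1$ and $\|u-v\|\ge\varepsilon$. Since $\|u-v\|=[u-v]_{K,p}+|u-v|_{q(y)}$, at least one of the two (semi)norms of $u-v$ is $\ge\varepsilon/2$; combining the relations between (semi)norm and modular recalled in Section~\ref{rappel} with the modular uniform convexity just proved, the corresponding modular of $\tfrac{u-v}{2}$ is bounded below by a positive quantity depending only on $\varepsilon$, whence the corresponding modular of $\tfrac{u+v}{2}$ drops by a fixed amount. One then tracks how the unit budget $\|u\|=\|v\|=1$ is split between the two summands $|\cdot|_{q(y)}$ and $[\cdot]_{K,p}$ and converts this modular gain into a genuine norm gain $\big\|\tfrac{u+v}{2}\big\|\le 1-\delta'$ for some $\delta'=\delta'(\varepsilon)>0$.

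The main obstacle, and the step demanding the most care, is precisely this last conversion for the \emph{sum} norm: in contrast with a single Luxemburg norm, the total budget may be distributed very unevenly between the Lebesgue part and the kernel part, so the piece that registers the separation $\|u-v\|\ge\varepsilon/2$ need not be the piece carrying most of the mass. Handling this requires distinguishing the case in which the two mass fractions $|u|_{q(y)},|v|_{q(y)}$ (equivalently $[u]_{K,p},[v]_{K,p}$) are comparable, where one applies modular uniform convexity of the active piece directly, from the case in which they differ substantially, where the reverse triangle inequality shows that the \emph{other} piece already separates $u$ and $v$ with nonnegligible magnitude and so supplies the required gain. A secondary technical point is making the two-regime ($t\ge2$ versus $1<t<2$) variable exponent Clarkson analysis uniform in the exponent, which is guaranteed by the bounds \eqref{l30} and by $q\in C^{+}(\mathcal M)$.
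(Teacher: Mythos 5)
Your overall architecture (Clarkson-type inequalities at the modular level, then transfer to the Luxemburg norms) is a reasonable route for each of the two pieces $|\cdot|_{q(y)}$ and $[\cdot]_{K,p(y,z)}$ taken \emph{separately}; note that the paper itself proceeds differently, applying Clarkson's inequalities and the reverse Minkowski inequality (Adams' inequalities 27--28) directly to the integral expression defining the seminorm, split into the cases $p^{-}\geq 2$ and $1<p(y,z)<2$, without ever passing through modulars. The genuine gap in your plan is the final step, which you yourself flag as the delicate one: converting component-wise convexity gains into a gain for the \emph{sum} norm $\|w\|=[w]_{K,p(y,z)}+|w|_{q(y)}$. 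Your proposed dichotomy does not close it. In the case where the mass fractions differ substantially, the reverse triangle inequality indeed gives $[u-v]_{K,p(y,z)}\geq |[u]_{K,p(y,z)}-[v]_{K,p(y,z)}|$ and similarly for $|\cdot|_{q(y)}$, so both pieces ``separate'' $u$ and $v$; but uniform convexity yields \emph{no} midpoint gain from separation between nearly proportional vectors of different lengths: for $v$ and $\lambda v$ with $\lambda>0$, $\lambda\neq 1$, the midpoint norm equals the average of the norms exactly. An $\ell^{1}$-sum of uniformly convex (semi)norms is not uniformly convex in general, so no argument using only the convexity of each summand can succeed.

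Concretely, here is the configuration your argument cannot exclude: take $v$ with $[v]_{K,p(y,z)}$ negligible and $|v|_{q(y)}\approx 1$ (a near-constant function), and $h$ with $[h]_{K,p(y,z)}=\varepsilon$ but $|h|_{q(y)}=\sigma\ll\varepsilon$ (a small-amplitude, highly oscillatory function), and set $u=(v+h)/\|v+h\|$. Then $\|u\|=\|v\|=1$ and $\|u-v\|\gtrsim\varepsilon$, while a direct computation gives $\|\tfrac{u+v}{2}\|\geq 1-C(\sigma+[v]_{K,p(y,z)})$, which can be made arbitrarily close to $1$: component-wise convexity is powerless because in the seminorm the pair is nearly $(h',0)$ and in the Lebesgue norm it is nearly $(\alpha v, v)$, i.e.\ nearly proportional in both coordinates. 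Ruling this configuration out requires an ingredient that \emph{links} the two summands --- for instance a fractional Poincar\'e-type inequality $|w|_{q(y)}\leq C\,[w]_{K,p(y,z)}$, coming from the coercivity condition \eqref{equation51} together with the vanishing of $w$ outside $\mathcal{U}$, which forbids $[v]_{K,p(y,z)}$ from being small while $|v|_{q(y)}=1$. Your proposal never invokes any relation between the two summands, so the plan as written cannot be completed. A secondary flaw is that the modular-to-norm transfer you rely on is clean only on the unit sphere, where $\|x\|=1$ if and only if $\rho(x)=1$; the components here have norms summing to $1$ but individually less than $1$, and there the two-sided power bounds with exponents $p^{\pm}$ (or $q^{\pm}$) lose the modular gain whenever $p^{-}<p^{+}$.
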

\begin{proof}
Let $w, {v}\in W_{K}^{q(y), p(y,z)}(\mathcal{U}),$  and $\eta \in (0, 2)$, such that
$$1= \|{v}\|_{W_{K}^{q(y), p(y,z)}(\mathcal{U})}= \|w\|_{W_{K}^{q(y), p(y,z)}(\mathcal{U})}~
\hbox{and}~
 \|w-{v}\|_{W_{K}^{q(y), p(y,z)}(\mathcal{U})}\geq \eta.$$ 
 
Case 1: $p^{-}\geq 2.$ Thanks to    \cite[Inequality 28]{adams}, we get 
\begin{align}\label{inequality miknosci}
\begin{split}
\displaystyle \|\frac{w-{v}}{2}\|^{p(y,z)}_{W_{K}^{q(y), p(y,z)}(\mathcal{U})}+ \| \frac{w+{v}}{2}\|^{p(y,z)}_{W_{K}^{q(y), p(y,z)}(\mathcal{U})}\\ \leq \frac{1}{2}\left( \| w\|^{p(y,z)}_{W_{K}^{q(y), p(y,z)}(\mathcal{U})}+ \| {v}\|^{p(y,z)}_{W_{K}^{q(y), p(y,z)}(\mathcal{U})}\right).
\end{split}
\end{align}

Thanks to \eqref{inequality miknosci}, it follows that $$\displaystyle \| \frac{w+{v}}{2}\|^{p(y,z)}_{W_{K}^{q(y), p(y,z)}(\mathcal{U})}\leq 1 - (\frac{1}{\eta})^{p^{+}}.$$ 

We take  $\displaystyle  \delta=\delta(\eta)$ such that $$\displaystyle  1-(\frac{\eta}{2})^{p(y,z)}=(1-\delta)^{p(y,z)}$$
and get  $$\displaystyle \| \frac{w+{v}}{2}\|^{p(y,z)}_{W_{K}^{q(y), p(y,z)}(\mathcal{U})}\leq 1-\eta.$$

Case 2: $\displaystyle 1<p(y,z)<2.$ Note that 
\begingroup\makeatletter\def\f@size{8}\check@mathfonts 
$$\|w\|^{p^{'}(y,z)}_{W_{K}^{q(y), p(y,z)}(\mathcal{U})}=   \left[ \int_{\mathcal{M}^2}(|w(y)-w(z)|K(y, z) )^{p ^{'}(y,z)(p(y,z)-1)} d v_{{g}}(y) d v_{{g}}(z)\right]  ^{\frac{1}{p(y,z)-1}},$$
with,
$\displaystyle \frac{1}{p(y,z)}+\frac{1}{p{'}(y,z)}=1.$
\endgroup
 By the reverse Minkowski's inequality \\ \cite[Theorem 2.13]{adams}
  and  \cite[Inequality 27]{adams}, we have  that 
  \begingroup\makeatletter\def\f@size{8}\check@mathfonts 
\begin{align*}
&\displaystyle\|\frac{w +{v}}{2}\|^{p^{'}(y,z)}_{W_{K}^{q(y), p(y,z)}(\mathcal{U})}+\| \frac{w -{v}}{2}\|^{p^{'}(y,z)}_{W_{K}^{q(y), p(y,z)}(\mathcal{U})}\\
&\leq \Big\{  \int_{\mathcal{M}^2} \left[ |w(y)- w(z)+({v}(y)-{v}(z))|^{p^{'}(y,z)}+ |w(y)- w(z)+({v}(y)-{v}(z))|^{p^{'}(y,z)}\right] 
\\
&\times\left( \frac{1}{2}K(y, z)\right)^{p^{'}(y,z)(p(y,z)-1)}d v_{{g}}(y)d v_{{g}}(z) \}^{\frac{1}{p(y,z)-1}}\Big\}\\
&\leq \frac{1}{2}\|w\|^{p(y,z)}_{W_{K}^{q(y), p(y,z)}(\mathcal{U})})+ \frac{1}{2}\|{v}\|^{p(y,z)}_{W_{K}^{q(y), p(y,z)}(\mathcal{U})})^{p^{'}(y,z)-1}=1,
\end{align*}
\endgroup
therefore $$\displaystyle \| \frac{w+{v}}{2}\|^{p^{'}(y,z)}_{W_{K}^{q(y), p(y,z)}(\mathcal{U})}\leq 1- (\frac{1}{\varepsilon})^{p^{'}(y,z)}.$$ 

To complete the argument, choose $\displaystyle \delta=\delta (\eta) $ such that $\displaystyle 1-(\frac{\eta}{2})^{p^{'}(y,z)}= (1-\delta)^{p^{'}(y,z)}\cdot$
\end{proof}
\begin{rem}
According to  the Milman-Petits theorem \cite{adams},  $\displaystyle W_{K}^{q(y), p(y,z)}(\mathcal{U})$ is  a reflexive space.
\end{rem}
\begin{lem}\label{separability}
 Suppose that $(\mathcal{M}, {g})$  is a compact Riemannian manifold with $\dim\mathcal{M}=N$,   $ \mathcal{U}$   is a smooth open subset of $\mathcal{M},$  $K:\mathcal{U}    \times  \mathcal{U} \rightarrow (0, +\infty) $  is a  symetric function satisfying conditions Lévy-integrability and coercivity conditions,   $q\in C^{+}(\mathcal{M}), $ and  $p:\mathcal{U}    \times  \mathcal{U} \rightarrow (1, +\infty)$ satisfies conditions  \eqref{l3}-\eqref{l30}. Then  $( W_{K}^{q(y), p(y,z)}(\mathcal{U}), ||\cdot||) $ is a  separable space.  
\end{lem}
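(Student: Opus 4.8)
The plan is to prove separability by realizing $W_{K}^{q(y), p(y,z)}(\mathcal{U})$ as a closed subspace (up to isometry) of a finite product of variable exponent Lebesgue spaces, each of which is separable, and then to invoke the fact that every subset of a separable metric space is itself separable. The crucial structural observation is that the Gagliardo seminorm $[w]_{K,p(y,z)}$ is exactly the Luxemburg norm of the difference function $\widetilde{w}(y,z)=w(y)-w(z)$ in the weighted variable exponent space $L^{p(y,z)}(\mathcal{M}^2, d\mu)$, where $d\mu(y,z)=K(y,z)\,dv_{g}(y)\,dv_{g}(z)$. Indeed, both quantities are defined by the same infimum over $\lambda>0$ of the condition $\int_{\mathcal{M}^2}|\widetilde{w}/\lambda|^{p(y,z)}\,d\mu<1$.

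First I would set $Y=L^{q(y)}(\mathcal{U})\times L^{p(y,z)}(\mathcal{M}^2, d\mu)$ equipped with the sum norm $\|(u,v)\|_Y=|u|_{q(y)}+\|v\|_{L^{p(y,z)}(\mathcal{M}^2,d\mu)}$, and define the linear map $J:W_{K}^{q(y), p(y,z)}(\mathcal{U})\to Y$ by $J(w)=(w,\widetilde{w})$. Comparing the definition of $\|w\|_{W_{K}^{q(y), p(y,z)}(\mathcal{U})}$ with the sum norm on $Y$ and using the identification of $[w]_{K,p(y,z)}$ with the Luxemburg norm above yields $\|J(w)\|_Y=\|w\|_{W_{K}^{q(y), p(y,z)}(\mathcal{U})}$, so $J$ is a linear isometry onto its image $J(W_{K}^{q(y), p(y,z)}(\mathcal{U}))$.

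Next I would establish separability of each factor of $Y$. The space $L^{q(y)}(\mathcal{U})$ is separable because $q\in C^{+}(\mathcal{M})$ is bounded and the Riemannian measure is finite, which places us in the standard framework of variable exponent Lebesgue spaces recalled in Section~\ref{rappel}. For the second factor the exponent $p$ is bounded by \eqref{l30}, and the measure $\mu$ is $\sigma$-finite: although $K$ is singular along the diagonal by the coercivity condition \eqref{equation51}, the sets $\{(y,z): d_{g}(y,z)\ge 1/n\}$ exhaust $\mathcal{M}^2$ off the diagonal and each carries finite $\mu$-measure, so $L^{p(y,z)}(\mathcal{M}^2, d\mu)$ is separable as well. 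A finite product of separable normed spaces with the sum norm is separable, hence $Y$ is separable.

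Finally, $J(W_{K}^{q(y), p(y,z)}(\mathcal{U}))$ is a subset of the separable metric space $Y$, so it is separable; pulling a countable dense subset back through the isometry $J$ produces a countable dense subset of $W_{K}^{q(y), p(y,z)}(\mathcal{U})$, which proves the claim. I expect the main obstacle to be the verification that the weighted variable exponent space $L^{p(y,z)}(\mathcal{M}^2, d\mu)$ is separable: one must treat the diagonal singularity of the kernel carefully through the $\sigma$-finite exhaustion, and then confirm that boundedness of $p$ together with separability of the underlying measure algebra suffices to produce a countable dense family, for instance simple functions with rational coefficients supported on a countable generating algebra of measurable sets.
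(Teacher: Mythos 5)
Your proposal is correct and takes essentially the same approach as the paper: the paper defines the isometry $w \mapsto \big(w,\ (w(y)-w(z))K^{\frac{1}{p(y,z)}}(y,z)\big)$ into the product $L^{q(y)}(\mathcal{M})\times L^{p(y,z)}(\mathcal{M}\times\mathcal{M})$ and concludes by heredity of separability, which is exactly your embedding with the kernel absorbed into the integrand rather than into the measure. The only practical difference is that the paper's unweighted target space lets it quote separability of variable exponent Lebesgue spaces directly, whereas your weighted formulation requires the $\sigma$-finiteness verification via the L\'evy-integrability condition, which you carry out correctly.
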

\begin{proof}
Let  $\displaystyle L:W_{K}^{q(y), p(y,z)}(\mathcal{U})\rightarrow L ^{q(y)}(\mathcal{M})\times L ^{p(y,z)}(\mathcal{M}\times\mathcal{M})$ be  the operator defined by $$L(w)= \big(w(y),  (w(y)-w(z))K^{\frac{1}{p(y,z)}}(y,z)\big).$$  
$\displaystyle L$ is clearly well-defined and is an isometry. By   \cite[Proposition 3.17]{brezis}, the space  $\displaystyle ( W_{K}^{q(y), p(y,z)}(\mathcal{U}), ||\cdot||) $ is  indeed  separable.
\end{proof}
\section{Proof of Theorem ~\ref{embedding}}\label{1}
In this section we shall prove Theorem~\ref{embedding}, establishing an embedding of $W_{K}^{q(y), p(y,z)}(\mathcal{U})$ into $L ^{\mathfrak{\ell}(y)}(\mathcal{U})$.

\begin{proof}
Let  $\displaystyle p, q,$ and $\displaystyle \mathfrak{\ell}$  be continuous functions and $\mathcal{U}$  an open subset of $\mathcal{M}.$ There exist two positive constants $\displaystyle \alpha_{1}$ and  $\displaystyle \alpha_{2}$ such that 
\begin{equation} \label{ab}
\displaystyle q(y)\geq  p(y, y)+\alpha_{1}>0
\end{equation}
and 
\begin{equation} \label{ac}
 \frac{N p(y, y)}{N- sp(y, y)}\geq  \mathfrak{\ell}(y)+\alpha_{2}>0,
\end{equation}
for every $y\in \overline{\mathcal{U}}.$
Let $t\in(0, s). $ We use the continuity of $\displaystyle  p,  q, \mathfrak{\ell},$   \eqref{ab}, and \eqref{ac} to find a constant $\displaystyle \varepsilon=\varepsilon( p,  q,  \mathfrak{\ell}, \alpha_{1}, \alpha_{2})$ and a finite family of disjoint Lipschitz sets $\mathcal{U}_{j}$ such that
$\mathcal{U}= \bigcup_{j=1}^{N}  \mathcal{U}_{j}  \text{  and } \hbox{diam} ( \mathcal{U}_{j})< \varepsilon ,$~
 $$ \frac{N p({m}, z)}{N- tp({m}, z)}\geq  \mathfrak{\ell}(y)+ \frac{\alpha_{2}}{2}>0,~
 q(y)\geq  p({m}, z)+ \frac{\alpha_{1}}{2}>0,~\forall
 (y, z, {m}) \in  \mathcal{U}_{j} ^{3}.$$
 
 Put  $\displaystyle p_{j}= \inf_{(y, z)\in  \mathcal{U}_{j} \times \mathcal{U}_{j}} \{p(y, z)-\delta \}.$ By the continuity of the involved exponents, we can choose   $\displaystyle \delta=\delta(\alpha_{2}), $ with $\displaystyle p^{-}-1>\delta >0$ such that $$\frac{N p_{j}}{N-tp_{j}} \geq  \mathfrak{\ell}(y)+ \frac{\alpha_{2}}{3}, ~ \hbox{for every} ~ y\in  \mathcal{U}_{j}.$$
 
 So, we have the following
 \begin{equation}\label{statement1}
   \displaystyle \frac{N p_{j}}{N-tp_{j}} \geq  \mathfrak{\ell}(y)+ \frac{\alpha_{2}}{3}, 
 \,\,\text{for every}\,\, \displaystyle y\in  \mathcal{U}_{j}.\\
 \end{equation}
 \begin{equation}\label{statement2}
  \displaystyle p_{j}+\frac{\alpha_{1}}{2} \leq  q(y),  \,\,\text{for every}\,\, y\in \mathcal{U}_{j}.
 \end{equation} 
 
 By \cite[Lemma 2.4 ]{guo}, there exists a constant $\displaystyle C=C(N, t, \varepsilon, p_{j},   \mathcal{U}_{j})$ such that (see  \cite{guo} for more details)
 \begin{equation} \label{guo}
\displaystyle  ||w||_{L^{p_{j}^{*}}( \mathcal{U}_{j})} \leq C(
 ||w||_{L^{{p_{j}}}( \mathcal{U}_{j})} + [w]^{t, p_{j}}(\mathcal{U}_{j})) ~ \text {  for every  } ~ w        \in  W^{s, p_{j}} (\mathcal{U}_{j}).  
 \end{equation}
 
 Now, we shall prove the following three inequalities.\\
a) There exists a constant $\displaystyle c_{1}>0,$  such as: $\displaystyle c_{1} |w| _{L^{\mathfrak{\ell}(y)}(\mathcal{ \mathcal{U}})} \leq \sum_{j=0}^{N}  ||w||_ {L^{p_{j}^{*}}( \mathcal{U}_{j})}.$\\
b) There exists a constant $\displaystyle c_{2}>0,$  such as: $\displaystyle \sum_{j=0}^{N} ||w||_ {L^{p_{j}}( \mathcal{U}_{j})} \leq c_{2}  |w| _{L^{q(y)}(\mathcal{ \mathcal{U}})}.$\\
c) There exists a constant $\displaystyle c_{3}>0,$  such as:  $\displaystyle \sum_{j=0}^{N} [w]^{t, p_{j}}(\mathcal{U}_{j})) \leq  c_{3}[w]^{s, p(y, z)}(\mathcal{U}).$\\

 We shall first prove $(a)$.  We have that  $$\displaystyle w(y)= \sum_{j=0}^{N}  |w(y)|\chi _{\mathcal{U}_{j}},$$
 where  $\displaystyle \chi _{\mathcal{U}_{j}}$ is a characteristic function. Hence, we have  $$|w| _{L^{\mathfrak{\ell}(y)}(\mathcal{ \mathcal{U}})}\leq  \sum_{j=0}^{N}  |w| _{L^{\mathfrak{\ell}(y)}( \mathcal{U}_{j})}.$$ 
  Combining the statement~ \eqref{statement1} with the H\"older inequality, we obtain 
$$
\displaystyle  |w| _{L^{\mathfrak{\ell}(y)}( \mathcal{U}_{j})}\leq  C  ||w|| _{L^{p_{j}^{*}}( \mathcal{U}_{j})} |1|_{L^{{a}_{j}(y)}(\mathcal{U}_{j})} \leq  C(\mathcal{U}_{j}, {a}_{j})  ||w|| _{L^{p_{j}^{*}}(\mathcal{U}_{j})},$$
\begin{equation}
\hbox{where}~ \displaystyle \frac{1}{\mathfrak{\ell}(y)}+ \frac{1}{p_{j}^{*}}= \frac{1}{{a}_{j}(y)}, ~ \hbox{ for every}~
y\in \mathcal{U}_{j}.
  \end{equation}   
  Similarly, by using the fact that $q(y) > p_j$ for every $\displaystyle y\in \mathcal{U}_{j}, $ we get $(b)$.\\
  
  Now, we show $(c).$ Put $$\displaystyle G(y, z)= \frac{|w(y)- w(z)|}{d_{{g}}(y, z)^{s}}.$$  
   We use the H\"older  inequality and the definition of  $\displaystyle p_{j}, $ to get 
  \begin{align*}
\displaystyle   [w]^{t, p_{j}}(\mathcal{U}_{j})&= \left( \int_{\mathcal{U}_{j} \times \mathcal{U}_{j}}   
  \frac{|w(y)- w(z)| ^{p_{j}}}{d_{{g}}(y, z)^{N+tp_{j}}} d v_{{g}}(y) d v_{{g}}(z)\right) ^{\frac{1}{p_{j}}}\\
  &= \left( \int_{\mathcal{U}_{j}\times \mathcal{U}_{j}}   
  (\frac{|w(y)- w(z)|}{d_{{g}}(y, z)^{s}})^{ p_{j}} \frac{d v_{{g}}(y) d v_{{g}}(z)}{d_{{g}}(y, z)^{N+(t-s)p_{j}}}\right) ^{\frac{1}{p_{j}}}\\
  &\leq  C |G|_{L^{p(y, z)}( \mu _{{g}}, \mathcal{ \mathcal{U}}_{j} \times \mathcal{ \mathcal{U}}_{j}} |1|_{L^{\mathfrak{B}_{j}(y, z)}( \mu _{{g}},  \mathcal{U}_{j} \times  \mathcal{U}_{j})}\\
 &=C (\mathcal{U}_{j}, \mathfrak{B}_{j})|G|_{L^{p(y, z)}( \mu _{{g}}, \mathcal{U}_{j} \times \mathcal{U}_{j})},
  \end{align*}
  where $$\displaystyle \frac{1}{p_{j}}=\frac{1}{p(y, z)}+\frac{1}{ \mathfrak{B}_{j}(y, z)}
  ~\hbox{ and}~
  \displaystyle d\mu _{{g}}(y, z)= \frac{d v_{{g}}(y) d v_{{g}}(z)}{d_{{g}}(y, z)^{N+(t-s)p_{j}}}.$$
  
  Next, we prove that $|G|_{L^{p(y, z)}}\leq C  [w]^{s, p(y, z)}(\mathcal{U}_{j}).$
  Let $\lambda  >0$ be such that $$\int_{\mathcal{U}_{j} \times \mathcal{U}_{j}} \frac{|w(y)- w(z)|^{p(y, z)}}{\lambda ^{p(y, z)}d_{{g}}(y, z)^{N+sp(y, z)} }  d v_{{g}}(y) d v_{{g}}(z)<1.$$  
  Put $\displaystyle k=\sup \{1, \sup_{(y, z)\in \mathcal{U} \times \mathcal{U}} d_{{g}}(y, z)^{s-t} \} $ and $\displaystyle \breve{\lambda}= \lambda k.$ Then we have
  \begin{equation}
 \displaystyle  \int_{\mathcal{U}_{j}^2} \frac{|w(y)- w(z)|^{p(y, z)}}{((d_{{g}}(y, z)^{s} \breve{\lambda})^{p(y, z)}} \frac{d v_{{g}}(y) d v_{{g}}(z)}{d_{{g}}(y, z)^{N+(t-s)p_{j}}}\leq  \int_{\mathcal{U}_{j}^2}  \frac{|w(y)- w(z)|^{p(y, z)}}{d_{{g}}(y, z)^{N+sp(y, z)} \lambda ^{p(y, z)}} <1,
  \end{equation}  
  therefore $$|G|_{L^{p(y, z)}( \mu _{{g}}( \mathcal{U}_{j} \times \mathcal{ \mathcal{U}}_{j})}\leq \lambda k.$$  
   It now follows from \eqref{equation51}, inequalities $(a),   (b), (c), $  and   \cite[Lemma 2.4]{guo} that 
  \begin{align*}
 \displaystyle  |w|_{L^{\mathfrak{\ell}(y)}(\mathcal{U})}&\leq c \sum_{j=0}^{N} ||{u}||_{L^{p_{j}^{*}}(\mathcal{U})}\\
 &\leq c \sum_{j=0}^{N} \left( ||w||_{L^{ p_{j}}(\mathcal{U}_{j})}+ [w]^{t, p_{j}}(\mathcal{U}_{j})\right) \\
  &\leq c \left( |w|_{L^{q(y)}(\mathcal{U})}+ [w]^{s, p(y,z)}(\mathcal{U})\right) \\
  &\leq  c \alpha_{0}  \left( |w|_{L^{q(y)}(\mathcal{U})}+[w]^{  {K}, p(y,z)}(\mathcal{U})\right) \\
  &=  c \alpha _{0}  ||w||_{W_{K}^{q(y), p(y,z)}(\mathcal{U})}\cdot
  \end{align*}
We show that this embedding is compact. Let $\{w_{n} \}$ be a  bounded sequence in $W_{K}^{q(y), p(y,z)}(\mathcal{U}),$ we need to prove that there exists $w \in  L^{\mathfrak{\ell}(y)}(\mathcal{M})$ such that   for every $\mathfrak{\ell}(y)\in(1, p^{*}_{s}),$
 $$w_{n}  \to   w \text{ in  }   L^{\mathfrak{\ell}(y)}(\mathcal{M})\,\,\, \text { as }\,\,\, n \to  +\infty. $$  
  Since $\mathcal{M}$ is a compact Riemannian N-manifold, we can cover $\mathcal{M}$ by a finite number of charts $\displaystyle (\mathcal{U}_{j},  \varphi_{j})_{j=1, \ldots, m}$ satisfying 
  $$\frac{1}{Q} \delta_{ij}\leq  g_{ij}^{s}  \leq Q  \delta_{ij},$$
where  $g_{ij}^{s}$  are bilinear forms and $Q>1.$  Let  $\eta _{j}$ be a smooth partition of unity subordinate to the chart $\displaystyle (\mathcal{U}_{j},  \varphi_{j})_{j=1, \ldots, m}.$ Let  $\displaystyle w_{n} \in  
W_{K}^{q(y), p(y,z)}(\mathcal{M}).$ 
Then $$\displaystyle \eta _{j}w_{n} \in  
W_{K}^{q(y), p(y,z)}(\mathcal{M})~\hbox{ and}~\displaystyle (\eta _{j}w_{n}) \circ \varphi _{j}^{-1} \in W_{K}^{q(y), p(y,z)}(B_{0}(1)),$$ where $B_{0}(1)$ is an open unit ball of $\mathbb{R}^{N}.$ By \cite[Theorem 1.1]{kaufmann}, there  exists $w_{j} \in L^{\mathfrak{\ell}(y)}(\varphi _{j}(\mathcal{U}_{j})) $ such that  $$(\eta _{j}w_{n})\circ \varphi _{j}^{-1} \to   w_{j}\  \text{ strongly  in } \  L^{\mathfrak{\ell}(y)}(\varphi _{j}(\mathcal{U}_{j})) \ \text {  as }\  n \to  +\infty.$$
Hence
  $$\displaystyle \eta _{j}w_{n} \to   w_{j}  \circ \varphi _{j}= {a}_{j} \, \hbox{strongly  in} \,
  \displaystyle L^{\mathfrak{\ell}(y)}(\varphi _{j}(\mathcal{U}_{j})) ~ \hbox{as}~ 
  n \to  +\infty.$$  
   Finally, we put $$\displaystyle w= \sum_{j=1}^{m} {a}_{j} =  \sum_{j=1}^{m} w_{j} \circ \varphi _{j}  \in L^{\mathfrak{\ell}(y)}(\mathcal{M}).$$
\end{proof}
\vfill \eject
\section{Proof of Theorem~\ref{existence}}\label{2}
In this section, we shall prove our existence result stated in Theorem~\ref{existence}.

\begin{defn}
 We say that $w\in W_{K}^{q(y), p(y,z)}(\mathcal{U}) $  is a weak solution of  problem~\eqref{k1} if for every $ h\in (W_{K}^{q(y), p(y,z)}(\mathcal{U}))^{*}$ we have,
 \begin{align*}
&\int_{\mathcal{M}\times \mathcal{M}}|w(y)-w(z)|^{p(y, z)-2}(w(y)-w(z))(h(y)-h(z)) K(y, z) d v_{{g}}(y) d v_{{g}}(z)\\
&=
\lambda \int_{\mathcal{M}} \beta(y)|w (y)|^{r(y)-2}  w(y)h(y) d v_{{g}}(y)+\int_{\mathcal{M}} f(y, w(y)) h(y) d v_{{g}}(y).
 \end{align*} 
\end{defn}
 We consider the functional 
 $\zeta:W_{K}^{q(y), p(y,z)}(\mathcal{U})\rightarrow \mathbb{R} $  defined by 
\begin{align*}
 \zeta(w)&=\int_{\mathcal{M} \times \mathcal{M}}\frac{1}{p(y, z)}|w(y)-w(z)|^{p(y, z)} K(y, z) d v_{{g}}(y) d v_{{g}}(z)
\\
  &-\lambda \int_{\mathcal{M}}\frac{1}{r(y)}  \beta(y)|w(y)|^{r(y)}  d v_{{g}}(y)-\int_{\mathcal{M}} F(y, w(y)) d v_{{g}}(y).
 \end{align*}
 Then it follows  from  \cite{bahrouni, kaufmann} that $\zeta \in C^{1}(W_{K}^{q(y), p(y,z)}(\mathcal{U}), \mathbb{R})$ and   
  \begin{align*}
 &\langle \zeta ^{'}(w) , h\rangle\\&=\int_{\mathcal{M}^2}|w(y)-w(z)|^{p(y, z)-2}(w(y)-w(z))(h(y)-h(z)) K(y, z) d v_{{g}}(y) d v_{{g}}(z)\\
&-\lambda \int_{\mathcal{M}} \beta(y)|w(y)|^{r(y)-2}  w(y) h(y) d v_{{g}}(y)-\int_{\mathcal{M}} f(y, w(y)) h(y) d v_{{g}}(y)\\
 &=< L(w), h> - <S _{1}(w), h >- < S _{2}(w), h>.
 \end{align*}
\begin{proof}
Let $\displaystyle w\in W_{K}^{q(y), p(y,z)}(\mathcal{U}).$ Then $w$ is a weak solution of  problem~\eqref{k1}  if and only if 
\begin{equation}\label{abtractequation}
\displaystyle Lw+Sw=0,
\end{equation} 
where $\displaystyle L, \,S$ are  the operators defined in Lemma \ref{a} and  Lemma \ref{l2}. Since $\displaystyle S$ is bounded, continuous, and quasi-monotone (see Lemma \ref{a}) and $\displaystyle L$ is  strictly monotone,  thanks to the Minty-Browder  Theorem \cite[ Theorem 26 A]{zeidler},   we have that $\displaystyle L^{-1}=G$ is bounded continuous of type  $\displaystyle (S_{+}).$ 

Equation \eqref{abtractequation}  is equivalent to
\begin{equation}\label{e2} 
\displaystyle w= Gh \,\,\, \text{ and }\,\,\, h+S\circ Gh=0.
\end{equation}
To solve \eqref{e2}, we shall use the Berkovits topological degree introduced in Section~\ref{rappel}. To this end, we first show that the  set 
$$D=\left\lbrace h\in (W_{K}^{q(y), p(y,z)}(\mathcal{U}))^{*}: h+ tS\circ Gh=0 \, \text { for some  }\, t\in [0, 1]   \right\rbrace ,$$ is bounded. 
Let $\displaystyle h\in D $  and take $\displaystyle w= Gh. $ Using the  growth condition $(\mathcal{B}_{1})$, the H\"older inequality, the Young inequality, and continuous embedding \\
$\displaystyle  W_{K}^{q(y), p(y,z)}(\mathcal{U}) \hookrightarrow   L^{q(y)}(\mathcal{U}),$ we get 
\begin{align*}
&\displaystyle ||G h||_{W_{K}^{q(y), p(y,z)}(\mathcal{U})}\leq  \int_{\mathcal{M} \times \mathcal{M}}\left|w(y)-w(z)\right|^{p(y, z)} K(y,z) d v_{{g}}(y) d v_{{g}}(z)\\
&\displaystyle = \left\langle Lw, h\right\rangle \displaystyle = \left\langle h, Gh\right\rangle \\
&\displaystyle \leq |t| \left\langle  S\circ Gh, Gh\right\rangle \\
&\leq \lambda  \int_{\mathcal{M} } \beta(y) |w(y)|^{r(y)} d v_{{g}}(y)+ \int_{\mathcal{M}} f(y, w(y))w(y) d v_{{g}}(y)\\
&\displaystyle \leq  \lambda  ||\beta||_{\infty} C_{1} ||w||^{r^{+}}_{W_{K}^{q(y), p(y,z)}(\mathcal{U})}+C_{2}\left( \int_{\mathcal{M}} |f(y, w)|^{q^{'}(y)}d v_{{g}}(y)\right) ^{\frac{1}{q^{'}(y)}}\\
&+ C_{3}\left( \int_{\mathcal{M}}  |w(y)|^{q(y)} d v_{{g}}(y)\right) ^{\frac{1}{q(y)}}\\
& \displaystyle \leq \lambda  ||\beta||_{\infty} C_{1} ||w||^{r^{+}}_{W_{K}^{q(y), p(y,z)}(\mathcal{U})}+  C_{2} ||\beta||_{\infty} \int_{\mathcal{M}} \left( (1+|w(y)|^{(q(y)-1)q^{'}(y)})\right) ^{\frac{1}{q^{'}(y)}}\\
&\displaystyle + C_{3} \left( \int_{\mathcal{M}} |w(y)|^{q(y)} d v_{{g}}(y) \right) ^{\frac{1}{q(y)}}\\
&\displaystyle  \leq \lambda  ||\beta||_{\infty} C_{1} ||w||^{r^{+}}_{W_{K}^{q(y), p(y,z)}(\mathcal{U})}+2^{q^{+}}C_{4}||\beta||_{\infty}  ||w||_{W_{K}^{q(y), p(y,z)}(\mathcal{U})}\\
&\displaystyle +C_{4}||w||_{W_{K}^{q(y), p(y,z)}(\mathcal{U})}. 
\end{align*}

Since $\displaystyle S$ is bounded, it follows that $\displaystyle D$ is bounded in $\displaystyle (W_{K}^{q(y), p(y,z)}(\mathcal{U}))^*$.
As a result, there exists a positive constant $\displaystyle \eta>0$ such that $$ ||h||_{W_{K}^{q(y), p(y,z)}(\mathcal{U})^{*} }<\eta,\,\,\, \text { for every  } \,\, h\in D.$$

Furthermore, $\displaystyle h+ tS\circ Gh \neq 0$ for every $\displaystyle (h, t)\in \partial B_{\eta}(0)  \times  [0, 1]. $ Using Lemma \ref{le}, and $\displaystyle i+ S\circ G\in \mathcal{F}_{\mathfrak{B}}(\overline{B_{\eta}(0)})$  and $\displaystyle i=L\circ G \in \mathcal{F}_{\mathfrak{B}}(\overline{B_{\eta}(0)})$  are present. Next, $\displaystyle i+ S\circ G  $ is also bounded because the operators $\displaystyle i,\, S$ and $\displaystyle G$  are all bounded. We come to the conclusion that 
 $$i+ S\circ G \in  \mathcal{F}_{\mathfrak{B}, 1}(\overline{B_{\eta}(0)}) \text { and } i\in  \mathcal{F}_{\mathfrak{B}, 1}(\overline{B_{\eta}(0)}).$$
 
We consider the  map  $\displaystyle {H}:[0, 1]\times  \overline{B_{\eta}(0)} \to   (W_{K}^{q(y), p(y,z)}(\mathcal{U}))^{*}$ given by $$H(t, w)=  w +t S\circ L w.$$ 

 By the statements (1)-(2) in Theorem~\ref{th}, we can deduce   $$d(i+ S\circ G,  B_{\eta}(0), 0)= d(i, B_{\eta}(0), 0)=1,$$ by applying the homotopy invariance and normalization properties of the degree $d$ from Theorem \ref{th}. Therefore there exists $\displaystyle w \in  B_{\eta}(0)$ such that  $h+S\circ G h=0.$
We can now deduce that  $\displaystyle w= Gh$  is a weak solution to problem \eqref{k1}  in $W_{K}^{q(y), p(y,z)}(\mathcal{U}).$ This completes the proof.
\end{proof}
\section{Appendix}\label{appendix}
 \begin{lem}\label{a}
Suppose that $(\mathcal{M} , {g})$  is  a compact $N$-dimensional Riemannian manifold,   
 $\mathcal{U}$  is  a smooth open subset of $\mathcal{M},$ 
 $K:\mathcal{U}\times\mathcal{U} \rightarrow (0, +\infty) $ is a  symmetric function satisfying  Lévy-integrability and coercivity  conditions. Assume that assumption $ (\mathcal{B}_{1})$  holds. Then the operator  $L:W_{K}^{q(y), p(y,z)}(\mathcal{U})\rightarrow (W_{K}^{q(y), p(y,z)}(\mathcal{U}))^{*} $  is continuous, bounded and strictly monotone, and
 \begin{itemize}
\item[i)] L is  an operator of type $\displaystyle (S_{+}),$ 
  \item[ii)]   $\displaystyle L:W_{K}^{q(y), p(y,z)}(\mathcal{U})\rightarrow (W_{K}^{q(y), p(y,z)}(\mathcal{U}))^{*} $ is a homeomorphism.
\end{itemize}
 \end{lem}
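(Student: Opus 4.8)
The plan is to treat $L$ as the Fréchet derivative of the convex modular functional $J(w)=\int_{\mathcal{M}^2}\frac{1}{p(y,z)}|w(y)-w(z)|^{p(y,z)}K(y,z)\,dv_{g}(y)\,dv_{g}(z)$ and to verify the listed properties one at a time, reserving the $(S_{+})$ condition for last since it is the key ingredient in both (i) and the homeomorphism claim (ii). Throughout I write $U_w(y,z)=w(y)-w(z)$. Boundedness and continuity follow from the variable-exponent Hölder inequality applied to $\langle L(w),h\rangle$ and from the dominated convergence theorem for the Nemytskii-type map $w\mapsto |U_w|^{p(y,z)-2}U_w\,K^{1-1/p(y,z)}$, where the Lévy-integrability of $gK$ in \eqref{equation50} supplies an integrable dominating function near the diagonal. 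Continuity is strong-to-strong and hence implies demicontinuity, which is what the Minty--Browder step will require. Strict monotonicity is then the pointwise inequality $(|a|^{n-2}a-|b|^{n-2}b)\cdot(a-b)>0$ for $a\neq b$, applied with $a=U_w(y,z)$, $b=U_v(y,z)$, $n=p(y,z)$ and integrated against $K\,dv_{g}\,dv_{g}$, which gives $\langle L(w)-L(v),w-v\rangle>0$ whenever $w\neq v$.

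The crux is property (i). Assume $w_n\rightharpoonup w$ in $W_{K}^{q(y),p(y,z)}(\mathcal{U})$ with $\limsup_n\langle L(w_n),w_n-w\rangle\leq 0$. Since $L(w)$ is a fixed functional, weak convergence gives $\langle L(w),w_n-w\rangle\to 0$, so monotonicity forces $P_n:=\langle L(w_n)-L(w),w_n-w\rangle\to 0$. The integrand of $P_n$ is nonnegative, and by Simon's inequality (Lemma~\ref{simon}) it dominates $|U_{w_n}-U_w|^{p(y,z)}$ on the region $\{p(y,z)\geq 2\}$; on $\{1<p(y,z)<2\}$ the second branch of Lemma~\ref{simon}, combined with the uniform bound on $\int_{\mathcal{M}^2}|U_{w_n}|^{p(y,z)}K\,dv_{g}\,dv_{g}$ and a Hölder splitting, again yields control of $|U_{w_n}-U_w|^{p(y,z)}$. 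Decomposing $\mathcal{M}^2$ into these two regions lets me conclude that the modular of $U_{w_n}-U_w$ tends to $0$, hence $[w_n-w]_{K,p(y,z)}\to 0$ via the variable-exponent modular--seminorm relations. Finally, the compact embedding of Theorem~\ref{embedding} gives $w_n\to w$ strongly in $L^{q(y)}(\mathcal{U})$, so both parts of the norm converge and $w_n\to w$ in $W_{K}^{q(y),p(y,z)}(\mathcal{U})$.

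For (ii) I would show $L$ is a bijection with continuous inverse. Injectivity is immediate from strict monotonicity. Surjectivity follows from the Minty--Browder theorem \cite{zeidler}: $L$ is bounded, demicontinuous, monotone, and coercive, where coercivity comes from $\langle L(w),w\rangle=\int_{\mathcal{M}^2}|U_w|^{p(y,z)}K\,dv_{g}\,dv_{g}$ together with the modular inequalities of the seminorm, giving $\langle L(w),w\rangle\geq[w]_{K,p(y,z)}^{p^-}$ for large norm, and a fractional Poincaré-type control of $|w|_{q(y)}$ by $[w]_{K,p(y,z)}$ on the Dirichlet space that makes the seminorm equivalent to the full norm. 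Continuity of $L^{-1}$ is then a standard consequence of $(S_{+})$: if $L(w_n)\to f$, coercivity makes $\{w_n\}$ bounded, a weak limit $\tilde w$ satisfies $\langle L(w_n),w_n-\tilde w\rangle\to 0$, so $(S_{+})$ yields $w_n\to\tilde w$ strongly, and continuity of $L$ with injectivity identifies $\tilde w=L^{-1}f$; a subsequence argument upgrades this to convergence of the whole sequence.

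The main obstacle I anticipate is the $(S_{+})$ verification on the subregion $\{1<p(y,z)<2\}$, where Simon's inequality is not a clean lower bound and one must combine its second branch with a Hölder splitting and the uniform modular bounds. Additional care is needed because $p(\cdot,\cdot)$ varies, so the passage from modular convergence to seminorm convergence must rely on the variable-exponent modular--norm relations rather than a single fixed power, and because the coercivity step tacitly uses the equivalence of the seminorm with the full norm, which is exactly where the exterior Dirichlet condition and a fractional Poincaré inequality enter.
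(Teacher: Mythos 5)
Your proposal is correct and, for most of the lemma, follows the same route as the paper: boundedness and continuity are obtained via the kernel splitting $K=K^{(p-1)/p}\,K^{1/p}$, H\"older's inequality, and the dominated convergence theorem; strict monotonicity comes from Simon's inequality (Lemma~\ref{simon}); and for (ii) you argue injectivity from strict monotonicity, surjectivity from Minty--Browder, and continuity of $L^{-1}$ from the $(S_{+})$ property, exactly as the paper does. The genuine divergence is the proof of $(S_{+})$. The paper first uses the compact embedding of Theorem~\ref{embedding} to get $w_{n}\to w$ a.e.\ in $\mathcal{U}$, then Fatou's lemma together with a Young-inequality lower bound to deduce convergence of the energies $\int_{\mathcal{M}^2}|U_{w_n}|^{p(y,z)}K\to\int_{\mathcal{M}^2}|U_{w}|^{p(y,z)}K$, and finally invokes the Brezis--Lieb lemma to upgrade a.e.\ convergence plus convergence of modulars to strong convergence. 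You instead estimate the modular of $U_{w_n}-U_{w}$ directly by the monotonicity pairing $P_n$, using the first branch of Simon's inequality on $\{p(y,z)\geq 2\}$ and the second branch combined with a H\"older splitting and the uniform modular bounds on $\{1<p(y,z)<2\}$. Your route is more self-contained (it needs neither Brezis--Lieb nor the a.e.\ convergence supplied by compactness) and yields a quantitative decay of the modular in terms of $P_n$; the paper's route avoids the delicate variable-exponent H\"older splitting on the region $\{1<p(y,z)<2\}$, which you correctly identify as the technical crux of your version. Both arguments are sound.

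Two small repairs to your write-up. First, in the continuity step the integrable dominating function is not supplied by the L\'evy-integrability condition \eqref{equation50}: it comes from the converse of the dominated convergence theorem applied to the strongly convergent sequence $V_n=(w_n(y)-w_n(z))K^{1/p(y,z)}\to V$ in $L^{p(y,z)}(\mathcal{U}\times\mathcal{U})$, which is how the paper produces $h$ with $|V_n|\leq h$ along a subsequence. Second, your observation that coercivity of $L$ with respect to the full norm requires a Poincar\'e-type inequality (the Gagliardo seminorm must control $|w|_{q(y)}$ on the Dirichlet class) is a genuine prerequisite for the Minty--Browder step that the paper glosses over with ``it is easy to see that $L$ is coercive''; making that inequality explicit is the right thing to do.
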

 \begin{proof}
 It is obvious that $\displaystyle  L$ is bounded.
 We show that $L$ is continuous. Assume that $w_{n}  \rightarrow w$ in $W_{K}^{q(y), p(y,z)}(\mathcal{U})$ and we show that $\displaystyle L(w_{n}) \rightarrow L(w)$ in $\displaystyle (W_{K}^{q(y), p(y,z)}(\mathcal{U}))^{*}$. Indeed,
\begingroup\makeatletter\def\f@size{10.1}\check@mathfonts
\begin{align*}
 &\displaystyle \left\langle Lw_{n}-Lw, \varphi\right\rangle =
\int_{ \mathcal{M}^2 } \Big[(\left|w_{n}(y)-w_{n}(z)\right|^{p(y, z)-2}\left(w_{n}(y)-w_{n}(z)\right)\\
&-|w(y)-w(z)|^{p(y, z)-2}(w(y)-w(z))\Big]\times K(y, z)(\varphi(y)-\varphi(z))d v_{{g}}(y) d v_{{g}}(z) \\
&=\int_{\mathcal{M}^2}\Big[\left|w_{n}(y)-w_{n}(z)\right|^{p(y, z)-2}\left(w_{n}(y)-w_{n}(z)\right)K(y,z)^{\frac{p(y,z)-1}{p(y, z)}}\\
&-|w(y)-w(z)|^{p(y, z)-2}(w(y)-w(z))K(y,z)^{\frac{p(y,z)-1}{p(y, z)}}\Big] \\
&\times (\varphi(y)-\varphi(z))K(y,z)^{\frac{1}{p(y, z)}} d v_{{g}}(y) d v_{{g}}(z). 
\end{align*}
\endgroup
Put 
\begingroup\makeatletter\def\f@size{9}\check@mathfonts
 $$G_{n}(y,z)=|w_{n}(y)-w_{n}(z)|^{p(y, z)-2}(w_{n}(y)-w_{n}(z))K(y,z)^{\frac{p(y,z)-1}{p(y, z)}}\in L ^{p^{'}(y,z)}( \mathcal{U}\times \mathcal{U} ),$$
\endgroup 
$$G(y,z)=|w(y)-w(z)|^{p(y, z)-2}(w(y)-w(z))K(y,z)^{\frac{p(y,z)-1}{p(y, z)}}\in L ^{p^{'}(y,z)}(\mathcal{U} \times \mathcal{U} ),$$ 
and
$$F(y,z)= (\varphi(y)-\varphi(z)) K(y,z)^{\frac{1}{p(y, z)}}\in L ^{p(y,z)}(\mathcal{U} \times \mathcal{U} ),$$
where, $\displaystyle \frac{1}{p(y,z)}+ \frac{1}{p^{'}(y,z)}=1.$
Thanks to the H\"older   inequality, we have $$\left\langle L\left(w_{n}\right)-L(w), \varphi\right\rangle \leq 2 |G_{n}(y,z)-G(y,z)|_{L ^{p^{'}(y,z)}(\mathcal{U} \times \mathcal{U} )} |F|_{L ^{p(y,z)}(\mathcal{U}\times \mathcal{U}}).$$

Thus, $$||L\left(w_{n}\right)-L(w)||_{(W_{K}^{q(y), p(y,z)}(\mathcal{U}))^{*}} \leq 2 |G_{n}(y,z)-G(y,z)|_{L ^{p^{'}(y,z)}(\mathcal{U}\times \mathcal{U} )}.$$

Let $$\displaystyle V_{n}(y,z)=(w_{n}(y)-w_{n}(z))K(y,z)^{\frac{1}{p(y, z)}}\in L ^{p(y,z)}(\mathcal{U}\times \mathcal{U}),$$ and  
$$\displaystyle V(y,z)=(w(y)-w(z))K(y,z)^{\frac{1}{p(y, z)}}\in L ^{p(y,z)}(\mathcal{U} \times \mathcal{U}).$$

Since $\displaystyle w_{n}\rightarrow w $ in $\displaystyle W_{K}^{q(y), p(y,z)}(\mathcal{U}),$ we have $\displaystyle V_{n}\rightarrow V$ in  $\displaystyle  L ^{p(y,z)}(\mathcal{U} \times \mathcal{U}).$
So,  there exists a subsequence of $\displaystyle \{V_{n}\}_{n\in \mathbb{N}}$ and  $\displaystyle h(y,z)\in L ^{p(y,z)}(\mathcal{U}\times \mathcal{U})$ such that $\displaystyle V_{n} \rightarrow V$ a.e in $\mathcal{U}\times \mathcal{U}$ and $\displaystyle |V_{n}|\leq h(y,z).$   
Therefore we have $\displaystyle G_{n} \rightarrow G$ a.e in $\mathcal{U}\times \mathcal{U}$ and $$\displaystyle |G_{n}(y,z)|= |V_{n}(y,z)|^{p(y,z)-1}\leq h(y,z)^{p(y,z)-1}.$$

We use the Dominated Convergence Theorem to get  $$G_{n} \rightarrow G \text { in }  L ^{p^{'}(y,z)}(\mathcal{U}\times \mathcal{U} ).$$  

By Lemma~\ref{simon},  $\displaystyle L $ is strictly monotone.
Now, we show that $\displaystyle  L$ is mapping of type $\displaystyle (S_{+}).$  Let   $\displaystyle \{w_{n}\}_{n\in \mathbb{N}} \subset W_{K}^{q(y), p(y,z)}(\mathcal{U}) $ be a sequence with $\displaystyle w_{n} \rightharpoonup w$ in $W_{K}^{q(y), p(y,z)}(\mathcal{U})$ and  
$\displaystyle \limsup _{n \rightarrow+\infty}\left\langle L\left(w_{n}\right)-\right.$ $\left.L(w ), w_{n}-w\right\rangle \leq 0$.
Using (i), we get   
\begin{equation}\label{k3}
\displaystyle 0=\lim _{n \rightarrow+\infty}\left\langle Lw_{n}-Lw, w_{n}-w\right\rangle\cdot
\end{equation}

By Theorem\ref{embedding}, we have that $\displaystyle w_{n}(y) \rightarrow w(y) \,\text { a.e.  in  }  \, \mathcal{U}.$
This, in combination with Fatou's lemma, gives us

$$\liminf _{n \rightarrow +\infty} \int_{\mathcal{M}^2}\left|w_{n}(y)-w_{n}(z)\right|^{p(y, z)} K(y, z) d v_{{g}}(y) d v_{{g}}(z)$$
\begin{equation}
 \geq \int_{\mathcal{M}^2}|w(y)-w(z)|^{p(y, z)} K(y,z) d v_{{g}}(y) d v_{{g}}(z).
\end{equation}

On the other hand, we have
\begin{equation}\label{k4}
\lim _{n \rightarrow+\infty}\left\langle L\left(w_{n}\right), w_{n}-w\right\rangle=\lim _{n \rightarrow+\infty}\left\langle L\left(w_{n}\right)-L(w), w_{n}-w\right\rangle=0.
\end{equation}
Using Young's inequality, we can see  there is a positive constant $\displaystyle c>0$ such that
\begingroup\makeatletter\def\f@size{9}\check@mathfonts
\begin{align*}
&\left\langle L\left(w_{n}\right), w_{n}-w\right\rangle \\&=\int_{\mathcal{M} \times \mathcal{M}}\left|w_{n}(y)-w_{n}(z)\right|^{p(y, z)} K(y,z) d v_{{g}}(y) d v_{{g}}(z) \\
&-\int_{\mathcal{M}^2}\left|w_{n}(y)-w_{n}(z)\right|^{p(y, z)-2}\left(w_{n}(y)-w_{n}(z)\right)(w(y)-w(z)) K(y,z)d v_{{g}}(y) d v_{{g}}(z) \\
& \geq \int_{\mathcal{M}^2}\left|w_{n}(y)-w_{n}(z)\right|^{p(y, z)} K(y,z)d v_{{g}}(y) d v_{{g}}(z) \\
&-\int_{\mathcal{M}^2}\left|w_{n}(y)-w_{n}(z)\right|^{p(y, z)-1}|w(y)-w(z)| K(y,z) d v_{{g}}(y) d v_{{g}}(z)\\
&\geq c \int_{\mathcal{M}^2}\left|w_{n}(y)-w_{n}(z)\right|^{p(y, z)} K(y,z) d v_{{g}}(y) d v_{{g}}(z) \\
&-c \int_{\mathcal{M}^2}|w(y)-w(z)|^{p(y, z)} K(y,z) d v_{{g}}(y) d v_{{g}}(z).
\end{align*}
\endgroup
According to \eqref{k3} - \eqref{k4}, we get  
 
 $$\lim _{n \rightarrow+\infty} \int_{\mathcal{M}^2}\left|w_{n}(y)-w_{n}(z)\right|^{p(y, z)} K(y,z) d v_{{g}}(y) d v_{{g}}(z)$$
 \begin{equation} \label{k5}
 =\int_{\mathcal{M}^2}|w(y)-w(z)|^{p(y, z)} K(y,z) d v_{{g}}(y) d v_{{g}}(z)\cdot
 \end{equation}
As a consequence of  the Brezis-Lieb Lemma \cite{brezis}, \eqref{k3},  and \eqref{k5}, $\displaystyle L$ is of type $\displaystyle (S_+)\cdot$

We show that  $\displaystyle L$ is a homeomorphism. It is easy to see that $\displaystyle L $ is coercive and injective.  Thanks to the  Minty-Browder Theorem \cite[Theorem 26 A]{zeidler},  $\displaystyle L$ is surjective. So, $\displaystyle L$ is a bijection. There exists a map \\ $\displaystyle G:(W_{K}^{q(y), p(y,z)}(\mathcal{U}))^{*}\rightarrow W_{K}^{q(y), p(y,z)}(\mathcal{U})$ such that $\displaystyle G\circ L= id_{W_{K}^{q(y), p(y,z)}(\mathcal{U})}$ and $\displaystyle L\circ G= id_{(W_{K}^{q(y), p(y,z)}(\mathcal{U}))^{*}}.$\\ 
We show that $\displaystyle G $ is continuous. Let $\displaystyle g_{n}, g \in W_{K}^{q(y), p(y,z)}(\mathcal{U})$ be such that $\displaystyle g_{n}\rightarrow  g   \text {  in } W_{K}^{q(y), p(y,z)}(\mathcal{U}).$  Let $\displaystyle t_{n}=G(g_{n}), w=G(g). $ Then $\displaystyle L(w_{n})=g_{n}$ and $\displaystyle L(w)=g.$ Since $\displaystyle \{t_{n}\}_{n\in \mathbb{N}}$ is bounded in $\displaystyle W_{K}^{q(y), p(y,z)}(\mathcal{U})$, we have $\displaystyle t_{n}\rightharpoonup  w $ in $\displaystyle W_{K}^{q(y), p(y,z)}(\mathcal{U})$. It follows that   $$ \lim _{n \rightarrow+\infty}< L\left(t_{n}\right)-L(w), t_{n}-w>=   \lim _{n \rightarrow+\infty}<g_{n}, t_{n}-w>=0.$$  

Since $\displaystyle L$ is of type $\displaystyle (S_{+})$,  we get $\displaystyle t_{n} \rightarrow w $ in $\displaystyle W_{K}^{q(y), p(y,z)}(\mathcal{U}).$ This completes the proof. 
 \end{proof}
 \begin{lem}\label{l2}
If  $\displaystyle f$ satisfies  $(\mathcal{B}_{1}),$  then  the  operator $\displaystyle S: W_{K}^{q(y), p(y,z)}(\mathcal{U}) \to (\displaystyle W_{K}^{q(y), p(y,z)}(\mathcal{U}))^{*}$ defined by $$\left\langle Sw, \varphi\right\rangle =-\lambda \int_{\mathcal{M}} \beta(y)|w(y)|^{r(y)-2} w \varphi d v_{{g}}(y)-\int_{\mathcal{M}} f(y, w(y)) \varphi d v_{{g}}(y),$$

for every $\displaystyle \varphi \in \left( \displaystyle W_{K}^{q(y), p(y,z)}(\mathcal{U})\right) ^{*} $
 is compact.
 \end{lem}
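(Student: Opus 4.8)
The plan is to realize $S$ as a finite combination of operators, each of which factors through a compact embedding supplied by Theorem~\ref{embedding}, and then to invoke the principle that the composition of a continuous map with a compact map is compact. Write $\langle Sw,\varphi\rangle=-\lambda\langle S_{1}w,\varphi\rangle-\langle S_{2}w,\varphi\rangle$, where
$$\langle S_{1}w,\varphi\rangle=\int_{\mathcal{M}}\beta(y)|w(y)|^{r(y)-2}w(y)\,\varphi(y)\,dv_{g}(y),\qquad\langle S_{2}w,\varphi\rangle=\int_{\mathcal{M}}f(y,w(y))\,\varphi(y)\,dv_{g}(y),$$
so that it suffices to prove that $S_{1}$ and $S_{2}$ are each compact. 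Assumption $(\mathcal{B}_{1})$ guarantees $1<q(y)<p^{*}_{s}(y)$, hence Theorem~\ref{embedding} gives a compact embedding $\iota_{q}:W_{K}^{q(y),p(y,z)}(\mathcal{U})\hookrightarrow L^{q(y)}(\mathcal{U})$; since moreover $r(y)\le r^{+}\le q^{-}\le q(y)<p^{*}_{s}(y)$, the embedding $\iota_{r}:W_{K}^{q(y),p(y,z)}(\mathcal{U})\hookrightarrow L^{r(y)}(\mathcal{U})$ is compact as well.

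Next I would identify the two Nemytskii operators at play. The growth bound $f(y,z)\le\alpha(1+|z|^{q(y)-1})$ from $(\mathcal{B}_{1})$ gives $|f(y,w)|^{q'(y)}\le C(1+|w|^{q(y)})$, so $N_{f}(w):=f(\cdot,w(\cdot))$ maps $L^{q(y)}(\mathcal{U})$ boundedly into the dual exponent space $L^{q'(y)}(\mathcal{U})$, where $\frac{1}{q(y)}+\frac{1}{q'(y)}=1$; similarly $|\beta(y)|w|^{r(y)-2}w|\le\|\beta\|_{\infty}|w|^{r(y)-1}$ shows that $N_{r}(w):=\beta(\cdot)|w(\cdot)|^{r(\cdot)-2}w(\cdot)$ maps $L^{r(y)}(\mathcal{U})$ boundedly into $L^{r'(y)}(\mathcal{U})$. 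Writing $\iota_{q}^{*}$, $\iota_{r}^{*}$ for the (continuous) adjoints, the defining integrals are precisely the dual pairings evaluated on the images of $w$ and $\varphi$, which yields the factorizations $S_{2}=\iota_{q}^{*}\circ N_{f}\circ\iota_{q}$ and $S_{1}=\iota_{r}^{*}\circ N_{r}\circ\iota_{r}$. Since $\iota_{q}$ and $\iota_{r}$ are compact and $\iota_{q}^{*},\iota_{r}^{*}$ are continuous, compactness of $S_{1}$ and $S_{2}$ reduces to the continuity of the Nemytskii operators $N_{f}$ and $N_{r}$ between the relevant variable-exponent Lebesgue spaces.

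To realize this sequentially, take a bounded sequence $\{w_{n}\}\subset W_{K}^{q(y),p(y,z)}(\mathcal{U})$. Reflexivity of the space (which follows from the uniform convexity established in Lemma~\ref{unif-convex}) lets me extract a subsequence with $w_{n}\rightharpoonup w$; the compact embeddings then give $w_{n}\to w$ strongly in $L^{q(y)}(\mathcal{U})$ and in $L^{r(y)}(\mathcal{U})$, and after a further subsequence $w_{n}\to w$ a.e. on $\mathcal{U}$ with an integrable dominating function in each space. Feeding the a.e. convergence and the domination into the modular characterization of norm convergence recalled in the preliminaries yields $N_{f}(w_{n})\to N_{f}(w)$ in $L^{q'(y)}(\mathcal{U})$ and $N_{r}(w_{n})\to N_{r}(w)$ in $L^{r'(y)}(\mathcal{U})$. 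Hölder's inequality together with the continuity of the embeddings then gives, for every test function $\varphi$,
$$|\langle S_{2}w_{n}-S_{2}w,\varphi\rangle|\le C\,|N_{f}(w_{n})-N_{f}(w)|_{L^{q'(y)}(\mathcal{U})}\,\|\varphi\|_{W_{K}^{q(y),p(y,z)}(\mathcal{U})},$$
and analogously for $S_{1}$; taking the supremum over $\|\varphi\|\le 1$ shows $S_{2}w_{n}\to S_{2}w$ and $S_{1}w_{n}\to S_{1}w$ in the dual norm, so $S=-\lambda S_{1}-S_{2}$ is compact.

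I expect the genuine obstacle to be this last continuity step in the variable-exponent setting, namely upgrading the a.e. convergence $w_{n}\to w$ to the strong convergence $N_{f}(w_{n})\to N_{f}(w)$ in $L^{q'(y)}(\mathcal{U})$. In the constant-exponent case this is the classical Krasnoselskii argument, but here it must be carried out through the modular functional $\rho_{q'(y)}$ and the equivalence between modular and norm convergence, and one must verify that the dominating function produced by the compact embedding indeed controls $|f(\cdot,w_{n})|^{q'(y)}$ uniformly, so that a generalized dominated convergence theorem applies; the symmetric check for $N_{r}$ is analogous and slightly easier because of the simpler pointwise bound.
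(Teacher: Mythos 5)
Your proposal is correct and takes essentially the same route as the paper: the same splitting of $S$ into the two Nemytskii pieces $S_{1}$, $S_{2}$, the same growth bounds from $(\mathcal{B}_{1})$ to show they map boundedly into dual Lebesgue spaces, the same a.e.-convergence-plus-domination argument for their continuity, and the same transfer of compactness from the embedding of Theorem~\ref{embedding} to $S$. The only cosmetic differences are that the paper puts the compactness on the adjoint $i^{*}:L^{q'(y)}(\mathcal{U})\to (W_{K}^{q(y),p(y,z)}(\mathcal{U}))^{*}$ (Schauder) rather than on the inner embedding, and it routes $S_{1}$ through the $q$-exponent spaces using $r^{+}\le q^{-}$ instead of through $L^{r(y)}(\mathcal{U})$ and $L^{r'(y)}(\mathcal{U})$ as you do.
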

\begin{proof}
Let 
\begin{align*}
S_{1}&:W_{K}^{q(y), p(y,z)}(\mathcal{U}) \rightarrow L^{q^{'}(y)}(\mathcal{U})&S_{2}&:W_{K}^{q(y), p(y,z)}(\mathcal{U}) \rightarrow L^{q^{'}(y)}(\mathcal{U}) \\
w& \longmapsto   S_{1}w=-\lambda  \beta(y)|w(y)|^{r(y)-2} w(y) &w& \longmapsto   S_{2}w=-f(y,w(y)).  
\end{align*}

We shall show that $\displaystyle  S_{1}$ and $\displaystyle S_{2}$ are both bounded and continuous.
For every $w\in W_{K}^{q(y), p(y,z)}(\mathcal{U}),$
\begin{align*}
\displaystyle |S_{1} w|_{q^{'}(y)}&=\lambda  \int_{\mathcal{M}} |\beta(y)|w(y)|^{r(y)-2} w (y)|^{q^{'}(y)}d v_{{g}}(y)\\
&\leq \lambda ||\beta||_{\infty}\int_{\mathcal{M}} ||w(y)|^{r(y)-1}  |^{q^{'}(y)}d v_{{g}}(y)\\
&\leq \lambda  C ||\beta||_{\infty} \int_{\mathcal{M}} ||w(y)|^{q(y)-1} |^{q^{'}(y}d v_{{g}}(y)\\
&\leq \lambda  C ||\beta||_{\infty} \int_{\mathcal{M}} |w(y)|^{q(y)}d v_{{g}}(y). 
\end{align*}

This implies 
that $\displaystyle S_{1} $ is bounded in $\displaystyle W_{K}^{q(y), p(y,z)}(\mathcal{U}).$
By condition $( \mathcal{B}_{1})$, there exists $\alpha>0 $ such that $$|f(y,w(y))|\leq \alpha(1+|w(y)|^{q(y)-1}).$$

Therefore \begin{align*}
\displaystyle |S_{2} w|_{q^{'}(y)}^{q^{'}(y)}&=\int_{\mathcal{M}} |f(y, w(y)|^{q^{'}(y)} d v_{{g}}(y)\\
&\leq \int_{\mathcal{M}} \alpha(1+|w(y)|^{q(y)-1})^{q^{'}(y)}d v_{{g}}(y)\\
&\leq 2^{q^{^{'}+}}(|\mathcal{M}|+\int_{\mathcal{M}} |w(y)|^{(q(y)-1)q^{'}(y)}d v_{{g}}(y)\\
 &\leq\alpha c^{'}(\mathcal{M}, q(y))\int_{\mathcal{M}} |w(y)|^{(q(y)-1)q^{'}(y)}d v_{{g}}(y)\\
&\leq \alpha c^{'}(\mathcal{U}, q(y)) |w(y)|_{L^{q^{'}(y)}},
\end{align*}

Hence $\displaystyle S_{2}$ is bounded in $\displaystyle W_{K}^{q(y), p(y,z)}(\mathcal{U}).$  Next, we show that 
$\displaystyle S_{2}$ is continuous. Let $ w_{n} \in W_{K}^{q(y), p(y,z)}(\mathcal{U}) $ such that $\displaystyle w_{n} \rightarrow w $ in $\displaystyle W_{K}^{q(y), p(y,z)}(\mathcal{U}).$ 
Then $\displaystyle w_{n} \rightarrow w $ in $\displaystyle L^{q(y)}(\mathcal{U}).$  Hence there exists a subsequence, still denoted by $\displaystyle w_{n},$ and a measurable function $\displaystyle g$ in $\displaystyle L^{q(y)}(\mathcal{U})$ such that $\displaystyle w_{n}(y) \rightarrow w$ and $\displaystyle |w_{n}(y)|\leq  g(y),$ \ a.e in $\mathcal{U}.$ 

Since $f$ is a Carathéodory function, we have 
 \begin{equation}\label{k8}
f(y, w_{n}) \rightarrow  f(y, w(y)) \, \text    {   a.e.  in   }\,  \mathcal{U}.
\end{equation}

According to condition  $ (\mathcal{B}_{1})$, we have that 
\begin{align*}
|f(y, w_{n}(y)|\leq  \alpha (1+ g(y))^{q(y)-1}\in L ^{q^{'}(y)}(\mathcal{U}).
\end{align*}
Using \eqref{k8}, we obtain
$$\int_{\mathcal{M}} |f(y, w_{n}(y))-f(y, w(y))|^{q^{'}(y)} d v_{{g}}(y) \rightarrow 0   \ \text { as }n\rightarrow \infty.$$
The Dominated Convergence Theorem implies that $S_{2} w_{n} \rightarrow  S_{2} w  \text { in }  L^{q^{'}(y)}(\mathcal{U}),$
so $\displaystyle S_{2} $ is continuous in $\displaystyle W_{K}^{q(y), p(y,z)}(\mathcal{U}).$ Because the canonical embedding  $\displaystyle i:W_{K}^{q(y), p(y,z)}(\mathcal{U}) \hookrightarrow L ^{q(y)}(\mathcal{U})$ is compact, its adjoint operator $\displaystyle i^{*}:L ^{q^{'}(y)}(\mathcal{U}) \rightarrow  (W_{K}^{q(y), p(y,z)}(\mathcal{U}))^{*}$ is also compact. As a result,  compositions $\displaystyle i^{*}\circ S_{2}$  and $\displaystyle S_{2} \circ  i^{*}$ are compact, so we come to the conclusion that the operator $\displaystyle S$ is compact and this completes the proof.
\end{proof} 

\hfill

\subsection*{Acknowledgements}
 The authors thank the referees for comments and suggestions.
Repov\v{s} was supported by the Slovenian Research and Innovation Agency grants P1-0292, J1-4031, J1-4001, N1-0278, N1-0114, and N1-0083.

\vfill \eject

 \bibliographystyle{spmpsci}
 \bibliography{myBibLib}

\begin{thebibliography}{9}
\bibitem{bennouna}
Aberqi, A., Bennouna, J., Benslimane, O., Ragusa, M.A.: Existence results
for double phase problem in Sobolev-Orlicz spaces with variable exponents in
complete manifold. Mediterranean Journal of Mathematics \textbf{158}(4), 1–19 (2022)
\bibitem{aberqi}
Aberqi, A., Benslimane, O., Ouaziz, A., Repov\v{s}, D.D.: On a new fractional
Sobolev space with variable exponent on complete manifolds. Boundary Value
Problems \textbf{7}(1), 1–20 (2022)
\bibitem{adams}
Adams, R.A., Fournier, J.J.: Sobolev Spaces. Elsevier, Amsterdam (2003)
\bibitem{applebaum}
Applebaum, D.: Lévy processes - from probability to finance and quantum
groups. Notices of the American Mathematical Society \textbf{51}(11), 1336–1347
(2004)
\bibitem{aubin}
Aubin, T.: Nonlinear Analysis on Manifolds. Monge-Ampere Equations, vol.
252. Springer Science and Business Media, Berlin (2012)
\bibitem{ayazoglu}
Ayazoglu, R., Sara{\c{c}}, Y., Ye{\c{s}}im, S.S., Alisoy, G.:
Existence and multiplicity of solutions for a {S}chrödinger–Kirchhoff type equation involving the fractional $p(\cdot,\cdot)$-Laplacian operator in $\mathbb{ R}^{N}.$ Collectanea Mathematica \textbf{72}, 129–156 (2021) 
\bibitem{bae}
Bae, J.H., Kim, Y.H.: Critical points theorems via the generalized Ekeland
variational principle and its application to equations of $p(x)$-Laplace type in
$\mathbb{ R}^{N}.$ Taiwanese Journal of Mathematics \textbf{23}(1), 193–229 (2019)
\bibitem{bahroun}
Bahrouni, A.: Comparison and sub-supersolution principles for the fractional
$p(x)$-Laplacian. Journal of Mathematical Analysis and Applications \textbf{458}(2),
1363–1372 (2018)
\bibitem{bahrouni}
Bahrouni, A., R\u{a}dulescu, V.D.: On a new fractional Sobolev space and applications to nonlocal variational problems with variable exponent. Discrete and
Continuous Dynamical Systems Ser. S \textbf{11}(3), 379–389 (2018)
\bibitem{benslimane}
Benslimane, O., Aberqi, A., Bennouna, J.: Existence and uniqueness of weak
solution of $ p(x)$-Laplacian in Sobolev spaces with variable exponents in complete manifolds. Filomat \textbf{35}(5), 1453–1463 (2021)
\bibitem{berkovits}
 Berkovits, J.: Extension of the Leray-Schauder degree for abstract Hammerstein type mappings. Journal of Differential Equations \textbf{234}(1), 289–310 (2007)
\bibitem{biswas}
 Biswas, R., Bahrouni, A., Fiscella, A.: Fractional double phase Robin problem involving variable-order exponents and logarithm-type nonlinearity. Mathematical Methods in the Applied Sciences \textbf{45}(17), 11272–11296 (2022)
\bibitem{brezis}
 Br{\'e}zis, H.: Functional Analysis, Sobolev Spaces and Partial Differential Equations, vol. 2. Springer, Berlin (2011)
\bibitem{brouwer}
 Brouwer, L.E.J.: Über abbildungen von mannigfaltigkeiten. Mathematische 
Annalen \textbf{71}, 97–115 (1912)
\bibitem{caffarelli}
 Caffarelli, L.A., Golse, F., Guo, Y., Kenig, C.E., Vasseur, A.: Nonlinear Partial
Differential Equations. Springer, Berlin (2012)
\bibitem{carb19}
 Carbotti, A., Dipierro, S., Valdinoci, E.: Local Density of Solutions to Fractional Equations, vol. 74. Walter de Gruyter, Berlin (2019)
\bibitem{chang}
 Chang, S.Y.A., Del Mar Gonzàlez, M.: Fractional Laplacian in conformal geometry. Advances in Mathematics \textbf{226}(2), 1410–1432 (2011)
\bibitem{chen}
 Chen, Y., Levine, S., Rao, M.: Variable exponent, linear growth functionals
in image restoration. SIAM Journal of Applied Mathematics \textbf{66}(4), 1383–1406
(2006)
\bibitem{choi}
 Choi, E., Kim, J., Kim, Y.: Infinitely many solutions for nonlinear elliptic equations of 
 $p(x)$-Laplace type without the Ambrosetti and Rabinowitz condition.
Proceedings of the Royal Society of Edinburgh \textbf{148}(1), 1–31 (2018)
\bibitem{diening} 
Diening, L., Harjulehto, P., H{\"a}st{\"o}, P., Ru{\v {z}}ick{\v {a}}, M.: Lebesgue and Sobolev
Spaces with Variable Exponents. Springer, Berlin (2011)
\bibitem{gorka}
 Gaczkowski, M., G{\'o}rka, P.: Sobolev spaces with variable exponents on Riemannian manifolds. Nonlinear Analysis \textbf{92}(4), 47–59 (2013)
\bibitem{gaczkowski}
 Gaczkowski, M., G{\'o}rka, P., Pons, D.J.: Sobolev spaces with variable exponents on complete manifolds. Journal of Functional Analysis \textbf{270}(4), 1379–1415
(2016)
\bibitem{gilboa}
 Gilboa, G., Osher, S.: Nonlocal operators with applications to image processing.
Multiscale Modeling and Simulation \textbf{7}(3), 1005–1028 (2009)

\bibitem{guo}
 Guo, L., Zhang, B., Zhang, Y.: Fractional p-Laplacian equations on Riemannian
manifolds. Electronic Journal of Differential Equations \textbf{2018}(156), 1–17 (2018)
\bibitem{hebey}
 Hebey, E.: Nonlinear Analysis on Manifolds: Sobolev Spaces and Inequalities,
vol. 5. Courant Institute of Mathematical Sciences, New York (2000)
\bibitem{ho}
 Ho, K., Kim, Y.H.: A-priori bounds and multiplicity of solutions for nonlinear
elliptic problems involving the fractional $p(.)$-Laplacian. Nonlinear Analysis
\textbf{188}, 179–201 (2019)
\bibitem{kaufmann}
Kaufmann, U., Rossi, J.D., Vidal, R.E.: Fractional Sobolev spaces with variable
exponents and fractional $p(x)$-Laplacians. Electronic Journal of Qualitative
Theory of Differential Equations \textbf{2017}, 1–10 (2017)
\bibitem{kim}
Kim, I.S., Hong, S.J.: A topological degree for operators of generalized $(S_{+})$
type. Fixed Point Theory and Applications \textbf{2015}, 1–16 (2015)
\bibitem{liu}
 Liu, H., Fu, Y.: Embedding theorems for variable exponent fractional Sobolev
spaces and an application. AIMS Mathematics \textbf{6}(9), 9835–9858 (2021)
\bibitem{palais}
 Palais, S.R.: Seminar on the Atiyah-Singer Index Theorem, vol. 57. Princeton
University Press, Princeton (1965)
\bibitem{radulescu}
R\u{a}dulescu, D.V.,   Repov\v{s}, D.D.:
   Partial Differential Equations with Variable Exponents. CRC Press, Boca Raton (2015)
\bibitem{ruzicka}
 Ru{\v {z}}ick{\v {a}}, M.: Electrorheological Fluids: Modeling and Mathematical Theory.
Springer-Verlag, Berlin (2000)
\bibitem{servadei}
 Servadei, R., Valdinoci, E.: Mountain pass solutions for non-local elliptic operators. Journal of Mathematical Analysis and Applications \textbf{389}(2), 887–898
(2012)
\bibitem{simon}
 Simon, J.: Régularité de la solution d’une équation non linéaire dans $\mathbb{ R}^{N}$ . In:
Journées d’Analyse Non Linéaire: Proceedings, Besancon, France, June 1977,
pp. 205–227. Springer, Berlin (1978)
\bibitem{trudinger}
 Trudinger, N.S.: Remarks concerning the conformal deformation of Riemannian
structures on compact manifolds. Annali della Scuola Normale Superiore di
Pisa-Classe di Scienze 22(2), 265–274 (1968)
\bibitem{zeidler}
 Zeidler, E.: Nonlinear Functional Analysis and its Applications, II/B: Nonlinear
Monotone Operators, vol. 178. Springer Science and Business Media, New York
(1990)



\end{thebibliography}

\end{document}